\numberwithin{equation}{section}
\numberwithin{figure}{section}
\theoremstyle{plain}
\newtheorem{thm}{\protect\theoremname}
\theoremstyle{plain}
\newtheorem{prop}[thm]{\protect\propositionname}
\theoremstyle{plain}
\newtheorem{lem}[thm]{\protect\lemmaname}
\theoremstyle{remark}
\newtheorem{rem}[thm]{\protect\remarkname}
\theoremstyle{plain}
\newtheorem{cor}[thm]{\protect\corollaryname}
\theoremstyle{definition}
\newtheorem{example}[thm]{\protect\examplename}
\providecommand{\corollaryname}{Corollary}
\providecommand{\examplename}{Example}
\providecommand{\lemmaname}{Lemma}
\providecommand{\propositionname}{Proposition}
\providecommand{\remarkname}{Remark}
\providecommand{\theoremname}{Theorem}
\begin{document}
\title[$X_{22}$ with infinite automorphism groups]{Another view on smooth prime Fano threefolds of degree $22$ with
infinite automorphism groups}
\author{Adrien Dubouloz}
\address{Laboratoire de Mathématique et Applications, UMR 7348 CNRS, Université
de Poitiers, 86000 Poitiers, France. \vspace{-1em}}
\address{Université Bourgogne Europe, CNRS, IMB UMR 5584, 21000 Dijon, France}
\email{adrien.dubouloz@math.cnrs.fr}
\author{Kento Fujita}
\address{Department of Mathematics, Graduate School of Science, Osaka University,
Toyonaka, Osaka 560-0043, Japan}
\email{fujita@math.sci.osaka-u.ac.jp}
\author{Takashi Kishimoto}
\address{Department of Mathematics, Faculty of Science, Saitama University,
Saitama 338-8570, Japan}
\email{tkishimo@rimath.saitama-u.ac.jp}
\dedicatory{Dedicated to Yuri Prokhorov}
\begin{abstract}
We give a self-contained alternative proof of the classification of
smooth prime Fano threefolds of degree $22$ with infinite automorphism
groups established by Kuznetsov, Prokhorov and Shramov. 
\end{abstract}

\maketitle
\vspace{-0.8cm}

\section*{Introduction}

Smooth prime Fano threefolds $X$ of degree $(-K_{X})^{3}=22$ over
an algebraically closed field $k$ of characteristic zero form a family
${\mathscr{M}}_{22}$ of dimension 6 whose members are rational \cite{Mu04}.
Our main interest lies in the subfamily ${\mathscr{M}}_{22}^{\circ}$
consisting of threefolds with infinite automorphism groups, whose
classification was established by Kuznetsov, Prokhorov and Shramov
in \cite{KP18,KPS18} in a form which can be summarized as follows
(see Theorem \ref{prop:Second-Isomorphism-Classification}):
\begin{thm}
\label{thm:KPS} A smooth prime Fano threefold of degree $22$ belonging
to ${\mathscr{M}}_{22}^{\circ}$ is isomorphic to one of the following
threefolds:

$\bullet$ A threefold $X_{22}^{{\rm MU}}\in{\mathscr{M}}_{22}^{\circ}$
with ${\rm Aut}^{0}(X_{22}^{{\rm MU}})={\rm Aut}(X_{22}^{{\rm MU}})\cong{\rm PGL}_{2}$
(the Mukai-Umemura threefold),

$\bullet$ A threefold $X_{22}^{a}\in{\mathscr{M}}_{22}^{\circ}$
with ${\rm Aut}^{0}(X_{22}^{a})\cong\mathbb{G}_{a}$. Moreover, ${\rm Aut}(X_{22}^{a})$
is isomorphic to a subgroup $\mathbb{G}_{a}\rtimes\mu_{4}$ of a Borel
subgroup $\mathbb{G}_{a}\rtimes\mathbb{G}_{m}$ of $\mathrm{PGL}_{2}$. 

$\bullet$ A member of a one-parameter family of pairwise non isomorphic
threefold $X_{22}^{m}(v)\in{\mathscr{M}}_{22}^{\circ}$ with ${\rm Aut}^{0}(X_{22}^{m}(v))\cong\mathbb{G}_{m}$
parametrized by $v\in{\mathbb{P}}^{1}\backslash\{0,1,\infty,-4\}$.
Moreover, for every $v\in{\mathbb{P}}^{1}\backslash\{0,1,\infty,-4\}$,
${\rm Aut}(X_{22}^{m}(v))$ is isomorphic to the normalizer $\mathbb{G}_{m}\rtimes\mu_{2}$
of a maximal torus $\mathbb{G}_{m}$ of $\mathrm{PGL}_{2}$ .
\end{thm}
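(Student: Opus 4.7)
The plan is to classify $G := \mathrm{Aut}^0(X)$ by type for $X \in \mathscr{M}_{22}^\circ$ and then realize each resulting threefold via an equivariant birational model. Since $X$ has Picard number one and is a smooth prime Fano threefold, $G$ is a connected linear algebraic group of positive dimension. Dimension and rank constraints, together with the linearization of $G$ on the anticanonical system and the interpretation of $X$ through its Mukai rank-three bundle, bound $\dim G \le 3$ and single out exactly three possibilities: $G \cong \mathrm{PGL}_2$, $\mathbb{G}_m$, or $\mathbb{G}_a$.

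I would treat the $\mathrm{PGL}_2$ case first. An $\mathrm{SL}_2$-equivariant decomposition of $H^0(X,-K_X)$ is heavily constrained by $(-K_X)^3 = 22$, by the Hilbert polynomial of the Mukai bundle, and by the fact that $X$ embeds $\mathrm{SL}_2$-equivariantly into a Grassmannian; matching these constraints against the representation theory of $\mathrm{SL}_2$ forces the decomposition to be the classical Mukai--Umemura one, recovering $X_{22}^{\mathrm{MU}}$ uniquely up to isomorphism. For $G \cong \mathbb{G}_m$ or $\mathbb{G}_a$, the strategy is to produce an equivariant Sarkisov-type link from $X$ centered at a $G$-invariant line or conic, transforming $X$ into a Mori fiber space on which the induced $G$-action is transparent (a rational surface bundle, a weighted projective toric model, or a smaller Fano threefold). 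Classifying $G$-equivariant structures on the link-target and then inverting the link produces $X_{22}^a$ in the $\mathbb{G}_a$-case (rigid, unique up to isomorphism) and a one-parameter family $X_{22}^m(v)$ in the $\mathbb{G}_m$-case.

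The main obstacle will be the modular parametrization in the $\mathbb{G}_m$-case: showing that the invariant $v$ is a complete isomorphism invariant of $X$, arising geometrically as a cross-ratio of four marked objects (for instance, four isolated $\mathbb{G}_m$-fixed loci or four distinguished orbits on an invariant curve in the link-target), and identifying the excluded boundary values $\{0,1,\infty,-4\}$ as precisely those cross-ratios at which the family either degenerates to $X_{22}^a$, jumps up to $X_{22}^{\mathrm{MU}}$, or leaves the smooth prime Fano class of degree $22$. Once this modulus is in place, the component groups $\mathrm{Aut}(X)/\mathrm{Aut}^0(X)$ are read off as the stabilizer of the cross-ratio data inside the normalizer of $G$ in the ambient $\mathrm{PGL}_2$ acting on the link-target: a $\mu_4$ sitting inside the Borel $\mathbb{G}_a \rtimes \mathbb{G}_m$ for $X_{22}^a$, and the Weyl group $\mu_2$ inside $\mathbb{G}_m \rtimes \mu_2$ for $X_{22}^m(v)$, yielding the stated automorphism groups.
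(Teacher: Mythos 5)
Your outline follows the same broad philosophy as the paper (equivariant birational links out of $X$), but two of its load-bearing steps are asserted rather than argued, and one of them fails as stated. First, the opening trichotomy $\mathrm{Aut}^0(X)\in\{\mathrm{PGL}_2,\mathbb{G}_m,\mathbb{G}_a\}$ does not follow from ``dimension and rank constraints'' on the anticanonical representation; in particular nothing in your plan rules out $\mathrm{Aut}^0(X)\cong\mathbb{G}_a\rtimes\mathbb{G}_m$ a priori. The paper obtains the constraint differently: any nontrivial connected solvable subgroup fixes a point of the projective, one-dimensional Hilbert scheme of lines, hence stabilizes a line $Z$, and the double projection $\psi_Z\colon X\dashrightarrow V_5$ identifies $\mathrm{Aut}(X,Z)$ with the stabilizer $\mathrm{Aut}(V_5,\Gamma)$ of a rational normal quintic $\Gamma\subset V_5$, a subgroup of $\mathrm{Aut}(V_5)\cong\mathrm{PGL}_2$; the possible groups are then read off from a classification of such curves with infinite stabilizer (Theorem \ref{thm:Quintics-Main-Theorem}). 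The exclusion of $\mathbb{G}_a\rtimes\mathbb{G}_m$ as a full neutral component is only achieved at the very end, after the extra involution is constructed, so your plan cannot begin with the trichotomy.

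Second, and more seriously, your mechanism for the component group in the $\mathbb{G}_m$-case --- reading it off as ``the stabilizer of the cross-ratio data inside the normalizer of $G$ in the ambient $\mathrm{PGL}_2$ acting on the link-target'' --- does not work, because the normalizer $\mathbb{G}_m\rtimes\mu_2\subset\mathrm{Aut}(V_5)$ does \emph{not} stabilize the quintic $\Gamma^m(v)$: by Proposition \ref{prop:T-stable-quintics} its stabilizer is exactly the torus. The sought involution of $X_{22}^m(v)$ is therefore invisible on the target of the double projection and cannot be read off there. The same issue undermines your claim that $v$ is a complete isomorphism invariant: an abstract isomorphism $X_{22}^m(v)\cong X_{22}^m(v')$ may exchange the two lines of special type, and one must check that projecting from either line yields the same parameter, which is precisely what the existence of an involution exchanging them guarantees. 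This is what the paper identifies as the main difficulty; it resolves it by composing the double projection with a further projection to a quadric threefold $Q$ and conjugating an explicit birational involution of $Q$ semi-commuting with the torus (Lemma \ref{prop:The-crucial-involution}) back to a biregular involution of $X$. Some such construction (or an external input as in the earlier literature) is indispensable and is missing from your plan.
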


The possibilities for $X$ were initially found by Prokhorov in \cite{Pr90}
and the articles \cite{KPS18} and \cite{KP18} completed later on
the structures of the automorphism groups ${\rm Aut}(X_{22}^{a})$
and ${\rm Aut}(X_{22}^{m}(v))$. Our purpose is to give a self-contained
alternative proof of Theorem \ref{thm:KPS} and a complementary view
on certain of the constructions and results in these articles. 

\medskip

Our approach to the classification of isomorphism types of prime Fano
threefolds of degree $22$ with infinite automorphism follows the
same initial path as in \cite{KP18,KPS18,Pr90}. It builds essentially
on Iskovskikh's \emph{double projection from lines} \cite{Isk89}
which are birational maps relating the geometry of $X$ to that of
the smooth quintic del Pezzo threefold $V_{5}$ in $\mathbb{P}^{6}$.
The principle stems from the observation that such a double projection
$\psi_{Z}:X\dashrightarrow V_{5}$ from a line $Z$ in $X$ induces
a correspondence between subgroups of the stabilizer $\mathrm{Aut}(X,Z)$
of $Z$ which stabilize the base locus of $\psi_{Z}$ and subgroups
of the stabilizer $\mathrm{Aut}(V_{5},\Gamma)$ of a rational normal
quintic curve $\Gamma\subset V_{5}$ contained in the base locus of
$\psi_{Z}^{-1}$ which stabilize the base locus of $\psi_{Z}^{-1}$
. This leads to determine equivalence classes of rational normal quintic
curves in $V_{5}$ with infinite stabilizers under the action of $\mathrm{Aut}(V_{5})$.
In \cite{KPS18}, this is done by exhibiting representatives of these
classes in the form of orbit closures of certain polynomials in Mukai-Umemura's
description \cite{MuUm83} of $V_{5}$ as an orbit closure of the
action of $\mathrm{PGL}_{2}$ on the projective space of homogeneous
polynomials of degree $6$ in two variables. We propose a complementary
approach building on the study of the equivariant geometry of inverse
images of rational normal quintic curves in $V_{5}$ in the universal
family on the Hilbert scheme of lines on $V_{5}$ constructed by Furushima-Nakayama
\cite{FuNa89}. 

To determine the exact automorphism group for each isomorphism type,
the main difficulty essentially reduces to establish the existence
of a special involution of $X$ generating the displayed subgroup
$\mu_{2}$ in the case $X=X_{22}^{m}(v)$. In \cite{KPS18}, the existence
is taken for granted by an external result \cite[Proposition 5.1]{DKK}
which depends itself on Mukai's theory of varieties of sums of powers.
Later, \cite{KP18} provided a different argument for the existence
which depends in particular on the fact established in \cite{KPS18}
that the Hilbert schemes of conics in $X$ is isomorphic to $\mathbb{P}^{2}$.
In contrast, we produce such a special involution in a geometric way,
as a conjugate by an appropriate $\mathbb{G}_{m}$-equivariant birational
map of a known involution on certain quadric threefolds with $\mathbb{G}_{m}$-actions
normalizing the $\mathbb{G}_{m}$-action, see the proof of Theorem
\ref{prop:Second-Isomorphism-Classification} and Lemma \ref{prop:The-crucial-involution}
as well as Remark \ref{rem:Link-remark} for these constructions. 

\medskip

We henceforth work over a fixed algebraically closed base field $k$
of characteristic zero. The scheme of the article is the following.
In Section \ref{sec:V5-Stuff}, we first review basic properties of
the construction of the Hilbert scheme of lines on $V_{5}$ and then
proceed to the classification of rational normal quintic curves $\Gamma\subset V_{5}$
with infinite stabilizers up to the action of the automorphism group
of $V_{5}$. In Section \ref{sec:X_22-stuff}, we begin with a review
of classical properties of the double projection from a line in a
smooth prime Fano threefold $X$ of degree $22$ and of the correspondence
it induces between certain families of lines in $X$ and $V_{5}$
and then re-derive Theorem \ref{thm:KPS} from these results.\\

\textit{Acknowledgements.} We would like to express our sincere gratitude
to Sasha Kuznetsov for his thorough readings of successive drafts
of this article and his numerous constructive comments and suggestions,
which contributed significantly to the clarity and mathematical accuracy
of the results. 

The present research was initiated during visits of the first and
the second authors at Saitama University and continued during a one-month
visit of the second and third authors at the University of Poitiers
during fall 2024 funded by the University of Poitiers . The authors
are grateful to these institutions for their generous support and
the excellent working conditions offered. The second author was supported
by JSPS KAKENHI Grant Number 22K03269, Royal Society International
Collaboration Award ICA\textbackslash 1\textbackslash 23109 and
Asian Young Scientist Fellowship. The third author was partially funded
by JSPS KAKENHI Grant Number 23K03047. 

\section{\protect\label{sec:V5-Stuff}Rational quintic curves with infinite
stabilizers on the smooth quintic del Pezzo threefold}

A \emph{quintic del Pezzo threefold} is a smooth projective threefold
$V_{5}$ whose Picard group is isomorphic to $\mathbb{Z}$, generated
by a very ample invertible sheaf $\mathcal{L}$ such that $\omega_{V_{5}}^{\vee}\cong\mathcal{L}^{\otimes2}$
and $\deg c_{1}(\mathcal{L})^{3}=5$, where $\omega_{V_{5}}=\mathcal{O}_{V_{5}}(K_{V_{5}})$
denotes the canonical sheaf of $V_{5}$. The global sections of $\mathcal{L}$
determine a closed embedding $\varphi_{|\mathcal{L}|}:V_{5}\hookrightarrow\mathbb{P}(H^{0}(V_{5},\mathcal{L}))\cong\mathbb{P}^{6}$
of $V_{5}$ as a codimension $3$ subvariety of $\mathbb{P}^{6}$
isomorphic to a section of the Grassmannian $\mathbb{G}(5,2)\subset\mathbb{P}^{9}$
by a linear subspace of codimension $3$. Conversely, every smooth
codimension $3$ linear section of $\mathbb{G}(5,2)$ is a quintic
del Pezzo threefold, and all of these are isomorphic, see e.g. \cite{Fuj81}.
We henceforth denote by $V_{5}$ any smooth quintic del Pezzo threefold
and consider it as a subvariety of $\mathbb{P}^{6}$ via its half-anti-canonical
embedding above. A subvariety $Z$ of $V_{5}$ of degree $d$ is then
by definition a subvariety $Z\subset\mathbb{P}^{6}$ of degree $d$
which is contained in $V_{5}$. 

\subsection{\protect\label{subsec:Automorphisms-and-induced-action-on-lines-V5}
Basic facts on lines and rational normal quintic curves }

\subsubsection{\protect\label{subsec:Lines-V5}Equivariant structure of the Hilbert
scheme of lines }

A line on $V_{5}$ is a usual line $\ell\cong\mathbb{P}^{1}$ in $\mathbb{P}^{6}$
which is contained in $V_{5}$, equivalently, an integral curve $\ell\subset V_{5}$
of anti-canonical degree $-K_{V_{5}}\cdot\ell=\deg(c_{1}(\omega_{V_{5}}^{\vee}|_{\ell}))=2$.
By \cite[Proposition 5.1]{Isk77}, the conormal sheaf of $\ell$ in
$V_{5}$ is either trivial, in which case $\ell$ is said to be a
line of general type, or isomorphic to $\mathcal{O}_{\ell}(-1)\oplus\mathcal{O}_{\ell}(1)$,
in which case $\ell$ is said to be a line of special type. We now
recall from \cite{FuNa89} some properties of the Hilbert scheme of
lines on $V_{5}$ and of its universal family, see also \cite{Sa17}
for another view on this matter. 

\medskip

Let $\mathfrak{C}\cong\mathbb{P}^{1}$ be a smooth proper $k$-rational
curve, let $\omega_{\mathfrak{C}}^{\vee}\cong\mathcal{O}_{\mathfrak{C}}(2)$
be endowed with its canonical $\mathrm{Aut}(\mathfrak{C})$-linearization
and let $V=H^{0}(\mathfrak{C},\omega_{\mathfrak{C}}^{\vee})$ be endowed
with the corresponding structure of $\mathrm{Aut}(\mathfrak{C})$-module.
The anti-canonical embedding $\mathfrak{C}\to\mathfrak{H}:=\mathbb{P}(V)\cong\mathbb{P}^{2}$
is then $\mathrm{Aut}(\mathfrak{C})$-equivariant, and, identifying
$\mathfrak{C}$ with its image, which is a smooth conic, the restriction
homomorphism $\mathrm{Aut}(\mathfrak{H},\mathfrak{C})\to\mathrm{Aut}(\mathfrak{C})$
is an isomorphism. Let $\mathfrak{R}:=\mathfrak{C}\times\mathfrak{C}$
be endowed with the diagonal action of $\mathrm{Aut}(\mathfrak{C})$
so that the two projections $\mathrm{p}_{i}:\mathfrak{R}\to\mathfrak{C}$,
$i=1,2$, are $\mathrm{Aut}(\mathfrak{C})$-equivariant and let $\Delta:\mathfrak{C}\to\mathfrak{R}$
be the diagonal closed immersion. The irreducible sub-$\mathrm{Aut}(\mathfrak{C})$-module
$V$ of the $\mathrm{Aut}(\mathfrak{C})$-module $H^{0}(\mathfrak{R},\mathcal{O}_{\mathfrak{R}}(\Delta(\mathfrak{C})))\cong V\oplus k$
determines an $\mathrm{Aut}(\mathfrak{C})$-equivariant double cover
$\upsilon:\mathfrak{R}\to\mathfrak{H}$ ramified along $\Delta(\mathfrak{C})$
and branched along $\mathfrak{C}$. The sheaf $\mathcal{E}:=\upsilon_{*}(\mathrm{p}_{1}^{*}(\omega_{\mathfrak{C}}^{\vee})^{\otimes2})$
on $\mathfrak{H}$ is then an $\mathrm{Aut}(\mathfrak{C})$-linearized
locally free sheaf of rank $2$ (cf. \cite[Theorem 5]{Sch61}), with
associated $\mathrm{Aut}(\mathfrak{C})$-equivariant projective bundle
$\pi:\mathfrak{U}:=\mathbb{P}(\mathcal{E})\to\mathfrak{H}$. Taking
cohomology of the exact sequence of $\mathrm{Aut}(\mathfrak{C})$-linearized
sheaves 
\[
0\to\mathcal{O}_{\mathfrak{R}}\to\mathcal{O}_{\mathfrak{R}}(\Delta(\mathfrak{C}))\to\Delta_{*}\omega_{\mathfrak{C}}^{\vee}\to0
\]
 tensored by $\mathrm{p}_{1}^{*}(\omega_{\mathfrak{C}}^{\vee})^{\otimes2}$
gives an isomorphism of $\mathrm{Aut}(\mathfrak{C})$-modules 
\[
H^{0}(\mathfrak{R},\mathrm{p}_{1}^{*}(\omega_{\mathfrak{C}}^{\vee})^{\otimes2}\otimes\mathcal{O}_{\mathfrak{R}}(\Delta(\mathfrak{C})))\cong H^{0}(\mathfrak{C},(\omega_{\mathfrak{C}}^{\vee})^{\otimes3})\oplus H^{0}(\mathfrak{C},(\omega_{\mathfrak{C}}^{\vee})^{\otimes2}).
\]
Under the isomorphism of $\mathrm{Aut}(\mathfrak{C})$-modules 
\[
H^{0}(\mathfrak{H},\mathcal{E}\otimes\mathcal{O}_{\mathcal{\mathfrak{H}}}(1))\cong H^{0}(\mathfrak{R},\mathrm{p}_{1}^{*}(\omega_{\mathfrak{C}}^{\vee})^{\otimes2}\otimes\upsilon^{*}\mathcal{O}_{\mathcal{\mathfrak{H}}}(1))\cong H^{0}(\mathfrak{R},\mathrm{p}_{1}^{*}(\omega_{\mathfrak{C}}^{\vee})^{\otimes2}\otimes\mathcal{O}_{\mathfrak{R}}(\Delta(\mathfrak{C}))),
\]
the irreducible sub-$\mathrm{Aut}(\mathfrak{C})$-module $W:=H^{0}(\mathfrak{C},(\omega_{\mathfrak{C}}^{\vee})^{\otimes3})$
determines an $\mathrm{Aut}(\mathfrak{C})$-equivariant rational map
\[
\Psi:\mathfrak{U}\dashrightarrow\mathbb{P}(W)\cong\mathbb{P}^{6}.
\]
By \cite[Lemma 2.2 and Lemma 2.3 (1)]{FuNa89}, $\Psi$ is a morphism
with image isomorphic to $V_{5}$ and by \cite[Theorem I]{FuNa89},
the induced morphism $\psi:\mathfrak{U}\to V_{5}$ identifies $\mathfrak{H}$
with the Hilbert scheme of lines of $V_{5}$ and the projective bundle
$\text{\ensuremath{\pi}}:\mathfrak{U}\to\mathfrak{H}$ with its universal
family. The following proposition collects additional basic properties
of the construction. 
\begin{prop}
\label{prop:Hilb-Evaluation-Morphism}With the notation above, the
following hold:

a) The canonical surjection $\upsilon^{*}\mathcal{E}\to\mathrm{p}_{1}^{*}(\omega_{\mathfrak{C}}^{\vee})^{\otimes2}$
induces an $\mathrm{Aut}(\mathfrak{C})$-equivariant closed immersion
$j:\mathfrak{R}\hookrightarrow\mathfrak{U}$ of $\mathfrak{H}$-schemes
for which the composition $\Psi_{\mathfrak{R}}:=\Psi\circ j:\mathfrak{R}\to\mathbb{P}(W)$
is an injective morphism with image equal to an anti-canonical divisor
$\mathcal{S}$ of $V_{5}$ singular along the the rational normal
sextic curve $\mathcal{C}=\Psi_{\mathfrak{R}}(\Delta(\mathfrak{C}))$.
Moreover, the induced morphism $\psi_{\mathfrak{R}}:\mathfrak{R}\to\mathcal{S}$
is the normalization of $\mathcal{S}$. 

b) The image by $\Psi_{\mathfrak{R}}$ of a fiber $f_{i}$ of $\mathrm{p}_{i}:\mathfrak{R}\to\mathfrak{C}$
is a line of special type of $V_{5}$ for $i=1$ and a rational normal
quintic curve for $i=2$.

c) The restriction $\mathcal{E}|_{\mathfrak{C}}$ is isomorphic to
$\mathcal{O}_{\mathfrak{C}}(3)\oplus\mathcal{O}_{\mathfrak{C}}(3)$
and the restriction $\Psi:\mathfrak{U}|_{\mathfrak{C}}\to\mathbb{P}(W)$
is injective, with image $\mathcal{S}$. A line $\ell\subset V_{5}$
is of special type if and only if it equals the image by $\Psi$ of
a fiber of $\pi:\mathfrak{U}|_{\mathfrak{C}}\to\mathfrak{C}$.

d) The morphism $\psi:\mathfrak{U}\to V_{5}$ is finite of degree
$3$, its restriction over $V_{5}\setminus\mathcal{S}$ is étale and
$\psi^{*}\mathcal{S}=2\mathfrak{U}|_{\mathfrak{C}}+j(\mathfrak{R})$.
Moreover, $\mathfrak{U}|_{\mathfrak{C}}\cap j(\mathfrak{R})=2j(\Delta(\mathfrak{C}))$
scheme-theoretically and $j(\Delta(\mathfrak{C}))\subset\mathcal{\mathfrak{U}}|_{\mathfrak{C}}$
is an $\mathrm{Aut}(\mathfrak{C})$-stable section of $\pi:\mathfrak{U}|_{\mathfrak{C}}\to\mathfrak{C}$. 
\end{prop}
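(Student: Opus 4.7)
The plan is to derive the four parts from the adjunction construction of $j$, a base-change computation of $\mathcal{E}|_{\mathfrak{C}}$, and intersection-theoretic bookkeeping on $\mathfrak{U}$.

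For~(a) and~(b), I recognize the canonical surjection as the counit of the adjunction $\upsilon^{*}\dashv\upsilon_{*}$ applied to $\mathrm{p}_{1}^{*}(\omega_{\mathfrak{C}}^{\vee})^{\otimes2}$. This yields a section $\tilde{j}:\mathfrak{R}\hookrightarrow\mathbb{P}(\upsilon^{*}\mathcal{E})=\mathfrak{U}\times_{\mathfrak{H}}\mathfrak{R}$; composition with the first projection gives $j$, which I verify to be a closed immersion by injectivity on points (away from $\mathfrak{C}$ the two \'{e}tale preimages give distinct quotients of $\mathcal{E}|_{x}$, while over $\mathfrak{C}$ only the ramified preimage exists) together with a local check of tangent injectivity at a ramification point. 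The identifications $j^{*}\mathcal{O}_{\mathfrak{U}}(1)=\mathrm{p}_{1}^{*}(\omega_{\mathfrak{C}}^{\vee})^{\otimes2}$ and $j^{*}\pi^{*}\mathcal{O}_{\mathfrak{H}}(1)=\mathcal{O}_{\mathfrak{R}}(\Delta(\mathfrak{C}))$ then realize $\Psi_{\mathfrak{R}}$ as the morphism defined on $\mathfrak{R}\cong\mathbb{P}^{1}\times\mathbb{P}^{1}$ by the submodule $W\subset H^{0}(\mathfrak{R},\mathcal{L})$, with $\mathcal{L}:=\mathrm{p}_{1}^{*}(\omega_{\mathfrak{C}}^{\vee})^{\otimes2}(\Delta(\mathfrak{C}))$ of bidegree $(5,1)$. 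Since $\mathcal{L}^{2}=10$ equals the degree in $\mathbb{P}^{6}$ of an anticanonical divisor of $V_{5}$ and $\Psi_{\mathfrak{R}}(\mathfrak{R})\subset V_{5}$, the birational image $\mathcal{S}:=\Psi_{\mathfrak{R}}(\mathfrak{R})$ is an effective anticanonical divisor; birationality is forced by the fact that $\mathcal{L}|_{f_{2}}=\mathcal{O}_{\mathbb{P}^1}(5)$ embeds $f_{2}$ as a curve of degree~$5$. The restriction of $W$ to $\Delta(\mathfrak{C})$ recovers the full linear system on $\mathcal{L}|_{\Delta(\mathfrak{C})}=(\omega_{\mathfrak{C}}^{\vee})^{\otimes3}$, so $\Psi_{\mathfrak{R}}(\Delta(\mathfrak{C}))$ is a rational normal sextic $\mathcal{C}\subset\mathbb{P}(W)$; smoothness of $\mathfrak{R}$, finiteness and birationality of $\Psi_{\mathfrak{R}}$, together with the identification of $\mathcal{C}$ as the singular locus of $\mathcal{S}$ coming from~(c)--(d), make $\psi_{\mathfrak{R}}$ the normalization. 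For~(b), the bidegree computation $\mathcal{L}|_{f_{1}}=\mathcal{O}_{\mathbb{P}^1}(1)$ gives a line, and $\mathcal{L}|_{f_{2}}=\mathcal{O}_{\mathbb{P}^1}(5)$ with surjectivity of $W\to H^{0}(f_{2},\mathcal{O}(5))$ gives a rational normal quintic; the ``special type'' of $\Psi_{\mathfrak{R}}(f_{1})$ then follows from the criterion established in~(c).

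For~(c), flat base change along $\iota_{\mathfrak{C}}:\mathfrak{C}\hookrightarrow\mathfrak{H}$ (using $\upsilon^{*}\mathfrak{C}=2\Delta(\mathfrak{C})$ scheme-theoretically), combined with the conormal exact sequence $0\to\mathcal{I}_{\Delta}/\mathcal{I}_{\Delta}^{2}\cong\omega_{\mathfrak{C}}\to\mathcal{O}_{2\Delta(\mathfrak{C})}\to\mathcal{O}_{\Delta(\mathfrak{C})}\to0$ tensored by $\mathrm{p}_{1}^{*}(\omega_{\mathfrak{C}}^{\vee})^{\otimes2}|_{2\Delta(\mathfrak{C})}$ and pushed down along the isomorphism $\upsilon|_{\Delta(\mathfrak{C})}:\Delta(\mathfrak{C})\xrightarrow{\sim}\mathfrak{C}$, produces the short exact sequence
\[
0\longrightarrow\mathcal{O}_{\mathfrak{C}}(2)\longrightarrow\mathcal{E}|_{\mathfrak{C}}\longrightarrow\mathcal{O}_{\mathfrak{C}}(4)\longrightarrow0.
\]
Since $\mathrm{Ext}^{1}(\mathcal{O}(4),\mathcal{O}(2))\cong k$, either the extension splits as $\mathcal{O}(2)\oplus\mathcal{O}(4)$ or $\mathcal{E}|_{\mathfrak{C}}\cong\mathcal{O}_{\mathfrak{C}}(3)^{\oplus2}$; non-splitness is certified by the vanishing $H^{0}(\mathcal{E}|_{\mathfrak{C}}(-4))=0$ (which rules out a sub-line bundle of degree~$\geq4$), deduced from the restriction sequence $0\to\mathcal{E}(-4)\to\mathcal{E}(-2)\to\mathcal{E}(-2)|_{\mathfrak{C}}\to0$ on $\mathfrak{H}$ together with $H^{0}(\mathfrak{H},\mathcal{E}(-2))=H^{0}(\mathfrak{R},\mathcal{O}_{\mathfrak{R}}(2,-2))=0$. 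Consequently $\mathfrak{U}|_{\mathfrak{C}}\cong\mathfrak{C}\times\mathbb{P}^{1}$ and restricting $\mathcal{O}_{\mathfrak{U}}(1)\otimes\pi^{*}\mathcal{O}_{\mathfrak{H}}(1)$ yields the polarization of bidegree $(5,1)$; the injectivity of $\Psi|_{\mathfrak{U}|_{\mathfrak{C}}}$ and identification of its image with $\mathcal{S}$ follow by self-intersection comparison with~(a). The criterion for lines of special type reduces to computing the normal bundle of $\Psi(\pi^{-1}(c))$ for $c\in\mathfrak{C}$ in the fibration $\mathfrak{U}|_{\mathfrak{C}}\to\mathfrak{C}$, which has the prescribed type $\mathcal{O}\oplus\mathcal{O}(-1)$.

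Part~(d) is then intersection-theoretic bookkeeping on~$\mathfrak{U}$. Writing $h=c_{1}(\mathcal{O}_{\mathfrak{U}}(1))$, $f=\pi^{*}c_{1}(\mathcal{O}_{\mathfrak{H}}(1))$, and $H=c_{1}(\mathcal{O}_{\mathfrak{H}}(1))$, Grothendieck--Riemann--Roch applied to $\upsilon_{*}$ for $\mathrm{p}_{1}^{*}(\omega_{\mathfrak{C}}^{\vee})^{\otimes2}$ gives $c_{1}(\mathcal{E})=3H$ and $c_{2}(\mathcal{E})=6H^{2}$, hence $(h+f)^{3}=15$ and $\deg\psi=15/5=3$. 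The classes $[\mathfrak{U}|_{\mathfrak{C}}]=2f$ and $[j(\mathfrak{R})]=2(h-f)$ (the latter fixed by $\Psi_{\mathfrak{R}}$ being birational onto the degree-$10$ surface~$\mathcal{S}$) combine with $\psi^{*}\mathcal{S}=2(h+f)$ to yield the rational equivalence $\psi^{*}\mathcal{S}=2\mathfrak{U}|_{\mathfrak{C}}+j(\mathfrak{R})$, and \'{e}taleness over $V_{5}\setminus\mathcal{S}$ follows by purity once $\mathcal{S}$ is identified as the branch locus. Finally, $j(\Delta(\mathfrak{C}))\subset\mathfrak{U}|_{\mathfrak{C}}$ is the section cut out by the $\mathrm{Aut}(\mathfrak{C})$-equivariant quotient $\mathcal{E}|_{\mathfrak{C}}\twoheadrightarrow\mathcal{O}_{\mathfrak{C}}(4)$ from the extension in~(c), and the scheme-theoretic tangency $\mathfrak{U}|_{\mathfrak{C}}\cap j(\mathfrak{R})=2j(\Delta(\mathfrak{C}))$ is a local computation along this section. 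The hard part will be pinning down these exact multiplicities, which requires careful local analysis of $\psi$ around $j(\Delta(\mathfrak{C}))$, where the two branches of $\mathcal{S}$ (images of $j(\mathfrak{R})$ and $\mathfrak{U}|_{\mathfrak{C}}$) meet tangentially.
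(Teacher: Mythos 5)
Your route is genuinely different from the paper's, which obtains almost every assertion by citing Furushima--Nakayama (Lemmas 2.1--2.3 and Remark 2.2 of \cite{FuNa89}) and only verifies b) directly from the explicit basis (\ref{eq:basis-FN}); a self-contained derivation is a reasonable ambition, and your computations of $c_{1}(\mathcal{E})=3H$, $c_{2}(\mathcal{E})=6H^{2}$, $(h+f)^{3}=15$ and of the classes $[\mathfrak{U}|_{\mathfrak{C}}]=2f$, $[j(\mathfrak{R})]=2h-2f$ all check out. However, there is a genuine gap in your argument for c). From the sequence $0\to\mathcal{E}(-4)\to\mathcal{E}(-2)\to\mathcal{E}(-2)|_{\mathfrak{C}}\to0$ and $H^{0}(\mathcal{E}(-2))=0$ you only get an injection $H^{0}(\mathcal{E}(-2)|_{\mathfrak{C}})\hookrightarrow H^{1}(\mathcal{E}(-4))$, and the latter group does not vanish: since $\upsilon$ is finite and $\upsilon^{*}\mathcal{O}_{\mathfrak{H}}(1)\cong\mathcal{O}_{\mathfrak{R}}(1,1)$, one has $H^{1}(\mathfrak{H},\mathcal{E}(-4))\cong H^{1}(\mathfrak{R},\mathcal{O}_{\mathfrak{R}}(0,-4))\cong k^{3}$. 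So your argument cannot distinguish $\mathcal{O}(3)^{\oplus2}$ from $\mathcal{O}(2)\oplus\mathcal{O}(4)$ (in the latter case $H^{0}(\mathcal{E}(-2)|_{\mathfrak{C}})=k$ embeds into $k^{3}$ with no contradiction). You need a different certificate of non-splitting, e.g.\ Schwarzenberger's computation \cite[Proposition~8 / Theorem~5]{Sch61} as the paper uses elsewhere, or an explicit identification of the evaluation map on the second-order neighbourhood of $\Delta(\mathfrak{C})$.

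Two further points fall short of a proof. First, the birationality of $\Psi_{\mathfrak{R}}$ onto its image, which you also use to pin down $[j(\mathfrak{R})]$, is not ``forced'' by the fibers $f_{2}$ embedding: a degree-$2$ map onto a hyperplane section could a priori still embed each $f_{2}$, and ruling this out needs an actual argument (e.g.\ comparing intersection numbers of the images of two fibers $f_{2}$ inside a degree-$5$ surface). Second, in d) you only establish the \emph{rational equivalence} $2(h+f)=2\cdot2f+(2h-2f)$, whereas the statement is an equality of effective divisors; writing $\psi^{*}\mathcal{S}=a\,\mathfrak{U}|_{\mathfrak{C}}+b\,j(\mathfrak{R})+R$ with $a,b\geq1$ and $R\geq0$ and comparing classes forces $b=1$ but still leaves $a\in\{1,2\}$ with a residual fibral divisor, so the multiplicity $2$ along $\mathfrak{U}|_{\mathfrak{C}}$, the scheme-theoretic tangency $\mathfrak{U}|_{\mathfrak{C}}\cap j(\mathfrak{R})=2j(\Delta(\mathfrak{C}))$, and the identification of the branch locus all still require the local analysis you defer (your appeal to ``purity once $\mathcal{S}$ is identified as the branch locus'' is circular as phrased; a cleaner route is to compute the ramification class from $K_{\mathfrak{U}}=\psi^{*}K_{V_{5}}+R$, which gives $[R]=2f$, and use $\mathrm{Aut}(\mathfrak{C})$-equivariance to conclude $R=\mathfrak{U}|_{\mathfrak{C}}$). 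As it stands the proposal is a sound and largely verifiable outline, but parts c) and d) are not yet proved.
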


\begin{proof}
In assertion a), the existence of $j$ and the fact that the image
of $\Psi_{\mathfrak{R}}$ is an anti-canonical divisor of $V_{5}$
follow from \cite[Lemma 2.1 (3)-(4)]{FuNa89}, which identify in particular
the image of $j$ as the zero locus of an $\mathrm{Aut}(\mathfrak{C})$-invariant
global section of $\mathcal{O}_{\mathbb{P}(\mathcal{E})}(2)\otimes\pi^{*}\mathcal{O}_{\mathcal{\mathfrak{H}}}(-2)$
endowed with the product $\mathrm{Aut}(\mathfrak{C})$-linearization.
The other properties follow from \cite[Remark 2.2 (2)]{FuNa89}. Assertion
b) is for instance directly verified from the following explicit basis
of the sub-$\mathrm{Aut}(\mathfrak{C})$-module $W$ of $H^{0}(\mathfrak{R},\mathrm{p}_{1}^{*}(\omega_{\mathfrak{C}}^{\vee})^{\otimes2}\otimes\mathcal{O}_{\mathfrak{R}}(\Delta(\mathfrak{C})))\cong H^{0}(\mathfrak{R},\mathcal{O}_{\mathfrak{R}}(5f_{1}+f_{2}))$
given in \cite[$\S$ 2, p. 116]{FuNa89} 
\begin{align}
e_{0}=x_{1}^{5}x_{2},\,e_{1}=x_{1}^{4}y_{1}x_{2}+\tfrac{1}{5}x_{1}^{5}y_{2} & ,\,e_{2}=x_{1}^{3}y_{1}^{2}x_{2}+\tfrac{1}{2}x_{1}^{4}y_{1}y_{2},\,e_{3}=x_{1}^{2}y_{1}^{3}x_{2}+x_{1}^{3}y_{1}^{2}y_{2},\label{eq:basis-FN}\\
e_{4}=\tfrac{1}{2}x_{1}y_{1}^{4}x_{2}+x_{1}^{2}y_{1}^{3}y_{2},\, & e_{5}=\tfrac{1}{5}y_{1}^{5}x_{2}+x_{1}y_{1}^{4}y_{2},\,e_{6}=y_{1}^{5}y_{2}\nonumber 
\end{align}
where $[x_{1}:y_{1}]$ and $[x_{2}:y_{2}]$ are homogeneous coordinates
on the first and second factor of $\mathfrak{R}=\mathfrak{C}\times\mathfrak{C}\cong\mathbb{P}^{1}\times\mathbb{P}^{1}$
respectively. These sections $e_{i}$, $i=0,\ldots,6$, form a basis
of the weight space decomposition of $W$ with respect to the diagonal
 action $([x_{1}:y_{1}],[x_{2}:y_{2}])\mapsto([\lambda x_{1}:y_{1}],[\lambda x_{2}:y_{2}])$
of the maximal torus $\mathbb{G}_{m}=\mathrm{Spec}(k[\lambda^{\pm1}])$
of $\mathrm{Aut}(\mathfrak{C})\cong\mathrm{PGL}_{2}$ with respective
weights $w_{i}=6-i$. Assertion c) is a combination of \cite[Lemma 2.1 (2) and Lemma 2.3 (3)-(4)]{FuNa89}.
The first part of assertion d) follows from \cite[Lemma 2.3 (2)-(3)]{FuNa89}
and second part from the previously explained identification of $j(\mathfrak{R})$
to the zero locus of an $\mathrm{Aut}(\mathfrak{C})$-invariant global
section of $\mathcal{O}_{\mathbb{P}(\mathcal{E})}(2)\otimes\pi^{*}\mathcal{O}_{\mathcal{\mathfrak{H}}}(-2)$
.
\end{proof}
In geometric terms, $V_{5}\setminus\mathcal{S}$ is the locus of points
of $V_{5}$ through which there pass exactly $3$ lines, all of general
type, $\mathcal{C}$ is the locus of points of $V_{5}$ through which
there passes a unique line of $V_{5}$, of special type, $\mathcal{S}$
is the surface swept out by lines of special type on $V_{5}$ and
$\mathcal{S}\setminus\mathcal{C}$ is the locus of points through
which pass a line of special type and a line of general type. Moreover,
lines of special types are tangent lines to $\mathcal{C}$ whereas
every line $\ell$ of general type intersect $\mathcal{S}$ transversally
in two distinct points, contained in $\mathcal{S}\setminus\mathcal{C}$,
the later being the images by $\psi:\mathfrak{U}\to V_{5}$ of the
two intersection points with $j(\mathfrak{R})$ of the corresponding
fiber of $\pi:\mathfrak{U}\to\mathfrak{H}$. 

The surface $\mathcal{S}$ is also swept out by the images $\Psi_{\mathfrak{R}}(f_{2})$
of the fibers of $\mathrm{p}_{2}:\mathfrak{R}\to\mathfrak{C}$, which
are rational normal quintic curves in $\mathbb{P}(W)$. These curves
are disjoint on $V_{5}$, meeting every line of special type in pairwise
distinct points. We henceforth call them special rational normal quintic
curves of $V_{5}$. 

\medskip

Proposition \ref{prop:Hilb-Evaluation-Morphism} a) implies in particular
that the closed subschemes $\mathcal{C}$ and $\mathcal{S}$ of $V_{5}$
are invariant under the action of $\mathrm{Aut}(V_{5})$. Since $\mathcal{C}=\Psi_{\mathfrak{R}}(\Delta(\mathfrak{C}))\subset V_{5}\subset\mathbb{P}(W)$
is a rational normal sextic curve, the restriction homomorphism $\mathrm{Aut}(\mathbb{P}(W),\mathcal{C})\to\mathrm{Aut}(\mathcal{C})\cong\mathrm{Aut}(\mathfrak{C})$
is an isomorphism and it follows from \cite{MuUm83} that 
\begin{align*}
\mathrm{Aut}(\mathbb{P}(W),V_{5})= & \mathrm{Aut}(\mathbb{P}(W),(V_{5},\mathcal{S},\mathcal{C}))=\mathrm{Aut}(\mathbb{P}(W),(\mathcal{S},\mathcal{C}))=\mathrm{Aut}(\mathbb{P}(W),\mathcal{C}),
\end{align*}
and that the restriction homomorphism $\mathrm{Aut}(\mathbb{P}(W),\mathcal{C})\to\mathrm{Aut}(V_{5})$
is an isomorphism. It can be verified in turn that the action of $\mathrm{Aut}(V_{5})$
on $V_{5}$ has exactly three orbits: $\mathcal{C}$, $\mathcal{S}\setminus\mathcal{C}$
and $V_{5}\setminus\mathcal{S}$. 

\medskip

The construction provides the following commutative diagram of $\mathrm{Aut}(\mathfrak{C})$-equivariant
morphisms 

\[\xymatrix{ & \mathcal{C} \ar[r] & \mathcal{S} \ar[r] & V_5 \\ \mathfrak{C} \ar[ur]^{\cong} \ar[r]^{\Delta} \ar[d]_{\cong} & \mathfrak{R} \ar[ur]^{\psi_{\mathfrak{R}}} \ar[d]_{\upsilon} \ar[r]^{j} & \mathfrak{U} \ar[ur]^{\psi} \ar[dl]_{\pi} \\ \mathfrak{C} \ar[r] & \mathfrak{H}}\]and
two canonical identifications 
\begin{equation}
c_{\mathcal{C}}:\mathrm{Aut}(V_{5})\to\mathrm{Aut}(\mathcal{C})\quad\textrm{and}\quad c_{\mathcal{\mathfrak{C}}}:\mathrm{Aut}(V_{5})\to\mathrm{Aut}(\mathfrak{C}),\label{eq:canonical-ID-Aut(V5)}
\end{equation}
the first one given by the restriction homomorphism $\mathrm{Aut}(V_{5},\mathcal{C})\to\mathrm{Aut}(\mathcal{C})$
and the second one by the restriction homomorphism $\mathrm{Aut}(\mathfrak{H},\mathfrak{C})\to\mathrm{Aut}(\mathfrak{C})$
when viewing $\mathrm{Aut}(V_{5})$ as the subgroup $\mathrm{Aut}(\mathfrak{H},\mathfrak{C})$
of $\mathrm{Aut}(\mathcal{\mathfrak{H}})$. 

\medskip

Recall that a Borel subgroup of an affine algebraic group is a maximal
connected solvable subgroup. We will make frequent use of the fact
that a nontrivial proper connected subgroup of $\mathrm{Aut}(V_{5})\cong\mathrm{Aut}(\mathfrak{C})\cong\mathrm{PGL}_{2}$
is either a Borel subgroup $B\cong\mathbb{G}_{a}\rtimes\mathbb{G}_{m}$
or its unipotent radical isomorphic to $\mathbb{G}_{a}$, or a maximal
torus $T$ isomorphic to $\mathbb{G}_{m}$ and that all these groups
are in particular solvable. We record the following well-known consequence
of the above properties: 
\begin{lem}
\label{lem:Connected-Group-invariant-lines-V5}For a non-trivial proper
connected subgroup $G$ of $\mathrm{Aut}(V_{5})$, the following hold:

1) If $G$ is a torus then $V_{5}$ contains three $G$-stable lines:
two of special type and one of general type.

2) Otherwise, $V_{5}$ contains a unique $G$-stable line, which is
of special type. 
\end{lem}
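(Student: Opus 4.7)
The plan is to translate the question into a fixed-point analysis on the Hilbert scheme $\mathfrak{H}$ of lines on $V_{5}$. The identification $c_{\mathfrak{C}}\colon\mathrm{Aut}(V_{5})\xrightarrow{\sim}\mathrm{Aut}(\mathfrak{H},\mathfrak{C})$ from (\ref{eq:canonical-ID-Aut(V5)}), together with the fact that $\pi\colon\mathfrak{U}\to\mathfrak{H}$ is the universal family of lines on $V_{5}$, shows that a line $\ell\subset V_{5}$ is $G$-stable if and only if its class $[\ell]\in\mathfrak{H}$ is a $G$-fixed point. By Proposition \ref{prop:Hilb-Evaluation-Morphism} c), the conic $\mathfrak{C}\subset\mathfrak{H}$ parametrizes precisely the lines of special type on $V_{5}$, so points off $\mathfrak{C}$ correspond to lines of general type.

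The induced action of $\mathrm{Aut}(\mathfrak{C})\cong\mathrm{PGL}_{2}$ on $\mathfrak{H}=\mathbb{P}(V)$ factors through the $3$-dimensional irreducible representation $V=H^{0}(\mathfrak{C},\omega_{\mathfrak{C}}^{\vee})$, which is the second symmetric power of the standard two-dimensional representation, and the anti-canonical embedding realizes $\mathfrak{C}\subset\mathbb{P}(V)$ as the Veronese conic of squares of linear forms. The task thus reduces to inspecting, for each conjugacy class of non-trivial proper connected subgroups of $\mathrm{PGL}_{2}$ (a maximal torus, its unipotent radical, or a Borel), the fixed locus on $\mathbb{P}(V)$ and its intersection with $\mathfrak{C}$.

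Fix homogeneous coordinates $[x:y]$ on $\mathfrak{C}\cong\mathbb{P}^{1}$ and take the basis $\{x^{2},xy,y^{2}\}$ of $V$. If $G=T$ is the diagonal maximal torus, these basis vectors have three distinct weights, so $T$ has exactly three fixed points $[x^{2}],[xy],[y^{2}]\in\mathbb{P}(V)$; the two squares $[x^{2}]$ and $[y^{2}]$ lie on $\mathfrak{C}$ while $[xy]$ does not, which proves statement 1). If $G\cong\mathbb{G}_{a}$ is the upper-triangular unipotent subgroup, a direct computation shows the action on $V$ is a single Jordan block and the unique $G$-invariant line is $k\cdot y^{2}$, giving one $G$-fixed point, which lies on $\mathfrak{C}$. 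Finally, if $G\cong\mathbb{G}_{a}\rtimes\mathbb{G}_{m}$ is a Borel subgroup, its fixed set is contained in the $\mathbb{G}_{a}$-fixed locus $\{[y^{2}]\}$, and this point is moreover fixed by the torus factor, so it is the unique $G$-fixed point; this proves statement 2). The only conceptual ingredient is the Hilbert-scheme reduction in the first paragraph; the rest is a routine calculation in $\mathrm{Sym}^{2}$, so I do not anticipate a genuine obstacle.
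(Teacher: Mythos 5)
Your proof is correct and follows essentially the same route as the paper: both reduce the statement to counting $G$-fixed points on the Hilbert scheme $\mathfrak{H}\cong\mathbb{P}^{2}$ relative to the conic $\mathfrak{C}$ parametrizing lines of special type. The only difference is in the final count, which you carry out by an explicit weight/Jordan decomposition of $\mathrm{Sym}^{2}$ of the standard representation, whereas the paper argues synthetically via the tangent lines to $\mathfrak{C}$ through a fixed point off the conic; both are equally valid.
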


\begin{proof}
Since $c_{\mathfrak{C}}(G)\cong G$ is a proper connected subgroup
of $\mathrm{Aut}(\mathfrak{C})$, $\mathfrak{C}$ contains two $G$-fixed
points if $G$ is a torus and a unique one otherwise. On the other
hand, every $G$-fixed point $q$ in $\mathfrak{H}\setminus\mathfrak{C}$
determines two distinct $G$-fixed points on $\mathfrak{C}$ (the
intersections of $\mathfrak{C}$ with its tangent lines passing through
$q$). The assertion then follows from the fact that a pair of distinct
$G$-fixed points in $\mathfrak{H}\setminus\mathfrak{C}$ determines
in a similar way at least three distinct $G$-fixed points on $\mathfrak{C}$. 
\end{proof}

\subsubsection{\protect\label{subsec:Normalization-Special-Hyperplane-Section}The
hyperplane section singular along a line of special type and its normalization}

By Proposition \ref{prop:Hilb-Evaluation-Morphism} b), every point
$p$ of the rational normal sextic curve $\mathcal{C}\subset V_{5}$
determines a unique special rational normal quintic $\Gamma_{p}\subset V_{5}$
and a unique line $\ell_{p}$ of special type passing through $p$,
both contained in the surface $\mathcal{S}$. 
\begin{lem}
\label{lem:non-normal-section-charac}There exists a unique hyperplane
section $F_{p}$ of $V_{5}$ which is singular along $\ell_{p}$.
The surface $F_{p}$ is swept out by the lines in $V_{5}$ intersecting
$\ell_{p}$. The surface $F_{p}$ contains $\Gamma_{p}$, $F_{p}\cap\mathcal{S}=5\ell_{p}\cup\Gamma_{p}$,
$F_{p}\cap\mathcal{C}=6p$ and the stabilizer $\mathrm{Aut}(V_{5},F_{p})$
of $F_{p}$ in $\mathrm{Aut}(V_{5})$ is the inverse image by $c_{\mathcal{C}}:\mathrm{Aut}(V_{5})\to\mathrm{Aut}(\mathcal{C})$
of the stabilizer $G_{p}\cong\mathbb{G}_{a}\rtimes\mathbb{G}_{m}$
of the point $p$. Moreover, the restriction homomorphism $\mathrm{Aut}(V_{5},F_{p})\to\mathrm{Aut}(F_{p})=\mathrm{Aut}(F_{p},\ell_{p},p)$
is injective. 
\end{lem}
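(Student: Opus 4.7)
The plan is to identify $F_p$ explicitly via the basis (\ref{eq:basis-FN}) as $V_5 \cap \{e_6 = 0\}$, and to derive the stated geometric properties by pulling back to the normalization $\psi_{\mathfrak{R}}:\mathfrak{R} \to \mathcal{S}$ of Proposition \ref{prop:Hilb-Evaluation-Morphism}(a). Without loss of generality I take $p = \Psi_{\mathfrak{R}}(([1:0],[1:0]))$, so that $\ell_p = \Psi_{\mathfrak{R}}(\{y_1=0\})$ and $\Gamma_p = \Psi_{\mathfrak{R}}(\{y_2=0\})$ by Proposition \ref{prop:Hilb-Evaluation-Morphism}(b). The section $e_6 = y_1^5 y_2$ lies in the weight-$0$ eigenspace for the $\mathbb{G}_m \subset G_p$ and is fixed by the $\mathbb{G}_a \subset G_p$, which acts on $\mathfrak{C}$ by $[x_i : y_i] \mapsto [x_i + by_i : y_i]$ leaving each $y_i$ unchanged; since the $\mathrm{Aut}(\mathfrak{C})$-module $W$ is irreducible, $\langle e_6 \rangle$ is the unique $G_p$-semi-invariant line in $W$, and $F_p := V_5 \cap \{e_6 = 0\}$ is the unique $G_p$-invariant hyperplane section of $V_5$.

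Direct substitutions give $\ell_p, \Gamma_p \subset F_p$, as $e_6$ vanishes on both $\{y_1 = 0\}$ and $\{y_2 = 0\}$. The pullback $\Psi_{\mathfrak{R}}^{*}(e_6) = y_1^5 y_2 \in H^0(\mathfrak{R}, \mathcal{O}(5f_1 + f_2))$ has divisor $5\{y_1 = 0\} + \{y_2 = 0\}$, which pushes forward under the normalization $\psi_{\mathfrak{R}}$ to the cycle $5\ell_p + \Gamma_p$ on $\mathcal{S}$; consistency with the degree $\mathcal{L}^2 \cdot \mathcal{S} = 2\mathcal{L}^3 = 10 = 5 + 5$ confirms that $F_p \cap \mathcal{S} = 5\ell_p \cup \Gamma_p$ scheme-theoretically. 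Restricting $e_6$ to the diagonal parametrization of $\mathcal{C}$ (setting $x_1 = x_2 = X$, $y_1 = y_2 = Y$) returns $Y^6$, whose divisor on $\mathcal{C} \cong \mathbb{P}^1$ is $6p$.

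The geometric description and singularity rest on the following standard criterion: for $q \in V_5$, the hyperplane section $V_5 \cap H$ is singular at $q$ if and only if $T_q V_5 \subset H$, equivalently if and only if every line of $V_5$ through $q$ lies in $V_5 \cap H$. Through each $q \in \ell_p \setminus \{p\}$ (which lies in $\mathcal{S} \setminus \mathcal{C}$) pass exactly two lines, namely $\ell_p$ and a unique line $\ell_q$ of general type. Using $G_p$-equivariance of $F_p$, with $\mathbb{G}_m \subset G_p$ acting transitively on $\ell_p \setminus \{p, \bar p\}$ (where $\bar p$ is the second torus-fixed point of $\ell_p$), one reduces the inclusion $\ell_q \subset F_p$ for all $q$ to a single base-case verification at $\bar p$, where $\ell_{\bar p}$ can be computed directly from (\ref{eq:basis-FN}). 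Consequently $F_p$ is singular along $\ell_p$ and coincides, by degree $5$ in $\mathbb{P}^6$, with the scroll $\ell_p \cup \bigcup_{q \in \ell_p} \ell_q$. Uniqueness of $F_p$: any hyperplane section singular along $\ell_p$ is stabilized by the connected group $G_p$ (which stabilizes $\ell_p$ and therefore permutes the finite set of such sections by rigidity), hence cut out by a $G_p$-semi-invariant section, hence equals $F_p$.

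Finally, any $g \in \mathrm{Aut}(V_5, F_p)$ stabilizes $F_p \cap \mathcal{C} = 6p$ in $\mathcal{C}$, so $c_{\mathcal{C}}(g)$ fixes $p$ and thus $c_{\mathcal{C}}(\mathrm{Aut}(V_5, F_p)) \subset G_p$; the reverse inclusion is the $G_p$-invariance of $F_p$ established above. For injectivity of $\mathrm{Aut}(V_5, F_p) \to \mathrm{Aut}(F_p)$, any element $g$ in the kernel fixes $F_p$ pointwise and hence fixes the linear span $\{e_6 = 0\} \cong \mathbb{P}^5$ of $F_p$ pointwise, so $g$ acts on $\mathbb{P}(W) \cong \mathbb{P}^6$ as a perspectivity whose lift in $\mathrm{GL}(W)$ has the form $\mathrm{diag}(1,\ldots,1,\alpha)$ up to scalar. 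But $\mathrm{Aut}(V_5) \cong \mathrm{PGL}_2$ acts on $W$ irreducibly; the eigenvalues of any semisimple element on $W$ are $r^0, r^1, \ldots, r^6$ for some ratio $r$, which can form the pattern $(1,\ldots,1,\alpha)$ only for $r = 1$, and unipotent elements act with nontrivial Jordan structure, so the kernel is trivial. The principal obstacle is the base-case verification that the line of general type through $\bar p$ is contained in $F_p$; once this is done, all remaining statements reduce to equivariance arguments or to the direct divisor computations carried out above.
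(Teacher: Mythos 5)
Your identification of $F_p$ with $V_5\cap\{e_6=0\}$ and the resulting computations of $F_p\cap\mathcal{S}=5\ell_p\cup\Gamma_p$ and $F_p\cap\mathcal{C}=6p$ via the basis (\ref{eq:basis-FN}) are sound and essentially reproduce the paper's own argument for those intersections; the determination of the stabilizer and the injectivity of $\mathrm{Aut}(V_5,F_p)\to\mathrm{Aut}(F_p)$ via the perspectivity/weight argument are also fine. The genuine gap lies in the two claims on which the whole lemma (and the rest of Section \ref{sec:V5-Stuff}) rests: that $F_p$ is \emph{singular} along $\ell_p$ and that it is the \emph{unique} hyperplane section with this property. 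Your criterion ``$V_5\cap H$ is singular at $q$ iff $T_qV_5\subset H$ iff every line of $V_5$ through $q$ lies in $H$'' fails in the direction you use it: through a point $q\in\ell_p\setminus\{p\}\subset\mathcal{S}\setminus\mathcal{C}$ there pass exactly two lines, which span only a $2$-plane inside the $3$-dimensional $T_qV_5$, so containing both lines does not force $H\supset T_qV_5$ (a smooth quadric surface in $\mathbb{P}^3$ contains two lines through every point). The singularity of $F_p$ along $\ell_p$ is really the statement that $F_p$ has two branches along $\ell_p$; in the paper this is extracted from \cite[Proposition (9.11)]{Fuj81} (the projection from a line of special type) and only afterwards refined into the normalization $\mathbb{F}_3\to F_p$ of Proposition \ref{prop:Normalization-Special-Hyperplane-Section}. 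Your argument never establishes it.

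The uniqueness argument has a parallel gap: the hyperplanes $H$ with $V_5\cap H$ singular along $\ell_p$ form a \emph{linear} subspace of the dual $\mathbb{P}^6$ (the condition $H\supset T_qV_5$ for all $q\in\ell_p$ is linear in $H$), so there is no a priori ``finite set permuted by rigidity''; $G_p$-stability of this linear system together with the uniqueness of the $B$-semi-invariant line in $W$ only shows that a $G_p$-fixed member equals $F_p$, not that the system is a single point. To close this you would have to bound its dimension, e.g.\ by checking that no other weight hyperplane $\{e_i=0\}$ cuts $V_5$ along a section singular along $\ell_p$ (every $B$-submodule of $W$ being a term of the unique full flag). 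Finally, the ``single base-case verification'' that the general-type line through $\bar p$ lies in $F_p$ --- which you yourself flag as the principal obstacle --- is not routine from (\ref{eq:basis-FN}) alone: general-type lines are images of fibers of $\pi:\mathfrak{U}\to\mathfrak{H}$ over points of $\mathfrak{H}\setminus\mathfrak{C}$, and computing one requires the evaluation map $\Psi$ on all of $\mathfrak{U}$, not just its restriction to $\mathfrak{R}$. The paper sidesteps all three issues by quoting \cite[Proposition (9.11)]{Fuj81} for existence, uniqueness and the scroll description in one stroke. You also leave the equality $\mathrm{Aut}(F_p)=\mathrm{Aut}(F_p,\ell_p,p)$ unaddressed, though that is minor by comparison.
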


\begin{proof}
The existence, the uniqueness and the fact that $F_{p}$ is swept
out by lines in $V_{5}$ intersecting $\ell_{p}$ follow from \cite[Proposition (9.11)]{Fuj81}
which identifies $F_{p}$ to the exceptional locus of the projection
$V_{5}\dashrightarrow Q\subset\mathbb{P}^{4}$ from $\ell_{p}$, where
$Q\subset\mathbb{P}^{4}$ is a smooth quadric threefold. The uniqueness
of $F_{p}$ implies that it is stable under the action of the inverse
image $c_{\mathcal{C}}^{-1}(G_{p})\subset\mathrm{Aut}(V_{5})$ of
the stabilizer $G_{p}\subset\mathrm{Aut}(\mathcal{C})$ of the point
$p$. Since $G_{p}$ acts transitively on $\mathcal{C}\setminus\{p\}$,
we have $F_{p}\cap\mathcal{C}=6p$ which implies in turn that $\mathrm{Aut}(V_{5},F_{p})=c_{\mathcal{C}}^{-1}(G_{p})$.
Letting $H$ be the unique hyperplane of $\mathbb{P}(W)$ such that
$F_{p}=V_{5}\cap H$, it follows from Proposition \ref{prop:Hilb-Evaluation-Morphism}
a) that the proper transform $\Psi_{\mathfrak{R}}^{*}H$ of $H$ by
the morphism $\Psi_{\mathfrak{R}}:\mathfrak{R}\to\mathbb{P}(W)$ with
image $\mathcal{S}$ is the zero locus of a section belonging to the
sub-$\mathrm{Aut}(\mathfrak{C})$-module $W$ of $H^{0}(\mathfrak{R},\mathcal{O}_{\mathfrak{R}}(5f_{1}+f_{2}))$
and whose stabilizer contains the subgroup $c_{\mathfrak{C}}(c_{\mathcal{C}}^{-1}(G_{p}))$.
The latter being a Borel subgroup $B=\mathbb{G}_{a}\rtimes\mathbb{G}_{m}=\mathrm{Spec}(k[a,\lambda^{\pm1}])$
of $\mathrm{Aut}(\mathfrak{C})\cong\mathrm{PGL}_{2}$, we can choose
coordinates $[x:y]$ on $\mathfrak{C}\cong\mathbb{P}^{1}$ for which
$B$ acts by $[x:y]\mapsto[\lambda x:y+ax]$. Since $W$ is an irreducible
$\mathrm{Aut}(\mathfrak{C})$-module and $B$ is a Borel subgroup
of $\mathrm{Aut}(\mathfrak{C}),$ $W$ contains a unique $B$-stable
line which is readily checked from the explicit basis (\ref{eq:basis-FN})
of $W$ to be equal to that generated by the element $e_{0}=x_{1}^{5}x_{2}$.
So, $\Psi_{\mathfrak{R}}^{*}H=Z(e_{0})$, and hence, by Proposition
\ref{prop:Hilb-Evaluation-Morphism} b), $F_{p}\cap\mathcal{S}=5\ell_{p}\cup\Gamma_{p}$.
The equality $\mathrm{Aut}(F_{p})=\mathrm{Aut}(F_{p},\ell_{p},p)$
and the injectivity of the restriction homomorphism $\mathrm{Aut}(V_{5},F_{p})\to\mathrm{Aut}(F_{p})$
are clear from the description of $H$ and the induced action of $G_{p}$
on $W/\langle e_{0}\rangle$. 
\end{proof}
We now recollect other basic properties of the non-normal hyperplane
section $F_{p}$ of $V_{5}\subset\mathbb{P}(W)$. 
\begin{prop}
\label{prop:Normalization-Special-Hyperplane-Section} Let $\pi:\mathfrak{F}_{[\ell_{p}]}:=\mathbb{P}(\mathcal{E}|_{T_{[\ell_{p}]}\mathfrak{C}})\to T_{[\ell_{p}]}\mathfrak{C}$
be the restriction of $\pi:\mathfrak{U}\to\mathfrak{H}$ over the
projective tangent line $T_{[\ell_{p}]}\mathfrak{C}$ to the conic
$\mathfrak{C}\subset\mathfrak{H}$ at the point $[\ell_{p}]$ corresponding
to the line $\ell_{p}$. Then the following hold:

a) The projective bundle $\pi:\mathfrak{F}_{[\ell_{p}]}\to T_{[\ell_{p}]}\mathfrak{C}$
is isomorphic to the Hirzebruch surface $\rho_{3}:\mathbb{F}_{3}\to\mathbb{P}^{1}$
and $\mathfrak{F}_{[\ell_{p}]}\cap j(\mathfrak{R})$ is a $2$-section
of $\pi$ which decomposes as the union of the exceptional section
$s_{[\ell_{p}]}$ of $\pi:\mathfrak{F}_{[\ell_{p}]}\to T_{[\ell_{p}]}\mathfrak{C}$
with self-intersection number $-3$ and of a section $\Upsilon_{p}\sim s_{[\ell_{p}]}+4f_{[\ell_{p}]}$,
where $f_{[\ell_{p}]}=\pi^{-1}([\ell_{p}])$, meeting $s_{[\ell_{p}]}$
transversally at the unique point 
\[
\mathfrak{p}=f_{[\ell_{p}]}\cap j(\Delta(\mathfrak{C}))=f_{[\ell_{p}]}\cap s_{[\ell_{p}]}.
\]

b) The morphism $\Psi|_{\mathfrak{F}_{[\ell_{p}]}}:\mathfrak{F}_{[\ell_{p}]}\to\mathbb{P}(W)$
factors through $F_{p}$ and the induced morphism $\nu:\mathfrak{F}_{[\ell_{p}]}\to F_{p}$
is the normalization of $F_{p}$. Moreover $\nu^{*}\omega_{F_{p}}^{\vee}\cong\mathcal{O}_{\mathfrak{F}_{[\ell_{p}]}}(s_{[\ell_{p}]}+4f_{[\ell_{p}]})\cong\omega_{\mathcal{\mathfrak{F}}_{[\ell_{p}]}}^{\vee}\otimes\mathcal{O}_{\mathfrak{F}_{[\ell_{p}]}}(-s_{[\ell_{p}]}-f_{[\ell_{p}]})$. 

c) The inverse image by $\nu$ of every line in $F_{p}$ other than
$\ell_{p}$ is the union of a fiber $f$ of $\pi:\mathfrak{F}_{[\ell_{p}]}\to T_{[\ell_{p}]}\mathfrak{C}$
other than $f_{[\ell_{p}]}$ and of the unique point $q\in f_{[\ell_{p}]}$
such that $\nu(q)=\nu(f\cap s_{[\ell_{p}]})$. On the other hand,
$\nu^{-1}\ell_{p}=s_{[\ell_{p}]}\cup f_{[\ell_{p}]}$. 

d) The morphism $\nu$ restricts to an isomorphism $\mathfrak{F}_{[\ell_{p}]}\setminus(s_{[\ell_{p}]}\cup f_{[\ell_{p}]})\to F_{p}\setminus\ell_{p}$,
maps the pairs $(s_{[\ell_{p}]},\mathfrak{p})$ and $(f_{[\ell_{p}]},\mathfrak{p})$
isomorphically onto the pair $(\ell_{p},p)$ and maps the pair $(\Upsilon_{p},\mathfrak{p})$
isomorphically onto the pair $(\Gamma_{p},p)$. 
\end{prop}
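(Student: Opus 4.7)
The plan is to analyze $\mathfrak{F}_{[\ell_p]}$ by first identifying two explicit sections of $\pi|_{\mathfrak{F}_{[\ell_p]}}$ coming from $j(\mathfrak{R})$. Since $\upsilon^{*}\mathcal{O}_{\mathfrak{H}}(1)\cong\mathcal{O}_{\mathfrak{R}}(\Delta(\mathfrak{C}))\cong\mathcal{O}_{\mathfrak{R}}(f_{1}+f_{2})$ and $T_{[\ell_p]}\mathfrak{C}$ meets $\mathfrak{C}$ only at the ramification point $[\ell_p]$, the preimage $\upsilon^{-1}(T_{[\ell_p]}\mathfrak{C})$ must be the divisor $f_{1}^{[\ell_p]}+f_{2}^{[\ell_p]}$, where $f_{i}^{[\ell_p]}$ denotes the $\mathrm{p}_{i}$-fiber of $\mathfrak{R}$ through $\Delta([\ell_p])$, and $\upsilon$ restricts to an isomorphism on each component. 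Composing with $j$, the curves $s_{[\ell_p]}:=j(f_{1}^{[\ell_p]})$ and $\Upsilon_{p}:=j(f_{2}^{[\ell_p]})$ are two distinct sections of $\pi:\mathfrak{F}_{[\ell_p]}\to T_{[\ell_p]}\mathfrak{C}$ both contained in $j(\mathfrak{R})$ and meeting only at the point $\mathfrak{p}=j(\Delta([\ell_p]))\in f_{[\ell_p]}\cap j(\Delta(\mathfrak{C}))$, the latter equality using Proposition \ref{prop:Hilb-Evaluation-Morphism} d).

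For part (a), $\mathfrak{F}_{[\ell_p]}$ is a Hirzebruch surface $\mathbb{F}_{n}$ since $\mathcal{E}|_{T_{[\ell_p]}\mathfrak{C}}$ is a rank $2$ locally free sheaf on $\mathbb{P}^{1}$; denote its exceptional section by $s$, fiber class by $f$, and relative hyperplane class by $\xi\sim s+nf$. Using $\Psi^{*}\mathcal{O}_{\mathbb{P}(W)}(1)\cong\mathcal{O}_{\mathbb{P}(\mathcal{E})}(1)\otimes\pi^{*}\mathcal{O}_{\mathfrak{H}}(1)$ and Proposition \ref{prop:Hilb-Evaluation-Morphism} b), $\Psi_{\mathfrak{R}}$ restricts to isomorphisms $f_{1}^{[\ell_p]}\to\ell_{p}$ and $f_{2}^{[\ell_p]}\to\Gamma_{p}$ of $\Psi^{*}\mathcal{O}(1)$-degrees $1$ and $5$, so the $\xi$-degrees of $s_{[\ell_p]}$ and $\Upsilon_{p}$ equal $0$ and $4$, giving classes $s_{[\ell_p]}\sim s$ and $\Upsilon_{p}\sim s+4f$. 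By Proposition \ref{prop:Hilb-Evaluation-Morphism} a), $j(\mathfrak{R})$ is the zero locus of a section of $\mathcal{O}_{\mathbb{P}(\mathcal{E})}(2)\otimes\pi^{*}\mathcal{O}_{\mathfrak{H}}(-2)$, so its restriction to $\mathfrak{F}_{[\ell_p]}$ has class $2\xi-2f\sim2s+(2n-2)f$. Since $\pi\circ j=\upsilon$ is finite, $j(\mathfrak{R})$ contains no full $\pi$-fiber, whence $s_{[\ell_p]}+\Upsilon_{p}=j(\mathfrak{R})\cap\mathfrak{F}_{[\ell_p]}$ as divisors, forcing $2n-2=4$, that is, $n=3$. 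Transversality at $\mathfrak{p}$ then follows from $s\cdot(s+4f)=1$.

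For part (b), the existence of the section $s_{[\ell_p]}$ mapping isomorphically onto $\ell_{p}$ shows that every line $\ell'$ with $[\ell']\in T_{[\ell_p]}\mathfrak{C}$ meets $\ell_{p}$, so $\Psi(\mathfrak{F}_{[\ell_p]})$ lies in the surface swept out by such lines, which is $F_{p}$ by Lemma \ref{lem:non-normal-section-charac}; equality holds by dimension. The induced morphism $\nu:\mathfrak{F}_{[\ell_p]}\to F_{p}$ is birational because a generic point of $F_{p}$ lies on a unique line of this ruling, and finite because each $\pi$-fiber $f_{[\ell']}$ maps isomorphically onto $\ell'$ by $(\xi+f)\cdot f_{[\ell']}=1$, while $s_{[\ell_p]}$ and $\Upsilon_{p}$ map isomorphically onto $\ell_{p}$ and $\Gamma_{p}$; since $\mathfrak{F}_{[\ell_p]}$ is smooth, $\nu$ is the normalization of $F_{p}$. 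The canonical formula follows from adjunction: $\omega_{F_{p}}^{\vee}\cong\mathcal{O}_{V_{5}}(1)|_{F_{p}}$, so $\nu^{*}\omega_{F_{p}}^{\vee}\cong(\xi+f)|_{\mathfrak{F}_{[\ell_p]}}\cong\mathcal{O}_{\mathfrak{F}_{[\ell_p]}}(s_{[\ell_p]}+4f_{[\ell_p]})$, which coincides with $\omega_{\mathfrak{F}_{[\ell_p]}}^{\vee}\otimes\mathcal{O}(-s_{[\ell_p]}-f_{[\ell_p]})$ since $\omega_{\mathbb{F}_{3}}^{\vee}\sim2s+5f$.

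Parts (c) and (d) reduce to bookkeeping on $\mathbb{F}_{3}$. For a line $\ell'\subset F_{p}$ distinct from $\ell_{p}$, the closure of $\nu^{-1}(\ell'\setminus\ell_{p})$ is the fiber $f_{[\ell']}$, and the preimage of the single point $\ell'\cap\ell_{p}=\{\nu(f_{[\ell']}\cap s_{[\ell_p]})\}$ consists of $f_{[\ell']}\cap s_{[\ell_p]}$ together with the unique point $q\in f_{[\ell_p]}$ mapping to it under the double-sheet structure of $\nu$ along $\ell_{p}$; similarly, $\nu^{-1}(\ell_{p})=s_{[\ell_p]}\cup f_{[\ell_p]}$ since these are the only irreducible curves mapped into $\ell_{p}$. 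For (d), $\nu$ is bijective from $\mathfrak{F}_{[\ell_p]}\setminus(s_{[\ell_p]}\cup f_{[\ell_p]})$ onto $F_{p}\setminus\ell_{p}$, which is smooth (the non-normal locus of $F_{p}$ being $\ell_{p}$), so Zariski's main theorem yields the desired isomorphism; the pairwise identifications follow from the descriptions in (c), with $p=\Psi_{\mathfrak{R}}(\Delta([\ell_p]))$ the common image of $\mathfrak{p}$ under $\nu|_{s_{[\ell_p]}}$, $\nu|_{f_{[\ell_p]}}$ and $\nu|_{\Upsilon_{p}}$. The main technical point is the degree comparison in part (a) that fixes $\mathbb{F}_{3}$, where the crucial input is the finiteness of $\pi\circ j$; everything else is geometric bookkeeping on the Hirzebruch surface.
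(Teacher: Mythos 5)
Your overall strategy is sound and close in spirit to the paper's: identify $\mathfrak{F}_{[\ell_p]}\cap j(\mathfrak{R})$ as $j(f_1)\cup j(f_2)$ for the two fibers of $\mathfrak{R}$ through $\Delta([\ell_p])$, compute their classes, and read off the geometry. The genuine divergence is in how you pin down $n=3$: the paper computes $\mathcal{E}|_{T_{[\ell_p]}\mathfrak{C}}\cong\mathcal{O}(3)\oplus\mathcal{O}$ directly from Schwarzenberger's result on the restriction of $\upsilon_*$ to a tangent line of the branch conic, which settles both the Hirzebruch type and the normalization of $\mathcal{O}_{\mathbb{P}(\mathcal{E})}(1)$ in one stroke, whereas you try to extract $n$ from intersection theory alone. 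As written, your argument is circular at exactly this point: declaring the relative hyperplane class to satisfy $\xi\sim s+nf$ is equivalent to asserting $\mathcal{E}|_{T_{[\ell_p]}\mathfrak{C}}\cong\mathcal{O}\oplus\mathcal{O}(n)$, which is part of what must be proved. Without that assumption one only knows $\xi\sim s+cf$ for some $c$, and your three relations ($\xi\cdot s_{[\ell_p]}=0$, $\xi\cdot\Upsilon_p=4$, $s_{[\ell_p]}+\Upsilon_p\sim 2\xi-2f$) are consistent with $s_{[\ell_p]}\sim s+\tfrac{n-3}{2}f$ for every odd $n\geq 3$. The gap is repairable with ingredients you already have: an irreducible section of $\mathbb{F}_n$ distinct from the exceptional one has class $s+bf$ with $b\geq n$, and $b=\tfrac{n-3}{2}\geq n$ is impossible, so $b=0$ and $n=3$. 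You should either add that step or simply import the Schwarzenberger computation as the paper does.

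Two smaller points. First, in b) you assert that $\nu$ is birational ``because a generic point of $F_p$ lies on a unique line of this ruling''; that is true but is not free --- a priori all three lines through a general point of $F_p$ could meet $\ell_p$. The clean justification is the degree count the paper uses: $\deg\nu\cdot\deg F_p=(s_{[\ell_p]}+4f_{[\ell_p]})^2=5$ and $\deg F_p=5$, so $\deg\nu=1$. Second, in d) your appeal to Zariski's main theorem needs $F_p\setminus\ell_p$ to be normal, which you assert (``the non-normal locus of $F_p$ being $\ell_p$'') rather than derive; since the paper itself is terse here this is a minor issue, but a word on why the non-normal locus is no larger than $\ell_p$ (e.g.\ via the structure of non-normal hyperplane sections of $V_5$, or from $\psi^*\mathcal{S}=2\mathfrak{U}|_{\mathfrak{C}}+j(\mathfrak{R})$ and the \'etaleness of $\psi$ off $\mathcal{S}$) would close it. Your derivation of $\Psi(\mathfrak{F}_{[\ell_p]})=F_p$ via the sweeping-lines description of $F_p$ is a nice alternative to the paper's degree dichotomy and is correct.
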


\begin{proof}
Since $T_{[\ell_{p}]}\mathfrak{C}$ is the tangent line to the branch
curve $\mathfrak{C}$ of $\upsilon:\mathfrak{R}\to\mathfrak{H}$,
\cite[Proposition 8]{Sch61} implies that 
\[
\mathcal{E}|_{T_{[\ell_{p}]}\mathfrak{C}}=\upsilon_{*}(\mathrm{p}_{1}^{*}(\omega_{\mathfrak{C}}^{\vee})^{\otimes2})|_{T_{[\ell_{p}]}\mathfrak{C}}\cong\upsilon_{*}\mathcal{O}_{\mathfrak{R}}(4f_{1})|_{T_{[\ell_{p}]}\mathfrak{C}}\cong\mathcal{O}_{T_{[\ell_{p}]}\mathfrak{C}}(3)\oplus\mathcal{O}_{T_{[\ell_{p}]}\mathfrak{C}},
\]
which give the isomorphism $(\mathfrak{F}_{[\ell_{p}]},(\mathcal{O}_{\mathbb{P}(\mathcal{E})}(1)\otimes\pi^{*}\mathcal{O}_{\mathfrak{H}}(1))|_{\mathfrak{F}_{[\ell_{p}]}})\cong(\mathbb{F}_{3},\mathcal{O}_{\mathbb{F}_{3}}(s_{[\ell_{p}]}+4f_{[\ell_{p}]}))$.
The curve $\upsilon^{-1}(T_{|\ell_{p}]}\mathfrak{C})$ is the union
of the fibers $f_{1}$ and $f_{2}$ of the first and second projections
$\mathrm{p}_{1},\mathrm{p}_{2}:\mathfrak{R}\to\mathfrak{C}$ passing
through the point $\upsilon^{-1}([\ell_{p}])\in\Delta(\mathfrak{C})$.
By construction of the closed immersion of $\mathfrak{H}$-schemes
$j:\mathfrak{R}\to\mathfrak{U}=\mathbb{P}(\mathcal{E})$ in Proposition
\ref{prop:Hilb-Evaluation-Morphism} a), we have $\mathfrak{F}_{[\ell_{p}]}\cap j(\mathfrak{R})=j(f_{1})\cup j(f_{2})$
and $j^{*}(\mathcal{O}_{\mathbb{P}(\mathcal{E})}(1)\otimes\pi^{*}\mathcal{O}_{\mathfrak{H}}(1))\cong\mathcal{O}_{\mathfrak{R}}(5f_{1}+f_{2})$.
In combination with the previous isomorphism, this implies that the
intersection numbers of $j(f_{1})$ and $j(f_{2})$ with $s_{[\ell_{p}]}+4f_{[\ell_{p}]}$
are equal to $1$ and $5$ respectively, and a straightforward computation
then shows that $j(f_{1})=s_{[\ell_{p}]}$ and that $j(f_{2})\sim s_{[\ell_{p}]}+4f_{[\ell_{p}]}$,
proving assertion a).

Since $(s_{[\ell_{p}]}+4f_{[\ell_{p}]})^{2}=5$ and the morphism $\Psi:\mathfrak{U}\to\mathbb{P}(W)$
is finite of degree $3$ onto its image and, by Proposition \ref{prop:Hilb-Evaluation-Morphism}
d), \'etale outside $\mathfrak{U}|_{\mathfrak{C}}\cup j(\mathfrak{R})$,
$\Psi(\mathfrak{F}_{[\ell_{p}]})$ is an irreducible and reduced surface
$F$ contained in $V_{5}$, whose degree in $\mathbb{P}(W)$ is either
$1$ or $5$. The first case is excluded since $\Psi|_{\mathfrak{F}_{[\ell_{p}]}}:\mathfrak{F}_{[\ell_{p}]}\to\Psi(\mathfrak{F}_{[\ell_{p}]})$
is finite of degree at most $3$. Thus $F$ is a hyperplane section
of $V_{5}$, the induced morphism $\nu:\mathfrak{F}_{[\ell_{p}]}\to F$
is finite and birational, whence equals the normalization of $F$.
Moreover, by Proposition \ref{prop:Hilb-Evaluation-Morphism} a) and
b), we have $\nu(s_{[\ell_{p}]})=\psi_{\mathfrak{R}}(f_{1})=\ell_{p}=\nu(f_{[\ell_{P}]})$
which implies in particular that $F$ is singular along $\ell_{p}$,
whence equal to $F_{p}$, and that $\nu(\Upsilon_{p})=\psi_{\mathfrak{R}}(f_{2})=\Gamma_{p}$,
which proves assertion d). Since $F_{p}$ is a Cartier divisor on
$V_{5}$, $F_{p}$ is a Gorenstein scheme whose dualizing sheaf $\omega_{F_{p}}\cong\mathcal{O}_{\mathbb{P}(W)}(-1)|_{F_{p}}$
is the canonical sheaf of $F_{p}$. On the other hand, since $\Psi^{*}\mathcal{O}_{\mathbb{P}(W)}(1)\cong\mathcal{O}_{\mathbb{P}(\mathcal{E})}(1)\otimes\pi^{*}\mathcal{O}_{\mathfrak{H}}(1)$,
we have 
\begin{align*}
\nu^{*}\omega_{F_{p}}^{\vee} & \cong\Psi|_{\mathfrak{F}_{[\ell_{p}]}}^{*}\mathcal{O}_{\mathbb{P}(W)}(1)|_{F_{p}}\cong(\mathcal{O}_{\mathbb{P}(\mathcal{E})}(1)\otimes\pi^{*}\mathcal{O}_{\mathfrak{H}}(1))|_{\mathfrak{F}_{[\ell_{p}]}}\cong\mathcal{O}_{\mathfrak{F}_{[\ell_{p}]}}(s_{[\ell_{p}]}+4f_{[\ell_{p}]})\\
 & \cong\mathcal{O}_{\mathfrak{F}_{[\ell_{p}]}}(s_{[\ell_{p}]}+4f_{[\ell_{p}]})\cong\omega_{\mathcal{\mathfrak{F}}_{[\ell_{p}]}}^{\vee}\otimes\mathcal{O}_{\mathfrak{F}_{[\ell_{p}]}}(-s_{[\ell_{p}]}-f_{[\ell_{p}]})
\end{align*}
which gives assertion b). Assertion c) is immediate. 
\end{proof}
\begin{rem}
\label{rem:equalizer}With the notation of Proposition \ref{prop:Normalization-Special-Hyperplane-Section},
let $\nu|_{s_{[\ell_{p}]}}:s_{[\ell_{p}]}\to\ell_{p}$ and $\nu|_{f_{[\ell_{p}]}}:f_{[\ell_{p}]}\to\ell_{p}$
be the isomorphisms induced by $\nu:\mathfrak{F}_{[\ell_{p}]}\to F_{p}$
and let $\mathcal{I}_{p}\subset\mathcal{O}_{F_{p}}$ and $\mathcal{I}_{\mathfrak{p}}\subset\mathcal{O}_{\mathfrak{F}_{[\ell_{p}]}}$
be the ideal sheaves of the points $p\in F_{p}$ and $\mathfrak{p}\in\mathfrak{F}_{[\ell_{p}]}$.
Then, see e.g. \cite[Theorem 2.6]{Re94}, $\nu^{*}H^{0}(F_{p},\omega_{F_{p}}^{\vee})\subset H^{0}(\mathfrak{F}_{[\ell_{p}]},\nu^{*}\omega_{F_{p}}^{\vee})$
is the equalizer of the two maps 
\begin{align*}
H^{0}(\mathfrak{F}_{[\ell_{p}]},\nu^{*}\omega_{F_{p}}^{\vee}) & \to H^{0}(s_{[\ell_{p}]},\nu^{*}\omega_{F_{p}}^{\vee}|_{s_{[\ell_{p}]}})\cong H^{0}(s_{[\ell_{p}]},\omega_{s_{[\ell_{p}]}}^{\vee}\otimes\mathcal{I}_{\mathfrak{p}}|_{s_{[\ell_{p}]}})\cong H^{0}(s_{[\ell_{p}]},(\nu|_{s_{[\ell_{p}]}})^{*}(\omega_{\ell_{p}}^{\vee}\otimes\mathcal{I}_{p}|_{\ell_{p}}))\\
H^{0}(\mathfrak{F}_{[\ell_{p}]},\nu^{*}\omega_{F_{p}}^{\vee}) & \to H^{0}(f_{[\ell_{p}]},\nu^{*}\omega_{F_{p}}^{\vee}|_{f_{[\ell_{p}]}})\cong H^{0}(f_{[\ell_{p}]},\omega_{f_{[\ell_{p}]}}^{\vee}\otimes\mathcal{I}_{\mathfrak{p}}|_{f_{[\ell_{p}]}})\cong H^{0}(f_{[\ell_{p}]},(\nu|_{f_{[\ell_{p}]}})^{*}(\omega_{\ell_{p}}^{\vee}\otimes\mathcal{I}_{p}|_{\ell_{p}})).
\end{align*}

By Proposition \ref{prop:Normalization-Special-Hyperplane-Section}
d), $\Omega_{(s_{[\ell_{p}]}\cup f_{[\ell_{p}]})/\ell_{p}}|_{\mathfrak{p}}\cong\Omega_{\mathfrak{F}_{[\ell_{p}]/F_{p}}}|_{\mathfrak{p}}$
is a $1$-dimensional $k$-vector space. Moreover, since $s_{[\ell_{p}]}$
and $f_{[\ell_{p}]}$ intersect transversally at $\mathfrak{p}$,
$\Omega_{\mathfrak{F}_{[\ell_{p}]}}|_{\mathfrak{p}}$ is the direct
sum of the restrictions to $\mathfrak{p}$ of the conormal sheaves
$\mathcal{C}_{s_{[\ell_{p}]}/\mathfrak{F}_{[\ell_{p}]}}$ and $\mathcal{C}_{f_{[\ell_{p}]}/\mathfrak{F}_{[\ell_{p}]}}$
and the induced projections from the two factors to $\Omega_{(s_{[\ell_{p}]}\cup f_{[\ell_{p}]})/\ell_{p}}|_{\mathfrak{p}}$
are surjective. The surjections $\Omega_{\mathfrak{F}_{[\ell_{p}]}}|_{\mathfrak{p}}\to\mathcal{C}_{s_{[\ell_{p}]}/\mathfrak{F}_{[\ell_{p}]}}|_{\mathfrak{p}}$,
$\Omega_{\mathfrak{F}_{[\ell_{p}]}}|_{\mathfrak{p}}\to\mathcal{C}_{f_{[\ell_{p}]}/\mathfrak{F}_{[\ell_{p}]}}|_{\mathfrak{p}}$
and $\Omega_{\mathfrak{F}_{[\ell_{p}]}}|_{\mathfrak{p}}\to\Omega_{(s_{[\ell_{p}]}\cup f_{[\ell_{p}]})/\ell_{p}}|_{\mathfrak{p}}$
determine three distinct points
\begin{equation}
q_{s_{[\ell_{p}]}},q_{f_{[\ell_{p}]}},\delta_{[\ell_{p}]}\in\mathbb{P}(\Omega_{\mathfrak{F}_{[\ell_{p}]}}|_{\mathfrak{p}})\label{eq:Def_delta_p}
\end{equation}
and we have the following characterization:.
\end{rem}

\begin{lem}
\label{lem:Smooth-Criterion-proper-transform}Let $\alpha_{\mathfrak{p}}:\hat{\mathfrak{F}}_{[\ell_{p}]}\to\mathfrak{F}_{[\ell_{p}]}$
be the blow-up of $\mathfrak{F}_{[\ell_{p}]}$ at $\mathfrak{p}$
with exceptional divisor $\mathfrak{E}_{[\ell_{p}]}\cong\mathbb{P}(\Omega_{\mathfrak{F}_{[\ell_{p}]}}|_{\mathfrak{p}})$.
Then the image by $\nu:\mathfrak{F}_{[\ell_{p}]}\to F_{p}$ of a curve
$C\subset\mathfrak{F}_{[\ell_{p}]}$ passing through $\mathfrak{p}$
and non-singular at $\mathfrak{p}$ is singular at $p$ if and only
if its proper transform in $\hat{\mathfrak{F}}_{[\ell_{p}]}$ intersects
$\mathfrak{E}_{\mathfrak{[\ell_{p}]}}$ at $\delta_{[\ell_{p}]}$.
\end{lem}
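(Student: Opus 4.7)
The plan is to work in suitable local coordinates at $\mathfrak{p}$ and unpack the normalization $\nu$ explicitly there. Using Proposition \ref{prop:Normalization-Special-Hyperplane-Section} d), I would take local coordinates $(u,v)$ on $\mathfrak{F}_{[\ell_p]}$ at $\mathfrak{p}$ with $s_{[\ell_p]}=\{v=0\}$ and $f_{[\ell_p]}=\{u=0\}$, together with a local coordinate $t$ at $p\in\ell_p$ chosen so that the isomorphisms $\nu|_{s_{[\ell_p]}}$ and $\nu|_{f_{[\ell_p]}}$ identify $t$ with $u$ and $v$ respectively. Completing $t$ to local coordinates $(t,w_1,w_2)$ on $V_5$ near $p$ with $\ell_p=\{w_1=w_2=0\}$, the morphism $\nu=(T(u,v),W_1(u,v),W_2(u,v))$ then satisfies $T\equiv u+v\pmod{(uv)}$ (from the chosen identifications on each branch) and $W_i\in(uv)$ (since $\nu$ maps $s_{[\ell_p]}\cup f_{[\ell_p]}$ into $\ell_p$). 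In particular, the differential $d\nu_{\mathfrak{p}}$ sends both $\partial_u$ and $\partial_v$ to $\partial_t$, so its kernel is spanned by $\partial_u-\partial_v$.

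Next, I would make $\delta_{[\ell_p]}$ explicit in these coordinates. Transversality of $s_{[\ell_p]}$ and $f_{[\ell_p]}$ at $\mathfrak{p}$ makes the restriction $\Omega_{\mathfrak{F}_{[\ell_p]}}|_{\mathfrak{p}}\to\Omega_{s_{[\ell_p]}\cup f_{[\ell_p]}}|_{\mathfrak{p}}$ an isomorphism, and the further quotient to $\Omega_{(s_{[\ell_p]}\cup f_{[\ell_p]})/\ell_p}|_{\mathfrak{p}}$ is the quotient by $d(\nu^{*}t)=du+dv$. Thus the point $\delta_{[\ell_p]}\in\mathfrak{E}_{[\ell_p]}=\mathbb{P}(\Omega_{\mathfrak{F}_{[\ell_p]}}|_{\mathfrak{p}})$ has kernel direction $k\langle du+dv\rangle$, equivalently the tangent direction $k\langle\partial_u-\partial_v\rangle$, which is precisely the kernel of $d\nu_{\mathfrak{p}}$.

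Finally, let $C$ be smooth at $\mathfrak{p}$ with local parametrization $s\mapsto(u(s),v(s))$ and tangent vector $(\alpha,\beta):=(u'(0),v'(0))\ne(0,0)$, so the proper transform of $C$ in $\hat{\mathfrak{F}}_{[\ell_p]}$ meets $\mathfrak{E}_{[\ell_p]}$ at the tangent direction $k\langle\alpha\partial_u+\beta\partial_v\rangle$. Pulling back the coordinates of $V_5$ via $\nu$ yields $T(s)=(\alpha+\beta)s+O(s^2)$ and $W_i(s)\in O(s^2)$. If $\alpha+\beta\ne 0$, then $T\circ\nu|_C$ is a uniformizer of $\mathcal{O}_{C,\mathfrak{p}}$, so $\mathcal{O}_{\nu(C),p}=\mathcal{O}_{C,\mathfrak{p}}$ and $\nu(C)$ is smooth at $p$. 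If $\alpha+\beta=0$, then $T,W_1,W_2$ all have pullback valuation $\ge 2$ on $C$; since they generate the maximal ideal at $p$ in the complete local ring of $F_p$, every element of $\mathfrak{m}_{F_p,p}$ pulls back with valuation $\ge 2$, so no uniformizer descends to $\mathcal{O}_{\nu(C),p}$ and $\nu(C)$ is singular at $p$. The condition $\alpha+\beta=0$ is exactly the condition that the tangent direction $k\langle\alpha\partial_u+\beta\partial_v\rangle$ equal $k\langle\partial_u-\partial_v\rangle=\delta_{[\ell_p]}$, yielding the claim. The main technical step to justify is the local form $T\equiv u+v\pmod{(uv)}$, which comes from realizing $\mathcal{O}_{F_p,p}$ as the subring of $\mathcal{O}_{\mathfrak{F}_{[\ell_p]},\mathfrak{p}}$ of functions whose restrictions to the two branches agree under the gluing determined by $\nu$.
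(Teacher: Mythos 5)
Your argument is correct and is in substance the same as the paper's: the paper's one-sentence proof asserts exactly that $\nu_{*}C$ is singular at $p$ if and only if the conormal direction $\mathcal{C}_{C/\mathfrak{F}_{[\ell_{p}]}}|_{\mathfrak{p}}$ coincides with the kernel of $\Omega_{\mathfrak{F}_{[\ell_{p}]}}|_{\mathfrak{p}}\to\Omega_{\mathfrak{F}_{[\ell_{p}]}/F_{p}}|_{\mathfrak{p}}$, which is precisely your statement that the tangent direction of $C$ at $\mathfrak{p}$ spans $\ker d\nu_{\mathfrak{p}}=k\langle\partial_{u}-\partial_{v}\rangle$, the direction defining $\delta_{[\ell_{p}]}$. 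Your local normal form $T\equiv u+v\pmod{(uv)}$, $W_{i}\in(uv)$ and the valuation argument on $\mathcal{O}_{C,\mathfrak{p}}$ simply make explicit, in coordinates, the equivalence that the paper treats as immediate.
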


\begin{proof}
Indeed, the image $\nu_{*}C$ of $C$ is singular at $p$ if and only
if the restriction $\mathcal{C}_{C/\mathfrak{F}_{[\ell_{p}]}}|_{\mathfrak{p}}$
at $\mathfrak{p}$ of its conormal sheaf $\mathcal{C}_{C/\mathfrak{F}_{[\ell_{p}]}}\subset\Omega_{\mathfrak{F}_{[\ell_{p}]}}|_{C}$
belongs to the kernel of the surjection $\Omega_{\mathfrak{F}_{[\ell_{p}]}}|_{\mathfrak{p}}\to\Omega_{\mathfrak{F}_{[\ell_{p}]}/F_{p}}|_{\mathfrak{p}}\cong\Omega_{s_{[\ell_{p}]}\cup f_{[\ell_{p}]}/\ell_{p}}|_{\mathfrak{p}}$. 
\end{proof}
We also record the following related result that will be used later
on in the proof of Proposition \ref{prop:Hilbert-scheme-correspondence}
in Section \ref{sec:X_22-stuff}. 
\begin{lem}
\label{lem:Propoer-transforms-special-quintics} Let $\beta_{\ell_{p}}:\hat{V}_{5}\to V_{5}$
be the blow-up with center at $\ell_{p}$ with exceptional divisor
$\rho:E_{\ell_{p}}=\mathbb{P}(\mathcal{C}_{\ell_{p}/V_{5}})\to\ell_{p}$
and let $s_{2}$ be the exceptional section of $\rho$ with self-intersection
number $-2$. Then the proper transforms $\hat{\Gamma}_{p'}$ in $\hat{V}_{5}$
of the special rational normal quintic curve $\Gamma_{p'}\subset\mathcal{S}$
for all $p'\in\mathcal{C}$ intersect $E_{\ell_{p}}$ at pairwise
distinct points of $s_{2}$.
\end{lem}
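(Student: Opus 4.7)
The plan is to handle the distinctness and the identification with $s_2$ separately. The distinctness is immediate: the $\Gamma_{p'}$ are pairwise disjoint (Proposition~\ref{prop:Hilb-Evaluation-Morphism} and the subsequent remarks), so the intersection points $q_{p'} := \Gamma_{p'} \cap \ell_p$ are pairwise distinct; since each $\Gamma_{p'}$ is smooth, its proper transform $\hat{\Gamma}_{p'}$ meets $E_{\ell_p}$ at a single point above $q_{p'}$ under $\rho$, and these are therefore distinct as well. For the identification with $s_2$, I would proceed via the explicit parametrization of Proposition~\ref{prop:Hilb-Evaluation-Morphism}. Choosing coordinates so that $p = \Psi_{\mathfrak{R}}(([1:0],[1:0]))$ and hence $\ell_p = \{[a:b:0:\cdots:0] \in \mathbb{P}(W)\}$, and parametrizing $\Gamma_{p'}$ for $p' = \Psi_{\mathfrak{R}}(([x_0:y_0],[x_0:y_0]))$ by the first factor of $\mathfrak{R}$, direct differentiation of the basis (\ref{eq:basis-FN}) at $y_1 = 0$, $x_1 = 1$ shows that the tangent vector of $\Gamma_{p'}$ at $q_{p'} = [x_0 : \tfrac{1}{5} y_0 : 0 : \cdots : 0]$ reduces, modulo $T_{q_{p'}}\ell_p$, to the ambient direction $e_2$, a \emph{fixed} direction in $W$ independent of $p'$.

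This constant $e_2$-direction defines a nowhere-vanishing line subbundle of $\mathcal{N}_{\ell_p/\mathbb{P}(W)} \cong \mathcal{O}_{\ell_p}(1)^{\oplus 5}$, isomorphic to $\mathcal{O}_{\ell_p}(1)$ (the ``$e_2$-summand''); since the tangent vectors to $\Gamma_{p'} \subset V_5$ lie in $TV_5$, this subbundle factors through $\mathcal{N}_{\ell_p/V_5} \cong \mathcal{O}_{\ell_p}(1) \oplus \mathcal{O}_{\ell_p}(-1)$, where it must coincide with the unique $\mathcal{O}_{\ell_p}(1)$-sub-line bundle of $\mathcal{N}_{\ell_p/V_5}$. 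Under the Grothendieck convention $E_{\ell_p} = \mathbb{P}(\mathcal{C}_{\ell_p/V_5})$, this subbundle corresponds precisely to the section $s_2$ of self-intersection $-2$. The remaining case $p' = p$, in which $\Gamma_p$ is tangent to $\ell_p$ at $p = q_p$ and the first-derivative data degenerates, can be handled either by substituting the second derivative (whose leading non-trivial component is again $e_2$) or by a closedness argument on the irreducible curve $\mathcal{C}$.

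The main technical hurdle I anticipate is the careful bookkeeping of projectivization conventions needed to verify that the $\mathcal{O}_{\ell_p}(1)$-sub-line bundle of $\mathcal{N}_{\ell_p/V_5}$ unambiguously corresponds to the $(-2)$-section $s_2$ of $E_{\ell_p}$ rather than its $(+2)$-counterpart.
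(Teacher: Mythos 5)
Your argument is correct, but it takes a genuinely different route from the paper's. The paper computes no tangent vectors at all: it observes that all the $\Gamma_{p'}$ lie on the surface $\mathcal{S}$, whose normalization $\psi_{\mathfrak{R}}:\mathfrak{R}\to\mathcal{S}$ satisfies $\psi_{\mathfrak{R}}^{-1}(\ell_{p})=f_{1}$, a fiber of $\mathrm{p}_{1}$ with trivial conormal sheaf; the induced homomorphism $(\Psi_{\mathfrak{R}}|_{f_{1}})^{*}\mathcal{C}_{\ell_{p}/V_{5}}\cong\mathcal{O}_{f_{1}}(-1)\oplus\mathcal{O}_{f_{1}}(1)\to\mathcal{C}_{f_{1}/\mathfrak{R}}\cong\mathcal{O}_{f_{1}}$ kills the $\mathcal{O}(1)$-summand since $\mathrm{Hom}(\mathcal{O}_{f_{1}}(1),\mathcal{O}_{f_{1}})=0$, so the proper transform of the whole surface $\mathcal{S}$ meets $E_{\ell_{p}}$ along $s_{2}$, and the claim for every $\hat{\Gamma}_{p'}\subset\hat{\mathcal{S}}$, including $p'=p$, follows at once; the distinctness is exactly as you say. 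You instead differentiate the parametrization \eqref{eq:basis-FN} to exhibit a \emph{constant} normal direction $e_{2}$ (your computation checks out: the derivative of the $e_{2}$-coordinate at $y_{1}=0$ is $\tfrac12 y_{2}\neq0$ for $p'\neq p$, and the second derivative is $2$ for $p'=p$), assemble these into an $\mathcal{O}_{\ell_{p}}(1)$-subbundle of $\mathcal{N}_{\ell_{p}/\mathbb{P}(W)}$, and invoke the same $\mathrm{Hom}$-vanishing to land it in the $\mathcal{O}(1)$-summand of $\mathcal{N}_{\ell_{p}/V_{5}}$; your convention check does come out the right way, the sub-line bundle $\mathcal{O}(1)\subset\mathcal{N}_{\ell_{p}/V_{5}}$ being precisely the $(-2)$-section of $\mathbb{P}(\mathcal{C}_{\ell_{p}/V_{5}})$. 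Two points to tighten: the step ``this subbundle factors through $\mathcal{N}_{\ell_{p}/V_{5}}$'' should be justified by the vanishing on the dense set $\{q_{p'}\}_{p'\neq p}$ of the composite $\mathcal{O}(1)\to\mathcal{N}_{\ell_{p}/\mathbb{P}(W)}\to\mathcal{N}_{V_{5}/\mathbb{P}(W)}|_{\ell_{p}}$; and for $p'=p$ the second-derivative computation is the safe option, since proper transforms do not vary flatly in families, so the ``closedness'' alternative would itself have to pass through the proper transform of $\mathcal{S}$ --- which is essentially the paper's argument. The paper's approach buys brevity and a uniform treatment of $p'=p$; yours buys an explicit identification of the common normal direction at the cost of coordinate bookkeeping.
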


\begin{proof}
Since $\ell_{p}\subset V_{5}$ is a line of special type, its conormal
sheaf $\mathcal{C}_{\ell_{p}/V_{5}}$ is isomorphic to $\mathcal{O}_{\ell_{p}}(-1)\oplus\mathcal{O}_{\ell_{p}}(1)$,
so that $E_{\ell_{p}}\cong\mathbb{F}_{2}$ with exceptional section
$s_{2}$ corresponding to the first projection $\mathcal{O}_{\ell_{p}}(-1)\oplus\mathcal{O}_{\ell_{p}}(1)\to\mathcal{O}_{\ell_{p}}(-1)$.
The line $\ell_{p}\subset\mathcal{S}$ is the image of a certain fiber
$f_{1}\subset\mathfrak{R}$ of the first projection $\mathrm{p}_{1}:\mathfrak{R}=\mathfrak{C\times}\mathfrak{C}\to\mathfrak{C}$
by the morphism $\Psi_{\mathfrak{R}}=\Psi\circ j:\mathfrak{R}\to\mathfrak{U}\to\mathbb{P}(W)$
with image $\mathcal{S}$ of Proposition \ref{prop:Hilb-Evaluation-Morphism}
a). The morphism $\Psi_{\mathfrak{R}}$ thus induces a homomorphism
of $\mathcal{O}_{f_{1}}$-modules $(\Psi_{\mathfrak{R}}|_{f_{1}})^{*}\mathcal{C}_{\ell_{p}/V_{5}}\cong\mathcal{O}_{f_{1}}(-1)\oplus\mathcal{O}_{f_{1}}(1)\to\mathcal{C}_{f_{1}/\mathfrak{R}}\cong\mathcal{O}_{f_{1}}$
which, since $\mathrm{Hom}(\mathcal{O}_{f_{1}}(1),\mathcal{O}_{f_{1}})=0$,
factors through the projection to $\mathcal{O}_{\ell_{p}}(-1)$. This
implies that the proper transform $\hat{\mathcal{S}}$ of $\mathcal{S}$
in $\hat{V}_{5}$ intersects $E_{\ell_{p}}$ along $s_{2}$. A special
rational normal quintic curve $\Gamma_{p'}\subset\mathcal{S}$, $p'\in\mathcal{C}\setminus\{p\}$,
being by definition the image by $\psi_{\mathfrak{R}}:\mathfrak{R}\to\mathcal{S}$
of a fiber $f_{2}$ of the second projection $\mathrm{p}_{2}:\mathfrak{R}=\mathfrak{C\times}\mathfrak{C}\to\mathfrak{C}$
other than that meeting $f_{1}$ at the point $f_{1}\cap\Delta(\mathfrak{C})$,
it follows that the proper transforms $\hat{\Gamma}_{p'}\subset\hat{\mathcal{S}}$,
$p'\in\mathcal{C}\setminus\{p\}$, intersect $E_{\ell_{p}}$ at pairwise
distinct points of $s_{2}=\hat{\mathrm{\mathcal{S}}}\cap E_{\ell_{p}}$. 
\end{proof}

\subsection{\protect\label{subsec:Classification-Quintic-Curves-Infinite-Stabilizers}
Classification of rational normal quintic curves with infinite stabilizers }

We now classify rational normal quintic curves $\Gamma\subset V_{5}$
with infinite stabilizers $G_{\Gamma}=\mathrm{Aut}(V_{5},\Gamma)\subset\mathrm{Aut}(V_{5})$,
up to the action of $\mathrm{Aut}(V_{5})$. 

\subsubsection{Preliminaries}

Recall that a rational normal quintic curve $\Gamma\subset V_{5}\subset\mathbb{P}(W)$
is contained in a unique hyperplane $H$ of $\mathbb{P}(W)$, hence
in a unique hyperplane section $F=V_{5}\cap H$ of $V_{5}$. We begin
with the following lemma which identifies hyperplane sections of $V_{5}$
that can contain rational normal quintic curves $\Gamma$ with infinite
stabilizer. 
\begin{lem}
\label{prop:RationalQuintic-inifnite-stabilizer} Let $\Gamma\subset V_{5}$
be a rational normal quintic curve with infinite stabilizer $G_{\Gamma}=\mathrm{Aut}(V_{5},\Gamma)$.
Then the unique hyperplane section of $V_{5}$ containing $\Gamma$
is a hyperplane section $F_{p}$ passing through a $G_{\Gamma}$-stable
point $p$ of $\mathcal{C}$ and singular along the unique line of
special type $\ell_{p}$ passing through $p$. 
\end{lem}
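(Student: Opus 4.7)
My plan is to encode the hyperplane section $F$ by a $G_\Gamma$-invariant line $L^\vee \subset W^\vee$ and then carry out a weight analysis. Since $\Gamma$ is a rational normal quintic, it spans a unique hyperplane $H$ of $\mathbb{P}(W) \cong \mathbb{P}^6$, so the unique hyperplane section of $V_5$ containing $\Gamma$ is $F := V_5 \cap H$, and $F$ is determined by the one-dimensional subspace $L^\vee := \{\ell \in W^\vee : \ell|_\Gamma = 0\}$ on which $G_\Gamma$ acts. Since $G_\Gamma$ is infinite, $G_\Gamma^0 \subseteq \mathrm{Aut}(V_5) \cong \mathrm{PGL}_2$ is nontrivial and connected; the case $G_\Gamma^0 = \mathrm{Aut}(V_5)$ is excluded at once because $\Gamma$ would then be a one-dimensional $\mathrm{Aut}(V_5)$-orbit closure in $V_5$, while the only such closure is the rational normal sextic $\mathcal{C}$ of degree $6 \neq 5$. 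So $G_\Gamma^0$ is a proper, nontrivial, hence solvable, subgroup of $\mathrm{PGL}_2$.

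When $G_\Gamma^0$ is $\mathbb{G}_a$ or a Borel $B$, it is contained in a unique Borel $B$ of $\mathrm{Aut}(V_5)$ which fixes a unique point $p \in \mathcal{C}$. The irreducibility of $W^\vee$ as a $\mathrm{PGL}_2$-module then yields a unique $\mathbb{G}_a$-invariant (equivalently $B$-invariant) line in $W^\vee$; by the argument in the proof of Lemma \ref{lem:non-normal-section-charac} this line corresponds to $F_p$, so $L^\vee$ is this line and $F = F_p$. Since $G_\Gamma$ normalizes $G_\Gamma^0$ and $N_{\mathrm{PGL}_2}(\mathbb{G}_a) = N_{\mathrm{PGL}_2}(B) = B = G_p$, the point $p$ is $G_\Gamma$-stable.

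The remaining case $G_\Gamma^0 = T$ is the step I expect to be the main obstacle, and I would resolve it by a weight computation. The $T$-fixed locus in $V_5$ is contained in $\mathbb{P}(W)^T$, the seven $T$-fixed weight lines of $\mathbb{P}(W)$, hence is finite, so $T$ acts nontrivially on $\Gamma \cong \mathbb{P}^1$ and the $T$-weights on $H^0(\Gamma, \mathcal{O}_\Gamma(1)) \cong H^0(\mathbb{P}^1, \mathcal{O}(5))$ form an arithmetic progression of six distinct integers. By the basis (\ref{eq:basis-FN}), the $T$-weights of $W^\vee$ are the seven consecutive integers $-6, -5, \ldots, 0$. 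The $T$-equivariant short exact sequence $0 \to L^\vee \to W^\vee \to H^0(\Gamma, \mathcal{O}_\Gamma(1)) \to 0$ identifies the weight set of the quotient with $\{-6, \ldots, 0\} \setminus \{\mathrm{wt}(L^\vee)\}$; this six-element subset of seven consecutive integers forms an arithmetic progression only if the removed weight is an endpoint. Thus $L^\vee$ must be one of the two extremal $T$-weight lines of $W^\vee$, which are precisely the $B$-invariant lines for the two Borel subgroups $B_1, B_2 \supset T$; by Lemma \ref{lem:non-normal-section-charac} they correspond to the hyperplane sections $F_{p_1}, F_{p_2}$ with $\{p_1, p_2\} = \mathcal{C}^T$. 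Hence $F = F_{p_i}$ for some $i \in \{1, 2\}$, and the $G_\Gamma$-invariance of $F_{p_i}$ transfers to the $G_\Gamma$-invariance of its singular locus $\ell_{p_i}$ and of $p_i = \ell_{p_i} \cap \mathcal{C}$, giving the desired $G_\Gamma$-stable point $p := p_i$.
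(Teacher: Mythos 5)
Your proposal is correct and takes essentially the same route as the paper's proof: both identify the hyperplane spanned by $\Gamma$ with a $G_{\Gamma}^{0}$-invariant line in the irreducible $7$-dimensional $\mathrm{PGL}_{2}$-module, settle the unipotent/Borel case by the uniqueness of the $\mathbb{G}_{a}$-invariant (highest weight) line singled out in the proof of Lemma \ref{lem:non-normal-section-charac}, and settle the torus case by observing that the six weights of $H^{0}(\Gamma,\mathcal{O}_{\mathbb{P}^{6}}(1)|_{\Gamma})$ form an arithmetic progression inside the seven consecutive weights of $W$, which forces the omitted weight to be extremal. Your formulation via the restriction sequence $0\to L^{\vee}\to W^{\vee}\to H^{0}(\Gamma,\mathcal{O}_{\Gamma}(5))\to0$ is only a cosmetic variant of the paper's computation with the basis (\ref{eq:basis-FN}), and in fact handles the faithfulness of the $T$-action on $\Gamma$ slightly more cleanly.
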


\begin{proof}
The stabilizer $G_{F}=\mathrm{Aut}(V_{5},F)$ in $\mathrm{Aut}(V_{5})$
of the unique hyperplane section $F=V_{5}\cap H$ containing $\Gamma$
is a proper subgroup of $\mathrm{Aut}(V_{5})$ containing $G_{\Gamma}$.
By definition of $\Psi_{\mathfrak{R}}:\mathfrak{R}\to\mathbb{P}(W)$,
$\Psi_{\mathfrak{R}}^{*}H$ is the zero locus on $\mathfrak{R}$ of
a section belonging to the sub-$\mathrm{Aut}(\mathfrak{C})$-module
$W$ of $H^{0}(\mathfrak{R},\mathcal{O}_{\mathfrak{R}}(5f_{1}+f_{2}))$
and whose stabilizer contains the infinite proper connected subgroup
$G_{F}^{0}$. If $G_{F}^{0}$ is not a torus, then $G_{F}^{0}$ is
either a Borel subgroup $B\cong\mathbb{G}_{a}\rtimes\mathbb{G}_{m}=\mathrm{Spec}(k[a,\lambda^{\pm1}])$
of $\mathrm{Aut}(\mathfrak{C})\cong\mathrm{PGL}_{2}$ or its unipotent
radical. In both cases, $H$ intersects $\mathcal{C}$ with multiplicity
$6$ at the unique fixed point $p$ for the induced action of $G_{F}^{0}$.
The restriction homomorphism $H^{0}(\mathbb{P}(W),\mathcal{O}_{\mathbb{P}(W)}(1))\to H^{0}(\mathcal{C},\mathcal{O}_{\mathcal{C}}(6))$
being by construction a $(\mathrm{PGL}_{2}$-equivariant) isomorphism,
there exists a unique $H$ intersecting $\mathcal{C}$ with multiplicity
$6$ at $p$. So $F$ is unique, hence $G_{p}$-stable, and consequently
equal to $F_{p}$ by Lemma \ref{lem:non-normal-section-charac}. 

Now assume that $G_{F}^{0}$ is a torus. Then $G_{\Gamma}^{0}=G_{F}^{0}$
and choosing coordinates $[x:y]$ on $\mathfrak{C}\cong\mathbb{P}^{1}$
so that the action of $G_{F}^{0}\cong\mathbb{G}_{m}=\mathrm{Spec}(k[\lambda^{\pm1}])$
on $\mathfrak{C}\cong\mathbb{P}^{1}$ is given by $[x:y]\mapsto[\lambda x:y]$,
we have $\Psi_{\mathfrak{R}}^{*}H=Z(s)$ for some $s\in W$ generating
a $\mathbb{G}_{m}$-stable $k$-vector subspace. Since the sections
$e_{i}$, $i=0,\ldots,6$, in (\ref{eq:basis-FN}) form a basis of
the weight space decomposition of $W$ with respect to the action
of $\mathbb{G}_{m}$, with respective weights $w_{i}=6-i$, we have
$\Psi_{\mathfrak{R}}^{*}H=Z(e_{i})$ for a certain $i\in\{0,\ldots,6\}$.
This implies in turn that the weights of $H^{0}(\Gamma,\mathcal{O}_{\mathbb{P}^{6}}(1)|_{\Gamma})$
for the induced faithful action of $G_{\Gamma}^{0}\subset\mathrm{Aut}(\Gamma)$
with respect to induced $G_{\Gamma}^{0}$-linearization of $\mathcal{O}_{\mathbb{P}^{6}}(1)|_{\Gamma}\cong\mathcal{O}_{\Gamma}(5)$
are equal to the $w_{j}$, $j\neq i$. Since on the other hand for
any choice of $G_{\Gamma}^{0}$-linearization of $\mathcal{O}_{\Gamma}(1)$
the corresponding weights of $H^{0}(\Gamma,\mathcal{O}_{\Gamma}(5))$
for the induced $G_{\Gamma}^{0}$-linearization of $\mathcal{O}_{\Gamma}(5)$
form a sequence of $6$ consecutive integers, the only possibility
is that $i=0$ or $6$, that is, $\Psi_{\mathfrak{R}}^{*}H=Z(x_{1}^{5}y_{2})$
or $Z(y_{1}^{5}x_{2})$. We conclude as in the previous case that
$F$ is one of the non-normal hyperplane sections $F_{p}$ or $F_{p'}$,
where $p$ and $p'$ are the two fixed points of the action of $c_{\mathcal{C}}(G_{F}^{0})\subset\mathrm{Aut}(\mathcal{C})$. 
\end{proof}
The stabilizer $G_{\Gamma}$ of a rational normal quintic curve $\Gamma\subset V_{5}$
contained in the hyperplane section $F_{p}$ of $V_{5}$ singular
along the unique line $\ell_{p}$ of special type passing through
a point $p\in\mathcal{C}$ is a subgroup of the Borel subgroup $G_{p}=c_{\mathcal{C}}^{-1}(\mathrm{Aut}(\mathcal{C},p))$
of $\mathrm{Aut}(V_{5})$. The group $G_{p}$ also identifies with
the Borel subgroup $c_{\mathfrak{C}}(G_{p})=\mathrm{Aut}(\mathfrak{C},[\ell_{p}])$
of $\mathrm{Aut}(\mathfrak{C})$ and, with the notation of Proposition
\ref{prop:Normalization-Special-Hyperplane-Section} and Lemma \ref{lem:Smooth-Criterion-proper-transform},
we have the following:
\begin{lem}
\label{lem:inverse-image-quintic-normalization}\label{lem:Action-on-special-hyperplane-sections}The
normalization morphism $\nu:\mathfrak{F}_{[\ell_{p}]}\to F_{p}$ is
$G_{p}$-equivariant and the following hold: 

a) The induced action of $G_{p}$ on $\mathfrak{F}_{[\ell_{p}]}$
is faithful with $\mathfrak{p}$ as a unique fixed point.

b) The curves $s_{[\ell_{p}]}$, $f_{[\ell_{p}]}$ and $\Upsilon_{p}$
are $G_{p}$-stable, and the restriction homomorphisms $G_{p}\to\mathrm{Aut}(s_{[\ell_{p}]},\mathfrak{p})$,
$G_{p}\to\mathrm{Aut}(f_{[\ell_{p}]},\mathfrak{p})$ and $G_{p}\to\mathrm{Aut}(\Upsilon_{p},\mathfrak{p})$
are isomorphisms. 

c) The proper transform by $\nu$ of a rational normal quintic curve
$\Gamma\subset F_{p}$ is an integral member of the complete linear
system $|s_{[\ell_{p}]}+4f_{[\ell_{p}]}|$. Moreover $\nu$ induces
one-to-one stabilizer preserving correspondence between rational normal
quintic curves $\Gamma\subset F_{p}$ passing through $p$ and integral
members $\Upsilon$ of $|s_{[\ell]}+4f_{[\ell]}|$ passing through
$\mathfrak{p}$ and whose proper transforms in the blow-up $\alpha_{\mathfrak{p}}:\hat{\mathfrak{F}}_{[\ell_{p}]}\to\mathfrak{F}_{[\ell_{p}]}$
at $\mathfrak{p}$ intersect the exceptional divisor $\mathfrak{E}_{[\ell_{p}]}$
away from the point $\delta_{[\ell_{p}]}$. 
\end{lem}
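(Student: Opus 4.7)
The plan is to verify the four assertions in order, exploiting the $G_p$-equivariance built into the construction of $\mathfrak{F}_{[\ell_p]}$ and the concrete Borel action on the $\mathfrak{R}$-picture already used in Lemma~\ref{prop:RationalQuintic-inifnite-stabilizer}.

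The equivariance of $\nu$ is immediate: $F_p$ is $G_p$-stable by Lemma~\ref{lem:non-normal-section-charac}, so the action lifts uniquely to the normalization, and equivalently $\mathfrak{F}_{[\ell_p]}=\mathbb{P}(\mathcal{E}|_{T_{[\ell_p]}\mathfrak{C}})$ is built from $G_p$-equivariant data since $T_{[\ell_p]}\mathfrak{C}$ is $G_p$-stable. In part~(a), faithfulness follows from the injectivity of $\mathrm{Aut}(V_5,F_p)\hookrightarrow\mathrm{Aut}(F_p)$ in Lemma~\ref{lem:non-normal-section-charac}. For the uniqueness of the fixed point, I will argue that any $G_p$-fixed point of $\mathfrak{F}_{[\ell_p]}$ projects to a $G_p$-fixed point of $T_{[\ell_p]}\mathfrak{C}\cong\mathbb{P}^1$, and since $G_p$ acts there as a Borel with unique fixed point $[\ell_p]$, all fixed points must lie in $f_{[\ell_p]}$. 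The point $\mathfrak{p}\in s_{[\ell_p]}=j(f_1)$ is fixed because the action on $f_1\cong\mathfrak{C}$ is that of $\mathrm{Aut}(\mathfrak{C},[\ell_p])$, fixing only $\upsilon^{-1}([\ell_p])$; and the action on $f_{[\ell_p]}$ cannot be trivial since, by Proposition~\ref{prop:Normalization-Special-Hyperplane-Section}~d), $\nu$ induces $G_p$-equivariant isomorphisms $s_{[\ell_p]}\cong\ell_p\cong f_{[\ell_p]}$ and the action on $s_{[\ell_p]}$ is the faithful Borel action on $\mathfrak{C}$. Hence $\mathfrak{p}$ is the unique $G_p$-fixed point.

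For part~(b), the $G_p$-stability of $f_{[\ell_p]}$, $s_{[\ell_p]}=j(f_1)$ and $\Upsilon_p=j(f_2)$ follows from the $G_p$-fixedness of $[\ell_p]$ and $\upsilon^{-1}([\ell_p])$. Each restriction homomorphism maps the $2$-dimensional connected Borel $G_p$ to $\mathrm{Aut}(\mathbb{P}^1,\mathfrak{p})\cong\mathbb{G}_a\rtimes\mathbb{G}_m$. Faithfulness holds in each case: for $s_{[\ell_p]}$ and $\Upsilon_p$ the induced action on $f_1,f_2\cong\mathfrak{C}$ is the faithful standard Borel action, and for $f_{[\ell_p]}$ the action is identified via $\nu$ with the action on $\ell_p\cong s_{[\ell_p]}$. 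A faithful morphism between connected algebraic groups of equal dimension onto a connected target is an isomorphism.

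Part~(c) combines a numerical calculation with the smoothness criterion of Lemma~\ref{lem:Smooth-Criterion-proper-transform}. Writing the proper transform as $\hat\Gamma\sim as_{[\ell_p]}+bf_{[\ell_p]}$ in $\mathrm{Pic}(\mathfrak{F}_{[\ell_p]})$ and using $\nu^*\omega_{F_p}^\vee\cong\mathcal{O}(s_{[\ell_p]}+4f_{[\ell_p]})$, the projection formula gives $\deg\Gamma=a+b=5$; positivity of $\hat\Gamma\cdot s_{[\ell_p]}=-3a+b$ together with integrality force $a=1$, $b=4$. Conversely, an integral $\Upsilon\in|s_{[\ell_p]}+4f_{[\ell_p]}|$ has $p_a=0$ by adjunction on $\mathbb{F}_3$, hence is smooth rational, and $\nu|_\Upsilon$ is birational onto its image $\nu(\Upsilon)\subset F_p$ of degree $5$. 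When $\Upsilon$ passes through $\mathfrak{p}$ with proper transform in $\hat{\mathfrak{F}}_{[\ell_p]}$ meeting $\mathfrak{E}_{[\ell_p]}$ outside $\delta_{[\ell_p]}$, Lemma~\ref{lem:Smooth-Criterion-proper-transform} ensures $\nu(\Upsilon)$ is smooth at $p$, and a smooth rational degree-$5$ curve in $H\cong\mathbb{P}^5$ is forced by linear normality to be embedded by the complete linear system $|\mathcal{O}_{\mathbb{P}^1}(5)|$, i.e.\ to be a rational normal quintic. The assignments $\Gamma\mapsto\hat\Gamma$ and $\Upsilon\mapsto\nu(\Upsilon)$ are mutually inverse, and stabilizer-preservation is immediate from the $G_p$-equivariance of $\nu$.

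The step I expect to require the most care is the non-triviality of the $G_p$-action on $f_{[\ell_p]}$ in (a) and the corresponding faithfulness claim in (b): this is the only place where an explicit description via $j(f_1)$ or $j(f_2)$ is not directly available, and I resolve it by transferring faithfulness from the exceptional section $s_{[\ell_p]}$ via the $G_p$-equivariant identifications of Proposition~\ref{prop:Normalization-Special-Hyperplane-Section}~d) rather than computing the $G_p$-module structure of $\mathcal{E}|_{[\ell_p]}$ directly.
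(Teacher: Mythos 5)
Your proposal is correct and follows essentially the same route as the paper's proof: equivariance via the universal property of normalization, faithfulness and uniqueness of the fixed point transferred through the identifications $s_{[\ell_p]}=j(f_1)$, $\Upsilon_p=j(f_2)$ and the isomorphisms $s_{[\ell_p]}\cong\ell_p\cong f_{[\ell_p]}$, the projection-formula computation $(a,b)=(1,4)$, and Lemma~\ref{lem:Smooth-Criterion-proper-transform} for the converse. The only imprecise step is the claim that a smooth rational degree-$5$ curve in $H\cong\mathbb{P}^5$ is automatically a rational normal quintic -- this requires ruling out degenerate such curves, e.g.\ by noting that an irreducible degree-$5$ curve in $F_p$ lying in a hyperplane of $H$ would equal a hyperplane section of $F_p$, which has arithmetic genus $1$ -- but the paper's own proof elides the same point.
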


\begin{proof}
The fact that $\nu=\psi|_{\mathfrak{F}_{[\ell_{p}]}}:\mathfrak{F}_{[\ell_{p}]}\to F_{p}$
is $G_{p}$-equivariant follows from the construction of $\psi:\mathfrak{U}\to V_{5}$
and the canonical identifications made in (\ref{eq:canonical-ID-Aut(V5)}).
With the notation of the proof of Proposition \ref{prop:Normalization-Special-Hyperplane-Section},
the group $G_{p}$ identifies further with the stabilizer of the point
$\upsilon^{-1}([\ell_{p}])\in\Delta(\mathfrak{C})$ for the diagonal
action of $\mathrm{Aut}(\mathfrak{C})$ on $\mathfrak{R}$. The fact
that the curves $s_{[\ell_{p}]}$, $f_{[\ell_{p}]}$ and $\Upsilon_{p}$
are $G_{p}$-stable and that the corresponding restriction homomorphisms
are isomorphisms then follows from the identifications $j(f_{1})=s_{[\ell_{p}]}$
and $j(f_{2})=\Upsilon_{p}$ and Proposition \ref{prop:Normalization-Special-Hyperplane-Section}
d). This also implies that the induced actions of $G_{p}$ on $\mathfrak{F}_{[\ell_{p}]}$
and $T_{[\ell_{p}]}\mathfrak{C}$ are faithful, and that the restriction
homomorphism $G_{p}\to\mathrm{Aut}(T_{[\ell_{p}]}\mathfrak{C},[\ell_{p}])$
is an isomorphism, from which it follows in turn that $\mathfrak{p}$
is the unique $G_{p}$-fixed point on $\mathfrak{F}_{[\ell_{p}]}$.
This proves assertions a) and b).

The proper transform $\Upsilon=\nu_{*}^{-1}\Gamma$ in $\mathfrak{F}_{[\ell_{p}]}\cong\mathbb{F}_{3}$
of a rational normal quintic curve $\Gamma\subset F_{p}$ is a non-singular
rational curve linearly equivalent to $as_{[\ell_{p}]}+bf_{[\ell_{p}]}$
for some non-negative integers $a,b$. By the projection formula for
$\nu$ combined with the isomorphism $\nu^{*}\omega_{F_{p}}^{\vee}\cong\mathcal{O}_{\mathfrak{F}_{[\ell_{p}]}}(s_{[\ell_{p}]}+4f_{[\ell_{p}]})$
of Proposition \ref{prop:Normalization-Special-Hyperplane-Section}
b), we have $a+b=\Upsilon\cdot(s_{[\ell_{p}]}+4f_{[\ell_{p}]})=\deg(\omega_{F_{p}}^{\vee}|_{\Gamma})=5$.
Since the curve $\Upsilon$ is irreducible, the only possibility is
$(a,b)=(1,4)$. Conversely, Lemma \ref{lem:Smooth-Criterion-proper-transform}
implies that the image $\Gamma=\nu_{*}\Upsilon$ of a non-singular
member $\Upsilon$ of the complete linear system $|s_{[\ell_{p}]}+4f_{[\ell_{p}]}|$
passing through $\mathfrak{p}$ is non-singular, whence a rational
normal quintic curve, if and only if $(\alpha_{\mathfrak{p}}^{-1})_{*}\Upsilon\cap\mathfrak{E}_{[\ell_{p}]}\neq\delta_{[\ell_{p}]}$.
Finally, the universal property of the normalization morphism $\nu:\mathfrak{F}_{[\ell_{p}]}\to F_{p}$
provides an isomorphism of groups $\nu^{*}:\mathrm{Aut}(F_{p})=\mathrm{Aut}(F_{p},\ell_{p},p)\to\mathrm{Aut}(\mathfrak{F}_{[\ell_{p}]},s_{[\ell_{p}]},f_{[\ell_{p}]},\mathfrak{p})$
which, because the induced $G_{p}$-action on $\mathfrak{F}_{[\ell_{p}]}$
is faithful maps $G_{p}\subset\mathrm{Aut}(F_{p})$ isomorphically
onto its image. This implies in turn that $\nu^{*}$ maps the stabilizer
$G_{\Gamma}\subset G_{p}\subset\mathrm{Aut}(F_{p},\ell_{p},p)$ of
a rational normal quintic curve $\Gamma$ passing through $p$ isomorphically
onto the stabilizer $G_{\Upsilon}\subset\mathrm{Aut}(\mathfrak{F}_{[\ell_{p}]},\mathfrak{p})=\mathrm{Aut}(\mathfrak{F}_{[\ell_{p}]},s_{[\ell_{p}]},f_{[\ell_{p}]},\mathfrak{p})$
of it proper transform $\Upsilon=\nu_{*}^{-1}\Gamma$. This completes
the proof of assertion c). 
\end{proof}

\subsubsection{\protect\label{subsec:Classification}The Classification }

Lemma \ref{lem:inverse-image-quintic-normalization} implies that
for a rational normal quintic curve $\Gamma\subset F_{p}$ with infinite
stabilizer $G_{\Gamma}$, the stabilizer $G_{\Upsilon}=G_{\Gamma}$
of the curve $\Upsilon=\nu_{*}^{-1}\Gamma$ is a subgroup of $G_{p}$,
whose neutral component $G_{\Upsilon}^{0}$ is thus either equal to
$G_{p}$ or to its the unipotent radical $G_{p}^{a}\cong\mathbb{G}_{a}$
or to a maximal torus $T\cong\mathbb{G}_{m}$ of $G_{p}$. We now
divide the classification of rational normal quintic curves $\Gamma=\nu_{*}\Upsilon$
into two parts according to whether $G_{\Upsilon}=G_{\Gamma}$ contains
or not a maximal torus of $G_{p}$. The various families of curves
which will be introduced below are summarized on an explicit model
in Example \ref{exa:Explicit-model-Take-1}. We henceforth use the
shorter notation $\ell$ for the line $\ell_{p}$. 

\medskip

$\bullet$ Case 1: $G_{\Gamma}$ contains a maximal torus $T\subset G_{p}$.
Since the restriction homomorphisms $G_{p}\to\mathrm{Aut}(s_{[\ell]},\mathfrak{p})$
and $G_{p}\to\mathrm{Aut}(f_{[\ell]},\mathfrak{p})$ are isomorphisms
by Lemma \ref{lem:Action-on-special-hyperplane-sections} a), the
inverse images by $\nu:\mathfrak{F}_{[\ell]}\to F_{p}$ of the unique
$T$-fixed point on $\ell\setminus\{p\}$ form a unique pair of $T$-fixed
points $r_{T}\sqcup r{}_{T}'\in(s_{[\ell]}\setminus\{\mathfrak{p}\})\sqcup(f_{[\ell]}\setminus\{\mathfrak{p}\})$.
The intersection $r_{T}''$ of the fiber $f_{T}$ of $\pi:\mathfrak{F}_{[\ell]}\to T_{[\ell]}\mathfrak{C}$
passing through $r_{T}$ with $\Upsilon_{p}$ is then a $T$-fixed
point. Since the restriction homomorphism 
\[
\mathrm{res}|_{\Upsilon_{p}}:H^{0}(\mathfrak{F}_{[\ell]},\mathcal{O}_{\mathfrak{F}_{[\ell]}}(s_{[\ell]}+3f_{[\ell]}))\to H^{0}(\Upsilon_{p},\mathcal{O}_{\mathfrak{F}_{[\ell]}}(s_{[\ell]}+3f_{[\ell]})|_{\Upsilon_{p}})\cong H^{0}(\mathbb{P}^{1},\mathcal{O}_{\mathbb{P}^{1}}(4))
\]
is an isomorphism, there exists a unique section $s_{T}\sim s_{[\ell]}+3f_{[\ell]}$
of $\pi$ intersecting $\Upsilon_{p}$ at $r_{T}''$ with multiplicity
$4$. The curve $s_{T}$ is necessarily $T$-stable, and since it
is disjoint from $s_{[\ell]}$, it intersects $f_{[\ell]}$ at the
$T$-fixed point $r_{T}'$. 
\begin{prop}
\label{prop:T-stable-quintics} For the pencil of curves $\langle s_{[\ell]}+4f_{T},s_{T}+f_{[\ell]}\rangle\subset|s_{[\ell]}+4f_{[\ell]}|$
in $\mathfrak{F}_{[\ell]}$ passing through $\mathfrak{p}$ generated
by $s_{[\ell]}+4f_{T}$ and $s_{T}+f_{[\ell]}$, the following hold: 

a) The exceptional divisor $\mathfrak{E}_{[\ell]}$ of the blow-up
$\alpha_{\mathfrak{p}}:\hat{\mathfrak{F}}_{[\ell]}\to\mathfrak{F}_{[\ell]}$
is a section of the proper transform of the pencil.

b) A rational normal quintic curve in $F_{p}$ whose stabilizer contains
$T$ is the image $\Gamma^{T}(v)$ by $\nu:\mathfrak{F}_{[\ell]}\to F_{p}$
of a member $\Upsilon^{T}(v)$, $v\in\mathfrak{E}_{[\ell]}\setminus\{q_{s_{[\ell]}},q_{f_{[\ell]}},\delta_{[\ell]}\}$,
of the pencil, other than $\Upsilon^{T}(q_{s_{[\ell]}}):=s_{[\ell]}+4f_{T}$,
$\Upsilon^{T}(q_{f_{[\ell]}}):=s_{T}+f_{[\ell]}$ and the unique member
$\Upsilon^{T}(\delta_{[\ell]})$ whose proper transform meets $\mathfrak{E}_{[\ell]}$
at $\delta_{[\ell]}$. In particular, every such curve intersects
$\ell$ at $p$ only.

c) All the curves $\Gamma^{T}(v)$, $v\in\mathfrak{E}_{[\ell]}\setminus\{q_{s_{[\ell]}},q_{f_{[\ell]}},\delta_{[\ell]}\}$,
other than $\Gamma_{p}=\nu_{*}\Upsilon_{p}$ have stabilizer $G_{\Gamma^{T}(v)}=T$. 
\end{prop}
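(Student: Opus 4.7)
For (a), the two generators lie in $|s_{[\ell]}+4f_{[\ell]}|$ (using $f_T\sim f_{[\ell]}$ and $s_T\sim s_{[\ell]}+3f_{[\ell]}$) and each passes through $\mathfrak{p}$ via a single smooth branch: $s_{[\ell]}$ (with tangent direction $q_{s_{[\ell]}}$) for $s_{[\ell]}+4f_T$, since $f_T\cap f_{[\ell]}=\emptyset$ forces $\mathfrak{p}\notin f_T$; and $f_{[\ell]}$ (with tangent direction $q_{f_{[\ell]}}$) for $s_T+f_{[\ell]}$, since $s_T\cap s_{[\ell]}=\emptyset$. These tangent directions being distinct and transverse at $\mathfrak{p}$, every member of the pencil has multiplicity $1$ at $\mathfrak{p}$, so after $\alpha_{\mathfrak{p}}$ each proper transform meets $\mathfrak{E}_{[\ell]}$ in one point transversely. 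As the two generators meet $\mathfrak{E}_{[\ell]}$ at distinct points, the proper-transform pencil has no base point on $\mathfrak{E}_{[\ell]}$, so the induced morphism $\mathfrak{E}_{[\ell]}\to\mathbb{P}^{1}$ (to the pencil parameter) has degree $1$, hence is an isomorphism; this makes $\mathfrak{E}_{[\ell]}$ a section of the proper transform of the pencil.

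For (b), let $\Gamma\subset F_p$ be a rational normal quintic with $T\subseteq G_{\Gamma}$; by Lemma \ref{lem:inverse-image-quintic-normalization}(c), $\Upsilon=\nu_{*}^{-1}\Gamma$ is an integral $T$-stable member of $|s_{[\ell]}+4f_{[\ell]}|$. I would decompose the $T$-representation on $H^{0}(\mathfrak{F}_{[\ell]},\mathcal{O}(s_{[\ell]}+4f_{[\ell]}))\cong k^{7}$ as follows: viewing $\mathfrak{F}_{[\ell]}\cong\mathbb{F}_{3}$ with its extended toric $T^{2}$-structure containing $T$, the seven $T^{2}$-weight eigenvectors correspond to the seven reducible toric divisors in $|s_{[\ell]}+4f_{[\ell]}|$, each a non-negative combination of the boundary curves $s_{[\ell]},s_T,f_{[\ell]},f_T$; restricting to our one-dimensional $T$ (determined by the faithful action on $T_{[\ell]}\mathfrak{C}$), exactly two of these seven $T^{2}$-weights collide, producing a single $2$-dimensional $T$-weight subspace $W_{\star}$ spanned by the defining sections of $s_{[\ell]}+4f_T$ and $s_T+f_{[\ell]}$, while the other five weight spaces are $1$-dimensional and yield only the reducible toric eigenvectors. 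Since $\Upsilon$ is integral and $T$-stable, its defining section must lie in $W_{\star}$, so $\Upsilon$ belongs to the pencil. The three exclusions are immediate: the two reducible generators correspond to the parameters $q_{s_{[\ell]}}$ and $q_{f_{[\ell]}}$ via the section of (a), and $\Upsilon^{T}(\delta_{[\ell]})$ has singular $\nu$-image at $p$ by Lemma \ref{lem:Smooth-Criterion-proper-transform}. Finally, $\Gamma^{T}(v)\cap\ell_p=\{p\}$ follows because the intersections $\Upsilon^{T}(v)\cap s_{[\ell]}$ and $\Upsilon^{T}(v)\cap f_{[\ell]}$ are single points (intersection number $1$) that must equal $\mathfrak{p}$ since $\Upsilon^{T}(v)$ passes through $\mathfrak{p}$ as a pencil member.

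For (c), Lemma \ref{lem:inverse-image-quintic-normalization}(c) identifies $G_{\Gamma^{T}(v)}$ with $G_{\Upsilon^{T}(v)}\subseteq G_p\cong\mathbb{G}_{a}\rtimes\mathbb{G}_{m}$, and $T\subseteq G_{\Upsilon^{T}(v)}$ by construction. If $G_{\Upsilon^{T}(v)}^{0}$ properly contained $T$, its dimension would be at least $2$, forcing $G_{\Upsilon^{T}(v)}^{0}=G_p$; then $\Upsilon^{T}(v)$ would be $G_p$-invariant. But the induced $G_p$-action on the pencil parameter $\mathfrak{E}_{[\ell]}\cong\mathbb{P}(\Omega_{\mathfrak{F}_{[\ell]}}|_{\mathfrak{p}})\cong\mathbb{P}^{1}$ is the standard Borel action with a unique fixed point corresponding to the unique $G_p$-invariant pencil member $\Upsilon_p$, so $v$ would equal the parameter $v_p$ of $\Upsilon_p$, contrary to assumption. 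Hence $G_{\Upsilon^{T}(v)}^{0}=T$, and a direct computation in the semidirect product shows $N_{G_p}(T)=T$, so $G_{\Upsilon^{T}(v)}=T$. The main obstacle is the $T$-weight count in (b) that pins down the $2$-dimensional weight subspace: it can be carried out via the toric polytope of $s_{[\ell]}+4f_{[\ell]}$ on $\mathbb{F}_3$, or via the explicit $\mathrm{PGL}_2$-module $W$ with weight basis $\{e_i\}$ together with the normalization equalizer description of Remark \ref{rem:equalizer}.
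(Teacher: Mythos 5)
Your parts a) and b) are correct. Part a) is essentially the paper's argument, made slightly more explicit. Part b) takes a genuinely different route: the paper pins down the pencil by intersecting a $T$-stable $\Upsilon$ with $\Upsilon_{p}$ (showing $\Upsilon\cap\Upsilon_{p}=4r_{T}''+\mathfrak{p}$ and then using surjectivity of the restriction map to $\Upsilon_{p}$ to see that the space $V_{4r_{T}''+\mathfrak{p}}$ is $2$-dimensional), whereas you read the pencil off the $T$-weight decomposition of $H^{0}(\mathfrak{F}_{[\ell]},\mathcal{O}(s_{[\ell]}+4f_{[\ell]}))$: the weights are $0,1,1,2,3,4,5$ in the explicit model of Example \ref{exa:Explicit-model-Take-1}, so the unique $2$-dimensional eigenspace is spanned by $x_{0}y_{1}$ and $x_{1}^{4}y_{0}$, i.e. by the defining sections of $s_{T}+f_{[\ell]}$ and $s_{[\ell]}+4f_{T}$, and an integral $T$-stable member must live there. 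This is clean and arguably shorter, at the price of invoking the explicit toric model (which the paper only sets up after this proposition); the paper's argument is coordinate-free. One thing your b) leaves out and the paper proves: that \emph{every} member of the pencil other than the two generators is irreducible (hence smooth). This is needed for part c) and Corollary \ref{cor:T-stable-quintics} to make sense, since they quantify over all $\Gamma^{T}(v)$, $v\in\mathfrak{E}_{[\ell]}\setminus\{q_{s_{[\ell]}},q_{f_{[\ell]}},\delta_{[\ell]}\}$; it does follow easily from your weight picture (a reducible $T$-stable member decomposes into $T$-stable components, and one checks only $s_{[\ell]}+4f_{T}$ and $s_{T}+f_{[\ell]}$ can occur), but you should say so.

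Part c) has a genuine gap. Your key step is that the induced $G_{p}$-action on the pencil parameter $\mathfrak{E}_{[\ell]}\cong\mathbb{P}(\Omega_{\mathfrak{F}_{[\ell]}}|_{\mathfrak{p}})$ is ``the standard Borel action with a unique fixed point.'' This is false: the isotropy representation of $G_{p}$ on $\Omega_{\mathfrak{F}_{[\ell]}}|_{\mathfrak{p}}$ is scalar (differentiate formula (\ref{eq:Explicit-Action-Formula}) at $\mathfrak{p}$ in the local coordinates $x=x_{0}x_{1}^{-1}$, $y=x_{1}^{3}y_{0}y_{1}^{-1}$: the Jacobian at the origin is $\lambda\cdot\mathrm{id}$), so $G_{p}$ acts \emph{trivially} on $\mathfrak{E}_{[\ell]}$. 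You can also see this without coordinates: $s_{[\ell]}$, $f_{[\ell]}$ and $\Upsilon_{p}$ are three $G_{p}$-stable curves through $\mathfrak{p}$ with pairwise distinct tangent directions, so $q_{s_{[\ell]}}$, $q_{f_{[\ell]}}$ and $\gamma_{p}$ are already three distinct $G_{p}$-fixed points on $\mathfrak{E}_{[\ell]}\cong\mathbb{P}^{1}$, forcing the connected group $G_{p}$ to act trivially. (Note also that the pencil itself is only $T$-stable, not $G_{p}$-stable, since it is built from $f_{T}$ and $s_{T}$; the only honest $G_{p}$-action here is the one on $\mathfrak{E}_{[\ell]}$, and it is trivial.) Consequently the deduction ``$\Upsilon^{T}(v)$ is $G_{p}$-stable, hence its parameter is the unique $G_{p}$-fixed point, hence $v=v_{p}$'' collapses: every parameter is $G_{p}$-fixed. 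The reduction to ``$G_{\Upsilon^{T}(v)}^{0}$ properly containing $T$ forces $G_{\Upsilon^{T}(v)}=G_{p}$'' and the normalizer computation $N_{G_{p}}(T)=T$ are fine; what is missing is a correct reason why no $\Upsilon^{T}(v)$ other than $\Upsilon_{p}$ can be $G_{p}$-stable. The paper's argument is: if $\Upsilon^{T}(v)$ were $G_{p}$-stable, then the scheme $\Upsilon^{T}(v)\cap\Upsilon_{p}=4r_{T}''+\mathfrak{p}$ would be $G_{p}$-stable, contradicting the fact (Lemma \ref{lem:Action-on-special-hyperplane-sections} a)) that $\mathfrak{p}$ is the unique $G_{p}$-fixed point of $\mathfrak{F}_{[\ell]}$. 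You would need either this argument or a direct check in the explicit model that $\{vx_{0}y_{1}+x_{1}^{4}y_{0}=0\}$ is $\mathbb{G}_{a}$-stable only for $v=-4$.
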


\begin{proof}
The proper transform by $\nu$ of a $T$-stable rational normal quintic
$\Gamma\subset F_{p}$ other than $\Gamma_{p}$ is a smooth $T$-stable
curve $\Upsilon\in|s_{[\ell]}+4f_{[\ell]}|$ whose intersection with
$\Upsilon_{p}$ is a subscheme of length $5$ supported on the union
of the two $T$-fixed points $\mathfrak{p}=s_{[\ell]}\cap f_{[\ell]}$
and $r_{T}''=s_{T}\cap f_{T}$ of $\Upsilon_{p}$. If $\Upsilon$
does not pass through $\mathfrak{p}$ then it intersects the $T$-stable
curves $s_{[\ell]}$ and $f_{[\ell]}$ only at the $T$-fixed point
$r_{T}=s_{[\ell]}\cap f_{T}$ and $r_{T}'=s_{T}\cap f_{[\ell]}$ respectively.
In particular, $\Upsilon$ does not pass through $r_{T}''=\Upsilon_{p}\cap f_{T}$,
and hence $\Upsilon\cap\Upsilon_{p}=\emptyset$, which is absurd.
So $\Upsilon$ passes through $\mathfrak{p}$, whence intersects the
$T$-stable curve $s_{T}\sim s_{[\ell]}+3f_{[\ell]}$ at the $T$-fixed
point $r_{T}''$ only, necessarily with multiplicity $(\Upsilon\cdot s_{T})=4$.
Since $\Upsilon_{p}\cap s_{T}=4r_{T}''$, this implies that $(\Upsilon\cdot\Upsilon_{p})_{r_{T}''}\geq4$,
and hence, since $\Upsilon\cdot\Upsilon_{p}=5$ and $\Upsilon$ also
intersects $\Upsilon_{p}$ at $\mathfrak{p}$, that $(\Upsilon\cdot\Upsilon_{p})_{r_{T}''}=4$
and $(\Upsilon\cdot\Upsilon_{p})_{\mathfrak{p}}=1$. Since the restriction
homomorphism 
\[
\mathrm{res}_{\Upsilon_{p}}:H^{0}(\mathfrak{F}_{[\ell]},\mathcal{O}_{\mathfrak{F}_{[\ell]}}(s_{[\ell]}+4f_{[\ell]}))\to H^{0}(\Upsilon_{p},\mathcal{O}_{\mathfrak{F}_{[\ell]}}(s_{[\ell]}+4f_{[\ell]})|_{\Upsilon_{p}})\cong H^{0}(\mathbb{P}^{1},\mathcal{O}_{\mathbb{P}^{1}}(5))
\]
is surjective, with kernel generated by a section whose zero locus
equals $\Upsilon_{p}$, it follows that the $k$-vector subspace $V_{4r_{T}''+\mathfrak{p}}\subset H^{0}(\mathfrak{F}_{[\ell]},\mathcal{O}_{\mathfrak{F}_{[\ell]}}(s_{[\ell]}+4f_{[\ell]}))$
consisting of sections whose restrictions to $\Upsilon_{p}$ vanish
with order at least $4$ at $r_{T}''$ and order at least $1$ at
$\mathfrak{p}$ is $2$-dimensional. Thus, every smooth $T$-stable
curve $\Upsilon\in|s_{[\ell]}+4f_{[\ell]}|$ is a smooth member of
the pencil $|V_{4r_{T}''+\mathfrak{p}}|=\langle s_{[\ell]}+4f_{T},s_{T}+f_{[\ell]}\rangle$,
which also contains $\Upsilon_{p}$ among its members. 

Assertion a) follows from the observation that the proper transforms
of $s_{[\ell]}+4f_{T}$ and $s_{T}+f_{[\ell]}$ intersect $\mathfrak{E}_{[\ell]}$
transversally at the distinct points $q_{s_{[\ell]}}$ and $q_{f_{[\ell]}}$.
Since $\mathfrak{F}_{[\ell]}\cong\mathbb{F}_{3}$, a $T$-stable reducible
member $C$ of $|V_{4r_{T}''+\mathfrak{p}}|\subset|s_{[\ell]}+4f_{[\ell]}|$
other than $s_{[\ell]}+4f_{T}$ is the union of a $T$-stable section
$s\sim s_{[\ell]}+3f_{[\ell]}$ of $\pi$ and of $T$-stable fiber
$f$ of it such that $C\cap\Upsilon_{p}=4r_{T}''+\mathfrak{p}$. Since
$s_{[\ell]}\cdot s=0$ and $s\neq s_{[\ell]}$, $s$ is disjoint from
$s_{[\ell]}$. It follows that $f=f_{[\ell]}$ and that $s$ is the
unique section $s_{T}$ intersecting $\Upsilon_{p}$ with multiplicity
$4$ at $r_{T}''$. Thus, all members $\Upsilon^{T}(v)$ of $|V_{4r_{T}''+\mathfrak{p}}|$
other than $\Upsilon^{T}(q_{s_{[\ell]}})$ and $\Upsilon^{T}(q_{f_{[\ell]}})$
are irreducible, whence smooth, curves on $\mathfrak{F}_{[\ell]}$
and assertion b) then follows from Lemma \ref{lem:inverse-image-quintic-normalization}.
Finally, we observe that since $|V_{4r_{T}''+\mathfrak{p}}|$ contains
three $T$-stable elements $\Upsilon^{T}(q_{s_{[\ell]}})$, $\Upsilon^{T}(q_{f_{[\ell]}})$
and $\Upsilon_{p}$ every member of $|V_{4r_{T}''+\mathfrak{p}}|$
is $T$-stable and moreover that $\Upsilon_{p}$ is $G_{p}$-stable
by Lemma \ref{lem:Action-on-special-hyperplane-sections} b). 

Now assume that $\Upsilon^{T}(v)$ is an irreducible member of $|V_{4r_{T}''+\mathfrak{p}}|$
other than $\Upsilon_{p}$ whose stabilizer $G_{\Upsilon^{T}(v)}\subset G_{p}$
properly contains $T$. Then $G_{\Upsilon^{T}(v)}$ contains a unipotent
element, whence is equal to $G_{p}$. But then, the intersection $\Upsilon^{T}(v)\cap\Upsilon_{p}=4r_{T}''\cup\mathfrak{p}$
would be $G_{p}$-stable, which is impossible since, by Lemma \ref{lem:Action-on-special-hyperplane-sections}
a), $\mathfrak{p}$ is the unique $G_{p}$-fixed point of $\mathfrak{F}_{[\ell]}$. 
\end{proof}
Since the group $G_{p}$ acts transitively on the set of its maximal
tori, we obtain the following:
\begin{cor}
\label{cor:T-stable-quintics}Up to the induced action of $G_{p}$
on $F_{p}$, there exists a unique family $\Gamma^{m}(v)$, $v\in\mathbb{P}^{1}\setminus\{3\textrm{ points}\}$,
of rational normal quintic curves in $F_{p}$ whose stabilizers contain
a maximal torus of $G_{p}$. The family contains the unique $G_{p}$-stable
curve $\Gamma_{p}$ and every curve $\Gamma^{m}(v)$ other than $\Gamma_{p}$
has stabilizer $G_{\Gamma^{m}(v)}$ isomorphic to $\mathbb{G}_{m}$. 
\end{cor}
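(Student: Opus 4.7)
The plan is to bootstrap Proposition \ref{prop:T-stable-quintics} by fixing one maximal torus $T\subset G_p$ once and for all and exploiting the conjugacy of maximal tori inside $G_p$ in order to transfer the family produced there to every other torus.

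First, I would fix a maximal torus $T\subset G_p$ and apply Proposition \ref{prop:T-stable-quintics} to obtain the pencil $\langle s_{[\ell]}+4f_T,\, s_T+f_{[\ell]}\rangle$ in $\mathfrak{F}_{[\ell]}$ and, via the identification $\mathfrak{E}_{[\ell]}\cong\mathbb{P}^1$ given by part (a) of that Proposition (namely that $\mathfrak{E}_{[\ell]}$ is a section of the proper transform of the pencil), the one-parameter family of rational normal quintic curves $\Gamma^T(v)\subset F_p$ indexed by $v$ in the complement of the three distinguished points $q_{s_{[\ell]}},\, q_{f_{[\ell]}},\, \delta_{[\ell]}$. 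I would then simply define $\Gamma^m(v):=\Gamma^T(v)$, so the parameter space $\mathbb{P}^1\setminus\{3\text{ points}\}$ is visibly $\mathfrak{E}_{[\ell]}$ minus these three points.

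For the uniqueness up to the induced $G_p$-action on $F_p$, let $\Gamma\subset F_p$ be any rational normal quintic whose stabilizer contains some maximal torus $T'\subset G_p$. Since $G_p\cong\mathbb{G}_a\rtimes\mathbb{G}_m$ is a Borel subgroup of $\mathrm{PGL}_2$ whose unipotent radical $\mathbb{G}_a$ acts transitively by conjugation on the set of its maximal tori, there exists $g\in G_p$ with $gT'g^{-1}=T$. Then $g\cdot\Gamma$ is again a rational normal quintic curve in $F_p$, and its stabilizer $gG_\Gamma g^{-1}$ contains $T$, so by Proposition \ref{prop:T-stable-quintics}(b) we have $g\cdot\Gamma=\Gamma^m(v)$ for some admissible $v$. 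This shows that every rational normal quintic with stabilizer containing a maximal torus of $G_p$ lies in the $G_p$-orbit of a unique such family.

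Finally, I would read off the stabilizer information directly from Proposition \ref{prop:T-stable-quintics}(c): the curve $\Gamma_p$ is $G_p$-stable by Lemma \ref{lem:Action-on-special-hyperplane-sections}(b), hence in particular $T$-stable, so it occurs as one of the members of the pencil and is the only one whose stabilizer strictly contains $T$; for every other $v$, the stabilizer of $\Gamma^m(v)$ equals $T\cong\mathbb{G}_m$. I do not foresee a substantive obstacle here: the geometric work has been carried out in Proposition \ref{prop:T-stable-quintics}, and the Corollary amounts to its reformulation after quotienting by the conjugation action of $G_p$ on the set of its maximal tori.
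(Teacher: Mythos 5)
Your argument is correct and is essentially the paper's own: the paper deduces the corollary from Proposition \ref{prop:T-stable-quintics} in one line by invoking the transitivity of the conjugation action of $G_{p}$ on its maximal tori, which is exactly the reduction you carry out (with the added, correct, observation that the unipotent radical already acts transitively on the tori and that $F_{p}$ is $G_{p}$-stable). No substantive difference.
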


\indent$\bullet$ Case 2: $G_{\Gamma}$ does not contain a maximal
torus of $G_{p}$. 
\begin{prop}
\label{prop:Ga-stable-quintics} For the pencil of curves $\langle s_{[\ell]}+4f_{[\ell]},\Upsilon_{p}\rangle\subset|s_{[\ell]}+4f_{[\ell]}|$
in $\mathfrak{F}_{[\ell]}$ generated by $s_{[\ell]}+4f_{[\ell]}$
and $\Upsilon_{p}$, the following hold: 

a) Every rational normal quintic curve in $F_{p}$ whose stabilizer
is infinite and does not contain a maximal torus of $G_{p}$ is the
image $\Gamma^{a}(v)$ by $\nu\colon\mathfrak{F}_{[\ell]}\to F_{p}$
of a member $\Upsilon^{a}(v)$ of the pencil other than $s_{[\ell]}+4f_{[\ell]}$
and $\Upsilon_{p}$.

b) The stabilizer $G_{\Gamma^{a}(v)}$ is a subgroup of $G_{p}$ containing
the unipotent radical $G_{p}^{a}$ of $G_{p}$ such that $G_{\Gamma^{a}(v)}/G_{p}^{a}$
is isomorphic to the group $\mu_{4}$. 

c) The coset space of the action of $G_{p}$ on the set of pairs $(F_{p},\Gamma^{a}(v))$
is (non-canonically) isomorphic to $k^{*}/(k^{*})^{4}=1$.
\end{prop}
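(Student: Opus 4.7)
The plan is to parallel the analysis of Case 1 (Proposition \ref{prop:T-stable-quintics}) by translating the problem, via Lemma \ref{lem:inverse-image-quintic-normalization} c), to the study of $G_p^a$-invariant members of the linear system $|s_{[\ell]}+4f_{[\ell]}|$ on $\mathfrak{F}_{[\ell]}$. The hypothesis forces the neutral component of $G_\Gamma$ to be the unipotent radical $G_p^a$ of $G_p$ (the only connected nontrivial proper subgroup of $G_p$ which is neither a maximal torus nor contains one), so the proper transform $\Upsilon=\nu_*^{-1}\Gamma$ is an integral $G_p^a$-stable member of $|s_{[\ell]}+4f_{[\ell]}|$ passing through $\mathfrak{p}$.

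I would first establish that the $G_p^a$-fixed locus in $|s_{[\ell]}+4f_{[\ell]}|$ is exactly the pencil $\mathcal{P}=\langle s_{[\ell]}+4f_{[\ell]},\Upsilon_p\rangle$. The inclusion $\mathcal{P}\subseteq|s_{[\ell]}+4f_{[\ell]}|^{G_p^a}$ holds because both generators are $G_p$-stable divisors by Lemma \ref{lem:Action-on-special-hyperplane-sections} b), and their defining sections are automatically $G_p^a$-invariant since $G_p^a\cong\mathbb{G}_a$ admits no nontrivial character. For the reverse inclusion, the $G_p$-equivariant Harder--Narasimhan filtration $0\to\mathcal{O}(3)\to\mathcal{E}|_{T_{[\ell]}\mathfrak{C}}\to\mathcal{O}\to0$ (with $\mathcal{O}(3)$ the unique, hence $G_p$-stable, destabilizing subbundle), twisted by $\mathcal{O}(1)$ and globalized, yields a $G_p$-equivariant short exact sequence
\[
0\to H^0(\mathbb{P}^1,\mathcal{O}(4))\to H^0(\mathfrak{F}_{[\ell]},\mathcal{O}(s_{[\ell]}+4f_{[\ell]}))\to H^0(\mathbb{P}^1,\mathcal{O}(1))\to0.
\]
Each outer term has a $1$-dimensional $G_p^a$-fixed subspace (the highest-weight line of the corresponding symmetric-power representation of $\mathrm{SL}_2$), bounding the middle term's $G_p^a$-fixed subspace by $2$; combined with the $2$-dimensional lower bound from $\mathcal{P}$, equality holds.

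Part a) is then immediate since, among the members of $\mathcal{P}$, the reducible $s_{[\ell]}+4f_{[\ell]}$ is non-integral and $\Upsilon_p$ has image $\Gamma_p$ with stabilizer $G_p$ containing a maximal torus, contradicting the hypothesis. For part b), the quotient $G_p/G_p^a\cong\mathbb{G}_m$ acts on the $2$-dimensional space $V^{G_p^a}$ and hence on $\mathcal{P}\cong\mathbb{P}^1$ with two fixed points. To compute the weight, I would use Cox homogeneous coordinates on $\mathfrak{F}_{[\ell]}\cong\mathbb{F}_3$: the section defining $s_{[\ell]}+4f_{[\ell]}$ is then a pure monomial, while the section defining $\Upsilon_p$, being a $T$-eigensection passing through $\mathfrak{p}$ transversally to both $s_{[\ell]}$ and $f_{[\ell]}$ (cf.\ Proposition \ref{prop:Normalization-Special-Hyperplane-Section}), is forced to be a combination of exactly two Cox monomials with nonzero coefficients whose $T$-weights must coincide. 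This coincidence pins down the relative $T$-linearization of the two Cox variables associated with the Harder--Narasimhan summands of $\mathcal{E}|_{T_{[\ell]}\mathfrak{C}}$, and a direct comparison then yields the weight difference $4$ between the two defining sections. Consequently $\mathbb{G}_m$ acts on $\mathcal{P}$ with weight $4$, the stabilizer of a general point is $\mu_4$, and $G_{\Gamma^a(v)}/G_p^a\cong\mu_4$. Part c) follows at once: the $G_p$-orbits on $\mathcal{P}\setminus\{s_{[\ell]}+4f_{[\ell]},\Upsilon_p\}\cong\mathbb{G}_m$ are indexed by $k^*/(k^*)^4$, which is trivial over our algebraically closed base field.

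The main obstacle is the weight-difference computation in part b): one should avoid trying to determine the $T$-linearizations on the two Harder--Narasimhan summands of $\mathcal{E}|_{T_{[\ell]}\mathfrak{C}}$ \emph{ab initio}. The cleanest route is to let the transversality of $\Upsilon_p$ to $s_{[\ell]}$ and $f_{[\ell]}$ at $\mathfrak{p}$ (Proposition \ref{prop:Normalization-Special-Hyperplane-Section}) do the work, since it forces $\sigma_{\Upsilon_p}$ to contain two Cox monomials coming from distinct Harder--Narasimhan summands, and the equality of their $T$-weights then determines the relative linearization and yields the required value $4$.
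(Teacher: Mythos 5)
Your argument is essentially correct but follows a genuinely different route from the paper's at the two key steps. To identify the pencil, the paper restricts sections to $\Upsilon_{p}$ and characterizes $\langle s_{[\ell]}+4f_{[\ell]},\Upsilon_{p}\rangle$ as the sections cutting out $5\mathfrak{p}$ on $\Upsilon_{p}$ (the space $V_{5\mathfrak{p}}$), reusing the surjectivity of $\mathrm{res}_{\Upsilon_{p}}$ from the proof of Proposition \ref{prop:T-stable-quintics}; your dimension count of $G_{p}^{a}$-invariants via the equivariant sequence $0\to H^{0}(\mathcal{O}(4))\to H^{0}(\mathcal{E}|_{T_{[\ell]}\mathfrak{C}}(1))\to H^{0}(\mathcal{O}(1))\to0$ coming from the canonical sub $\mathcal{O}(3)\subset\mathcal{E}|_{T_{[\ell]}\mathfrak{C}}$ is a clean substitute and is correct (left-exactness of invariants gives the upper bound $2$). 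For the $\mu_{4}$, the paper restricts the pencil map to the $T$-stable section $s_{T}$ and exhibits a degree-$4$ cover of $\mathbb{P}(V_{5\mathfrak{p}})$ totally ramified over the two special members, whereas you compute the $T$-weight difference of the two defining eigensections directly; your observation that transversality of $\Upsilon_{p}$ to $s_{[\ell]}$ and $f_{[\ell]}$ at $\mathfrak{p}$ forces its section to contain the two monomials $x_{0}y_{1}$ and $x_{1}^{4}y_{0}$ with equal $T$-weight does pin down the relative linearization, and the difference of weights between $x_{0}^{4}y_{0}$ and $x_{0}y_{1}$ is then $4(w(x_{0})-w(x_{1}))=\pm4$ because the $T$-action on $T_{[\ell]}\mathfrak{C}$ is faithful. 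This is confirmed by the explicit model of Example \ref{exa:Explicit-model-Take-1}, where $\Upsilon_{p}=\{4x_{0}y_{1}-x_{1}^{4}y_{0}=0\}$ has weight $1$ and $x_{0}^{4}y_{0}$ has weight $5$. Your method is more computational, the paper's more geometric; both are legitimate.

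There is one gap you should close: you only prove the forward direction, namely that every rational normal quintic with the stated stabilizer arises from the pencil. For parts b) and c) to be non-vacuous, and for Corollary \ref{cor:Ga-stable-quintics} to follow, you also need that every member $\Upsilon^{a}(v)$ other than $s_{[\ell]}+4f_{[\ell]}$ and $\Upsilon_{p}$ actually maps to a rational normal quintic, i.e.\ (i) it is irreducible (hence smooth), and (ii) its image under $\nu$ is non-singular at $p$. For (i) the paper observes that a reducible member would be the union of a $G_{p}^{a}$-stable section meeting $\Upsilon_{p}$ only at $\mathfrak{p}$ and a multiple of the unique $G_{p}^{a}$-stable fiber $f_{[\ell]}$, forcing it to be $s_{[\ell]}+4f_{[\ell]}$. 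For (ii) one must check the criterion of Lemma \ref{lem:inverse-image-quintic-normalization} c): since all members pass through $\mathfrak{p}$ with the same tangent direction as $\Upsilon_{p}$ (the pencil has a length-$5$ base scheme supported at $\mathfrak{p}$), their proper transforms meet $\mathfrak{E}_{[\ell]}$ at the point $\gamma_{p}\neq\delta_{[\ell]}$, so the images avoid the singularity obstruction. Neither point is hard, but without them your parts b) and c) compute stabilizers and orbits of curves whose existence as rational normal quintics has not been established.
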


\begin{proof}
Let $\Gamma\subset F_{p}$ be a rational normal quintic curve whose
stabilizer $G_{\Gamma}\subset G_{p}$ is infinite but does not contain
a maximal torus of $G_{p}$. Then the neutral component $G_{\Gamma}^{0}$
of $G_{\Gamma}$ is equal to $G_{p}^{a}$ and the proper transform
of $\Gamma$ in $\mathfrak{F}_{[\ell]}$ is a smooth $G_{p}^{a}$-stable
curve $\Upsilon\sim s_{[\ell]}+4f_{[\ell]}$ which intersects $\Upsilon_{p}$,
$s_{[\ell]}$ and $f_{[\ell]}$ at $G_{p}^{a}$-fixed points only,
whence at $\mathfrak{p}$ only. Thus, $\Upsilon\cap\Upsilon_{p}=5\mathfrak{p}$
and the same argument as in the proof of Proposition \ref{prop:T-stable-quintics}
shows that $\Upsilon$ is an irreducible member of the pencil $\langle s_{[\ell]}+4f_{[\ell]},\Upsilon_{p}\rangle=|V_{5\mathfrak{p}}|$,
necessarily other than $\Upsilon_{p}$ by hypothesis. A reducible
member of $|V_{5\mathfrak{p}}|$ being the union of a $G_{p}^{a}$-stable
section $s$ of $\pi$ meeting $\Upsilon_{p}$ at $\mathfrak{p}$
only and of a multiple of the unique $G_{p}^{a}$-stable fiber $f_{[\ell]}$
of $\pi$, it follows that $s_{[\ell]}+4f_{[\ell]}$ is the unique
reducible member of $|V_{5\mathfrak{p}}|$. All members $\Upsilon^{a}(v)$
are thus smooth curves, and their proper transforms by $\alpha_{\mathfrak{p}}:\hat{\mathfrak{F}}_{[\ell]}\to\mathfrak{F}_{[\ell]}$
intersect the exceptional divisor $\mathfrak{E}_{[\ell]}$ at the
same point as that of $\Upsilon_{p}$. Assertion a) then follows from
Lemma \ref{lem:inverse-image-quintic-normalization}. Since $s_{[\ell]}+4f_{[\ell]}$
and $\Upsilon_{p}$ are $G_{p}^{a}$-stable, every curve in the pencil
$|V_{5\mathfrak{p}}|$ is $G_{p}^{a}$-stable. 

Let $T\subset G_{p}$ be a maximal torus and, as in the paragraph
preceding Proposition \ref{prop:T-stable-quintics}, let $s_{T}\sim s_{[\ell]}+3f_{[\ell]}$
be the unique $T$-stable section of $\pi:\mathfrak{F}_{[\ell]}\to T_{[\ell]}\mathfrak{C}$
intersecting $\Upsilon_{p}$ with multiplicity $4$ at its unique
$T$-fixed point $r_{T}''\in\Upsilon_{p}\setminus\{\mathfrak{p}\}$.
Since $s_{[\ell]}+4f_{[\ell]}$ and $\Upsilon_{p}$ are $T$-stable,
$f:\mathfrak{F}_{[\ell]}\dashrightarrow\mathbb{P}(V_{5\mathfrak{p}})$
is $T$-equivariant for a uniquely determined $T$-action on $\mathbb{P}(V_{5\mathfrak{p}})$,
not necessarily faithful, which fixes the points $q_{0}:=f_{*}(s_{[\ell]}+4f_{[\ell]})$
and $q_{\infty}:=f_{*}(\Upsilon_{p})$. On the other hand, since $(s_{[\ell]}+4f_{[\ell]})|_{s_{T}}=4r_{T}'$
and $\Upsilon_{p}|_{s_{T}}=4r_{T}''$, the restriction $\xi=f|_{s_{T}}:s_{T}\to\mathbb{P}(V_{5\mathfrak{p}})$
is a $T$-equivariant finite cover of degree $4$ totally ramified
over $\xi(r_{T}')=q_{0}$ and $\xi(r_{T}'')=q_{\infty}$ and \'etale
over $\mathbb{P}(V_{5\mathfrak{p}})\setminus\{q_{0},q_{\infty}\}$.
Identifying $T=\mathrm{Spec}(k[t^{\pm1}])$ with the open orbit $s_{T}\setminus\{r_{T}',r_{T}''\}=\mathrm{Spec}(k[u^{\pm1}])$
of its induced faithful action on $s_{T}$, we conclude that $\mathbb{P}(V_{5\mathfrak{p}})\setminus\{q_{0},q_{\infty}\}$
is $T$-equivariantly isomorphic to $\mathrm{Spec}(k[u^{\pm4}])$
on which $T$ acts by $t\cdot u^{4}=(tu)^{4}$. This shows that the
stabilizer $G_{\Upsilon^{a}(v)}$ of every curve $\Upsilon^{a}(v)$
in the pencil $|V_{5\mathfrak{p}}|$ other than $s_{[\ell]}+4f_{[\ell]}$
and $\Upsilon_{p}$ intersects $T$ along a subgroup isomorphic to
$\mu_{4}$ and identifies the coset space of the action of $T$ on
the set of pairs $(F_{p},\Gamma^{a}(v))$ with the orbit space $k^{*}/(k^{*})^{4}$
of the action of $T(k)$ on the $k$-rational points of $\mathbb{P}(V_{5\mathfrak{p}})\setminus\{q_{0},q_{\infty}\}$.
Since $T$ is a maximal torus of $G_{p}$, the induced homomorphism
$T\to G_{p}/G_{p}^{a}$ is an isomorphism, which gives assertion b). 

Assertion c) follows from the fact that since every curve $\Gamma^{a}(v)$
is $G_{p}^{a}$-stable, the coset space of pairs $(F_{p},\Gamma^{a}(v))$
under the action of $G_{p}\cong T\ltimes G_{p}^{a}$ identifies with
that under the action of $T$.
\end{proof}
Since the base field $k$ is by assumption algebraically closed, we
obtain the following:
\begin{cor}
\label{cor:Ga-stable-quintics}Up to the induced action of $G_{p}$
on $F_{p}$, there exists a unique rational normal quintic curve $\Gamma^{a}\subset F_{p}$
whose stabilizer contains the unipotent radical $G_{p}^{a}$ of $G_{p}$
as a subgroup of finite index. Moreover, $G_{\Gamma^{a}}$ is isomorphic
to the subgroup $\mathbb{G}_{a}\rtimes\mu_{4}$ of $G_{p}\cong\mathbb{G}_{a}\rtimes\mathbb{G}_{m}$. 
\end{cor}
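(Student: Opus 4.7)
My plan is to derive the corollary directly from Proposition \ref{prop:Ga-stable-quintics} by exploiting the algebraic closedness of $k$. The key observation is that in Proposition \ref{prop:Ga-stable-quintics} c), the coset space of the $G_p$-action on the set of pairs $(F_p,\Gamma^a(v))$ was identified with $k^*/(k^*)^4$, and since $k$ is algebraically closed, every element of $k^*$ is a fourth power, so this quotient is trivial. Thus the $G_p$-action on the set of rational normal quintic curves $\Gamma \subset F_p$ whose stabilizer contains $G_p^a$ with finite index is transitive, giving uniqueness of $\Gamma^a$ up to the $G_p$-action.

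For the group-theoretic statement, I would argue as follows. By Proposition \ref{prop:Ga-stable-quintics} b), $G_{\Gamma^a}$ is a subgroup of $G_p\cong \mathbb{G}_a\rtimes\mathbb{G}_m$ containing the unipotent radical $G_p^a$ and satisfying $G_{\Gamma^a}/G_p^a \cong \mu_4$. Any subgroup of $G_p$ containing $G_p^a$ is the preimage under the quotient $G_p\to G_p/G_p^a \cong \mathbb{G}_m$ of a unique subgroup of $\mathbb{G}_m$. The only subgroup of $\mathbb{G}_m$ isomorphic to $\mu_4$ is $\mu_4\subset\mathbb{G}_m$ itself, so $G_{\Gamma^a}$ is the preimage of this $\mu_4$. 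Since this preimage is split by choosing any maximal torus $T\subset G_p$ intersected with $G_{\Gamma^a}$ (yielding a copy of $\mu_4$), we get the semidirect product decomposition $G_{\Gamma^a}\cong \mathbb{G}_a\rtimes\mu_4$ as a subgroup of $\mathbb{G}_a\rtimes\mathbb{G}_m$.

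There is no real obstacle here: the substantive work was carried out in Proposition \ref{prop:Ga-stable-quintics}, where the $k^*/(k^*)^4$-parametrization was extracted from the analysis of the totally ramified cover $\xi: s_T \to \mathbb{P}(V_{5\mathfrak{p}})$. The corollary is essentially the statement that passing from an arbitrary base field to an algebraically closed one collapses this torsor, together with an elementary verification of the abstract structure of $G_{\Gamma^a}$ inside the Borel $G_p$.
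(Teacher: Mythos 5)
Your proposal is correct and follows essentially the same route as the paper, which deduces the corollary immediately from Proposition \ref{prop:Ga-stable-quintics}: algebraic closedness of $k$ makes the coset space $k^{*}/(k^{*})^{4}$ of part c) trivial, giving uniqueness, while part b) (whose proof already exhibits $G_{\Gamma^{a}}\cap T\cong\mu_{4}$ inside a maximal torus, hence the splitting) gives $G_{\Gamma^{a}}\cong\mathbb{G}_{a}\rtimes\mu_{4}$. Your additional remarks on the preimage under $G_{p}\to G_{p}/G_{p}^{a}$ merely make explicit what the paper leaves implicit.
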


\subsubsection{Parameter space and conclusions}

Since $\mathrm{Aut}(V_{5})$ acts transitively on the set of hyperplane
sections $F_{p}$ singular along a line $\ell_{p}$ of special type,
$p\in\mathcal{C}$, the stabilizers $G_{p}$ of such sections are
conjugate in $\mathrm{Aut}(V_{5}$). Thus, every rational normal quintic
curve $\Gamma$ with infinite stabilizer $G_{\Gamma}$ belongs to
the $\mathrm{Aut}(V_{5})$-orbit of precisely one of the curves in
Corollary \ref{cor:T-stable-quintics} and Corollary \ref{cor:Ga-stable-quintics}.
Moreover, given hyperplane sections $F_{p}$ and $F_{p'}$ singular
along lines of special types $\ell_{p}$ and $\ell_{p'}$ through
points $p,p'\in\mathcal{C}$, every automorphism of $V_{5}$ mapping
$p$ to $p'$ induces in a canonical way an isomorphism between the
blown-up surfaces $\hat{\mathfrak{F}}_{[\ell_{p}]}$ and $\hat{\mathfrak{F}}_{[\ell_{p'}]}$
which restricts to an isomorphism $\mathfrak{E}_{[\ell_{p}]}\to\mathfrak{E}_{[\ell_{p'}]}$
mapping the triple of points $(q_{s_{[\ell_{p}]}},q_{f_{[\ell_{p}]}},\delta_{[\ell_{p}]})$
onto the triple $(q_{s_{[\ell_{p'}]}},q_{f_{[\ell_{p'}]}},\delta_{[\ell_{p'}]})$
and the intersection point $\gamma_{p}$ of the proper transform of
$\Upsilon_{p}$ in $\hat{\mathfrak{F}}_{[\ell_{p}]}$ with $\mathfrak{E}_{[\ell_{p}]}$
onto the intersection point $\gamma_{p'}$ of the proper transform
of $\Upsilon_{p'}$ in $\hat{\mathfrak{F}}_{[\ell_{p'}]}$ with $\mathfrak{E}_{[\ell_{p'}]}$.
Thus, to fully determine the parameter space for rational normal quintic
curves $\Gamma\subset V_{5}$ with stabilizer equal to a maximal torus
of $\mathrm{Aut}(V_{5})$ up to the action of $\mathrm{Aut}(V_{5})$,
it suffices to choose a $G_{p}$-equivariant model for $F_{p}$ and
its normalization $\nu:\mathfrak{F}_{[\ell_{p}]}\to F_{p}$, to fix
an isomorphism $\xi:\mathbb{P}^{1}\to\mathfrak{E}_{[\ell_{p}]}$ mapping
the triple of points $(0,\infty,1)$ onto the triple of points $(q_{s_{[\ell_{p}]}},q_{f_{[\ell_{p}]}},\delta_{[\ell_{p}]})$
and to determine the point $\xi^{-1}(\gamma_{p})$. This is done in
the following example. 
\begin{example}
\label{exa:Explicit-model-Take-1} Let $F_{p}\subset\mathbb{P}_{[w_{0}:\cdots:w_{5}]}^{5}$
be the hyperplane section of $V_{5}$ singular along a line of special
type $\ell_{p}$ of $V_{5}$ and let $\Gamma_{p}\subset F_{p}$ be
the special rational normal quintic curve in $F_{p}$, see Lemma \ref{lem:non-normal-section-charac}.
To describe the normalization morphism $\nu:\mathfrak{F}_{\ell_{p}}\cong\mathbb{F}_{3}\to F_{p}$
in Proposition \ref{prop:Normalization-Special-Hyperplane-Section},
we endow $\mathbb{F}_{3}$ with bi-homogeneous coordinates $([x_{0}:x_{1}],[y_{0}:y_{1}])$,
where $\deg x_{0}=(1,0)$, $\deg x_{1}=(1,0)$, $\deg y_{0}=(-3,1)$
and $\deg y_{1}=(0,1)$, see e.g. \cite[Chapter 2]{Re97}, for which
$\rho_{3}:\mathbb{F}_{3}\to\mathbb{P}^{1}$ coincides with the projection
$([x_{0}:x_{1}],[y_{0}:y_{1}])\mapsto[x_{0}:x_{1}]$ and the exceptional
section with self-intersection $-3$ with the curve $s_{0}=\{y_{0}=0\}$.
We put $f_{0}=\{x_{0}=0\}=\rho_{3}^{-1}([0:1])$ and $\mathfrak{p}=s_{0}\cap f_{0}=([0:1],[0:1])$.
Since $\mathrm{Aut}(\mathbb{P}^{5})$ acts transitively on pairs consisting
of a rational normal quintic and a point of it, we can assume without
loss of generality that $\Gamma_{p}$ is the image $C_{5}$ of the
embedding $[x_{0}:x_{1}]\mapsto[x_{1}^{5}:x_{1}^{4}x_{0}:x_{1}^{3}x_{0}^{2}:x_{1}^{2}x_{0}^{3}:x_{1}x_{0}^{4}:x_{0}^{5}]$
of $\mathbb{P}^{1}$ and that $p=[1:0:0:0:0:0]\in\Gamma_{p}$ which
gives, since $\ell_{p}$ is the tangent line to $\Gamma_{p}$ at $p$,
that $\ell_{p}=\{w_{2}=\cdots=w_{5}=0\}\cong\mathbb{P}_{[w_{0}:w_{1}]}^{1}$.
We can assume further that the restriction isomorphisms $\nu|_{s_{0}}:s_{0}\cong\mathbb{P}_{[x_{0}:x_{1}]}^{1}\to\ell_{p}$
and $\nu|_{f_{0}}:f_{0}\cong\mathbb{P}_{[y_{0}:y_{1}]}^{1}\to\ell_{p}$
are given by $(\nu|_{s_{0}})^{*}w_{0}=x_{0}$, $(\nu|_{s_{0}})^{*}w_{1}=x_{1}$,
$(\nu|_{f_{0}})^{*}w_{0}=y_{0}$, $(\nu|_{f_{0}})^{*}w_{1}=y_{1}$.
Then, with the notation of Proposition \ref{prop:Normalization-Special-Hyperplane-Section}
and Remark \ref{rem:equalizer}, $\nu^{*}H^{0}(F_{p},\omega_{F_{p}}^{\vee})$
is the kernel of the homomorphism 
\[
(\nu|_{s_{0}}^{-1})^{*}\circ\mathrm{res}s_{0}-(\nu|_{f_{0}}^{-1})^{*}\circ\mathrm{res}f_{0}:H^{0}(\mathbb{F}_{3},\nu^{*}\omega_{F_{p}}^{\vee})\longrightarrow H^{0}(\ell_{p},\omega_{\ell_{p}}^{\vee}\otimes\mathcal{I}_{p}|_{\ell_{p}})^{\oplus2}
\]
Using the basis $\{x_{1}y_{1},x_{0}y_{1},x_{1}^{4}y_{0},x_{0}x_{1}^{3}y_{0},x_{0}^{2}x_{1}^{2}y_{0},x_{0}^{3}x_{1}y_{0},x_{0}^{4}y_{0}\}$
of $H^{0}(\mathbb{F}_{3},\nu^{*}\omega_{F_{p}}^{\vee})=H^{0}(\mathbb{F}_{3},\mathcal{O}_{\mathbb{F}_{3}}(s_{0}+4f_{0}))$
and $\{w_{0},w_{1}\}$ of $H^{0}(\ell_{p},\omega_{\ell_{p}}^{\vee}\otimes\mathcal{I}_{p}|_{\ell_{p}})\cong H^{0}(\ell_{p},\mathcal{O}_{\ell_{p}}(1))$,
it is straightforward to check that under our assumptions, $\nu^{*}H^{0}(F_{p},\omega_{F_{p}}^{\vee})$
equals the subspace $W'$ of $H^{0}(\mathbb{F}_{3},\nu^{*}\omega_{F_{p}}^{\vee})$
spanned by $\{x_{1}y_{1},x_{0}y_{1}+x_{1}^{4}y_{0},x_{0}x_{1}^{3}y_{0},x_{0}^{2}x_{1}^{2}y_{0},x_{0}^{3}x_{1}y_{0},x_{0}^{4}y_{0}\}$. 

Let $G=\mathrm{Spec}(k[a,\lambda^{\pm1}])=\mathbb{G}_{a}\rtimes\mathbb{G}_{m}$
with the maximal torus $T=\{a=0\}\cong\mathrm{Spec}(k[\lambda^{\pm1}])$
and group law given by $(a',\lambda')\cdot(a,\lambda)=(a+\lambda a',\lambda^{'}\lambda)$.
The unique faithful action of $G$ on $\mathbb{F}_{3}$ for which
$s_{0}$ and $f_{0}$ are $G$-stable with induced faithful actions
fixing $\mathfrak{p}=s_{0}\cap f_{0}$ and such that the sections
$s_{\infty}=\{y_{1}=0\}$ and the fiber $f_{\infty}=\{x_{1}=0\}$
are $T$-stable is given by the following formula
\begin{equation}
(a,\lambda)\cdot([x_{0}:x_{1}],[y_{0}:y_{1}])=([\lambda x_{0}:x_{1}+ax_{0}],[\lambda y_{0}:y_{1}+(ax_{1}^{3}+\frac{3}{2}a^{2}x_{0}x_{1}^{2}+a^{3}x_{0}^{2}x_{1}+\frac{1}{4}a^{4}x_{0}^{3})y_{0}]).\label{eq:Explicit-Action-Formula}
\end{equation}
It is straightforward to check that the unique irreducible $G$-stable
curve in the complete linear system $|s_{0}+4f_{0}|$ is 
\begin{equation}
\Upsilon_{p}=\{4x_{0}y_{1}-x_{1}^{4}y_{0}=0\}.\label{eq:Upsilon_p}
\end{equation}
Moreover, $W'$ is an irreducible $G$-module and the $G$-equivariant
morphism
\[
\psi:\mathbb{F}_{3}\to\mathbb{P}(W')=\mathbb{P}_{[w_{0}:\cdots:w_{5}]}^{5},([x_{0}:x_{1}],[y_{0}:y_{1}])\mapsto[x_{1}y_{1}:\frac{4}{5}(x_{0}y_{1}+x_{1}^{4}y_{0}):x_{0}x_{1}^{3}y_{0}:x_{0}^{2}x_{1}^{2}y_{0}:x_{0}^{3}x_{1}y_{0}:x_{0}^{4}y_{0}],
\]
maps $s_{0}$ and $f_{0}$ isomorphically onto $\ell_{p}$, restricts
to an isomorphism between $\mathbb{F}_{3}\setminus(s_{0}\cup f_{0})$
and its image and maps $\Upsilon_{p}$ isomorphically onto $C_{5}$.
The induced morphism $\nu:\mathbb{F}_{3}\to F_{p}$ is the normalization
of $F_{p}$ and we now consider it as our model of the normalization
$\nu:\mathfrak{F}_{[\ell_{p}]}\to F_{p}$ in Proposition \ref{prop:Normalization-Special-Hyperplane-Section}
for which $s_{[\ell_{p}]}=s_{0}$ and $f_{[\ell_{p}]}=f_{0}$. Note
that with the choices of bases above, $\psi:\mathbb{F}_{3}\to\mathbb{P}(W')$
is displayed as the composition of the $G$-equivariant closed embedding
$\mathbb{F}_{3}\to\mathbb{P}(H^{0}(\mathbb{F}_{3},\nu^{*}\omega_{F_{p}}^{\vee}))\cong\mathbb{P}^{6}$
followed by the $G$-equivariant linear projection $\mathbb{P}(H^{0}(\mathbb{F}_{3},\mathcal{O}_{\mathbb{F}_{3}}(s_{0}+4f_{0})))\dashrightarrow\mathbb{P}(W')$. 

The open neighborhood $U=\mathfrak{F}_{[\ell_{p}]}\setminus\{x_{1}y_{1}=0\}$
of the point $\mathfrak{p}=s_{0}\cap f_{0}=([0:1],[0:1])$ is isomorphic
to $\mathbb{A}^{2}$, with affine coordinates $x=x_{0}x_{1}^{-1}$
and $y=x_{1}^{3}y_{0}y_{1}^{-1}$ and we have $s_{0}\cap U=\{y=0\}$,
$f_{0}\cap U=\{x=0\}$ and $\Upsilon_{p}\cap U=\{4x-y=0\}$. Choosing
coordinates $[z_{0}:z_{1}]$ on $\mathbb{P}^{1}$ so that the restriction
of the blow-up $\alpha_{\mathfrak{p}}:\hat{\mathfrak{F}}_{[\ell_{p}]}\to\mathfrak{F}_{[\ell_{p}]}$
over $U$ is isomorphic to the subvariety $Z=\{xz_{1}+yz_{0}=0\}$
in $\mathbb{A}^{2}\times\mathbb{P}_{[z_{0}:z_{1}]}^{1}$ endowed with
the restriction $\alpha:Z\to\mathbb{A}^{2}$ of the first projection
provides an isomorphism $\xi:\mathbb{P}_{[z_{0}:z_{1}]}^{1}\to\mathfrak{E}_{[\ell_{p}]}$
which maps the triple of points $(0,\infty,1)=([0:1],[1:0],[1:1])$
onto the triple of points $(q_{s_{[\ell_{p}]}},q_{f_{[\ell_{p}]}},\delta_{[\ell_{p}]})$
and such that $\xi^{-1}(\gamma_{p})=[-4:1].$ 

With our choices of models and coordinates, the $T$-stable rational
normal quintics $\Gamma^{T}(v)\subset F_{p}$ of Proposition \ref{prop:T-stable-quintics}
are the images by $\nu:\mathfrak{F}_{[\ell_{p}]}\to F_{p}$ of the
curves 
\begin{equation}
\Upsilon^{T}(v)=\{vx_{0}y_{1}+x_{1}^{4}y_{0}=0\},\quad v\in\mathbb{P}^{1}\setminus\{0,1,\infty\}\label{eq:Gamma-Gm}
\end{equation}
with $\Upsilon^{MU}:=\Upsilon^{T}(-4)=\Upsilon_{p}$. We set in addition
\begin{equation}
\Gamma^{MU}:=\nu_{*}\Upsilon^{MU}=\Gamma_{p}.\label{eq:Gamma-MU}
\end{equation}

The $G_{p}^{a}$-stable rational normal quintics $\Gamma^{a}(v)\subset F_{p}$
of Proposition \ref{prop:Ga-stable-quintics} are the images of the
curves
\begin{equation}
\Upsilon^{a}(v)=\{4x_{0}y_{1}-x_{1}^{4}y_{0}+vx_{0}^{4}y_{0}=0\},\quad v\in k,\label{eq:Gamma-Ga}
\end{equation}
with $\Upsilon^{a}(0)=\Upsilon_{p}$, the curves $\Upsilon^{a}(v)$
with $v\in k^{*}$ having stabilizer $\mathbb{G}_{a}\rtimes\mu_{4}=\mathrm{Spec}(k[a,\lambda^{\pm1}]/(\lambda^{4}))$. 
\end{example}

In sum, we obtain the following classification: 
\begin{thm}
\label{thm:Quintics-Main-Theorem} The equivalence classes of pairs
$(V_{5},\Gamma)$ where $\Gamma\subset V_{5}$ is a rational normal
quintic curve with infinite stabilizer $G_{\Gamma}$ up to the action
of $\mathrm{Aut}(V_{5})$ are the following: 

1) The class of $(V_{5},\Gamma^{MU})$, where $G_{\Gamma^{MU}}$ is
a Borel subgroup of $\mathrm{Aut}(V_{5})$.

2) The class of $(V_{5},\Gamma^{a})$, where $\Gamma^{a}\neq\Gamma^{MU}$
and $G_{\Gamma^{a}}$ contains a non-trivial unipotent subgroup of
$\mathrm{Aut}(V_{5})$. Moreover, $G_{\Gamma^{a}}\cong\mathbb{G}_{a}\rtimes\mu_{4}$,
where the action of $\mu_{4}$ on $\mathbb{G}_{a}$ is given by the
natural injective homomorphism $\mu_{4}\to\mathrm{Aut}(\mathbb{G}_{a})\cong\mathbb{G}_{m}$. 

3) The class of $(V_{5},\Gamma^{m}(v))$, where $\Gamma^{m}(v)$ is
a member of a family of rational normal quintic curves $\Gamma^{m}(v)$,
$v\in\mathbb{P}^{1}\setminus\{0,1,\infty,-4\}$ whose stabilizers
$G_{\Gamma^{m}(v)}$ are all equal to a same maximal torus of $\mathrm{Aut}(V_{5})$. 
\end{thm}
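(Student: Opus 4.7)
The plan is to assemble the theorem from the preceding results. First, via the identification $c_{\mathcal{C}}$ the group $\mathrm{Aut}(V_{5})$ acts transitively on $\mathcal{C}$, hence on the set of non-normal hyperplane sections $\{F_{p}\}_{p\in\mathcal{C}}$; combined with Lemma \ref{prop:RationalQuintic-inifnite-stabilizer}, this reduces the classification to that of rational normal quintic curves $\Gamma\subset F_{p}$ with infinite stabilizer modulo the action of the Borel group $G_{p}=\mathrm{Aut}(V_{5},F_{p})\cong\mathbb{G}_{a}\rtimes\mathbb{G}_{m}$. The neutral component $G_{\Gamma}^{0}\subset G_{p}$ is then either a maximal torus, the unipotent radical $G_{p}^{a}$, or all of $G_{p}$. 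The torus and fully-stable cases are handled by Corollary \ref{cor:T-stable-quintics}, producing class 1) (the single orbit of the $G_{p}$-stable curve $\Gamma^{MU}=\Gamma_{p}$, with stabilizer the Borel group $G_{p}$) and a one-parameter family of orbits to be refined for class 3). The unipotent case is handled by Corollary \ref{cor:Ga-stable-quintics}, which gives class 2), with stabilizer $\mathbb{G}_{a}\rtimes\mu_{4}$; the semidirect structure, with $\mu_{4}\hookrightarrow\mathrm{Aut}(\mathbb{G}_{a})\cong\mathbb{G}_{m}$ the natural embedding by fourth roots of unity, is exactly what is computed from the degree-$4$ ramification of $\xi\colon s_{T}\to\mathbb{P}(V_{5\mathfrak{p}})$ in the proof of Proposition \ref{prop:Ga-stable-quintics}. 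These three classes are pairwise disjoint because the isomorphism types of their neutral components $G_{\Gamma}^{0}$ differ.

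To pin down the parameter in class 3) I would appeal to the explicit model of Example \ref{exa:Explicit-model-Take-1}. In that model the pencil of Proposition \ref{prop:T-stable-quintics} is $\{\Upsilon^{T}(v)\}_{v\in\mathbb{P}^{1}}$, and the three pencil points excluded by that proposition, namely $q_{s_{[\ell_{p}]}}$, $q_{f_{[\ell_{p}]}}$ and $\delta_{[\ell_{p}]}$, correspond under the chosen $\xi$ to $v=0,\infty,1$. The distinguished point $\gamma_{p}$ where the proper transform of $\Upsilon_{p}$ meets the exceptional divisor corresponds to $v=-4$, so the image $\Gamma^{MU}=\nu_{*}\Upsilon_{p}$ already lies in class 1). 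Hence the parameter space for class 3) is $\mathbb{P}^{1}\setminus\{0,1,\infty,-4\}$.

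The only step requiring genuine work is checking that for distinct values $v\neq v'$ in $\mathbb{P}^{1}\setminus\{0,1,\infty,-4\}$ the pairs $(V_{5},\Gamma^{m}(v))$ and $(V_{5},\Gamma^{m}(v'))$ are not $\mathrm{Aut}(V_{5})$-equivalent. The argument I would give is the following: any such equivalence descends to an element $g\in G_{p}$ mapping $\Gamma^{m}(v)$ to $\Gamma^{m}(v')$ and thus conjugating their stabilizers into one another. By Proposition \ref{prop:T-stable-quintics}~c), both stabilizers coincide with the one specific maximal torus $T\subset G_{p}$ used to construct the pencil, so $g$ lies in the normalizer $N_{G_{p}}(T)$. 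A direct computation in the coordinates $(a,\lambda)$ on $\mathbb{G}_{a}\rtimes\mathbb{G}_{m}$ yields $(a,\lambda)(0,\mu)(a,\lambda)^{-1}=(a(1-\mu),\mu)$, so $N_{G_{p}}(T)=T$; hence $g\in T$, and since every member of the pencil is $T$-stable this forces $v=v'$. Given how much has been prepared in the earlier propositions, this self-normalization step is the only subtle point in the proof.
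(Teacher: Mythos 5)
Your proposal is correct and follows the paper's architecture almost exactly: reduction to a single non-normal section $F_{p}$ via transitivity of $\mathrm{Aut}(V_{5})$ on $\mathcal{C}$ together with Lemma \ref{prop:RationalQuintic-inifnite-stabilizer}, the trichotomy on $G_{\Gamma}^{0}\subset G_{p}$, Corollaries \ref{cor:T-stable-quintics} and \ref{cor:Ga-stable-quintics} for the three classes, and the explicit model of Example \ref{exa:Explicit-model-Take-1} to normalize the parameter to $\mathbb{P}^{1}\setminus\{0,1,\infty,-4\}$. The one point where you genuinely diverge is the step you rightly single out as the crux, namely the pairwise inequivalence of the curves $\Gamma^{m}(v)$. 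The paper's (largely implicit) argument, set up in the paragraph preceding Example \ref{exa:Explicit-model-Take-1}, is projective-geometric: since $\mathfrak{E}_{[\ell]}$ is a section of the pencil by Proposition \ref{prop:T-stable-quintics} a) and any equivalence preserves the three canonically defined points $q_{s_{[\ell]}},q_{f_{[\ell]}},\delta_{[\ell]}$, it acts trivially on $\mathfrak{E}_{[\ell]}\cong\mathbb{P}^{1}$ and therefore fixes the normalized coordinate $v$. Your argument is group-theoretic: an equivalence must preserve the unique hyperplane section containing the curve, hence lies in $\mathrm{Aut}(V_{5},F_{p})=G_{p}$ by Lemma \ref{lem:non-normal-section-charac}; it conjugates the stabilizer of $\Gamma^{m}(v)$, which equals the fixed torus $T$ by Proposition \ref{prop:T-stable-quintics} c), to that of $\Gamma^{m}(v')$, which is the same $T$; and $N_{G_{p}}(T)=T$ then forces it to lie in $T$ and so to fix every member of the pencil. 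Both arguments are sound; yours is shorter and sidesteps any discussion of how the marked points on $\mathfrak{E}_{[\ell]}$ transform, while the paper's version also handles the comparison across different base points $p,p'$ uniformly (which you recover separately from the transitivity reduction). A minor quibble: with the paper's group law $(a',\lambda')\cdot(a,\lambda)=(a+\lambda a',\lambda'\lambda)$ the conjugate is $(a,\lambda)(0,\mu)(a,\lambda)^{-1}=(a\lambda^{-1}(\mu-1),\mu)$ rather than $(a(1-\mu),\mu)$, but this convention-dependent discrepancy does not affect the conclusion $N_{G_{p}}(T)=T$.
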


\begin{rem}
Our parameter space $\mathbb{P}^{1}\setminus\{0,1,\infty,-4\}$ for
equivalence classes of rational normal quintic curves in $V_{5}$
whose stabilizer is a maximal torus differs from that $\mathbb{P}^{1}\setminus\{0,1,\frac{1}{5},\infty\}$
constructed by a different approach in \cite{KPS18}. Nevertheless,
we leave to the reader to check that the automorphism $\mathbb{P}^{1}\ni v\mapsto u^{4}=\frac{v}{4+v}\in\mathbb{P}^{1}$,
induces a one-to-one correspondence between the rational quintic curves
$\Gamma^{m}(v)$, $v\in\mathbb{P}^{1}\setminus\{0,1,\infty,-4\}$
and the rational normal quintic curves $Z_{m}(u^{4})$, $u^{4}\in\mathbb{P}_{u^{4}}^{1}\setminus\{0,1,\frac{1}{5},\infty\}$,
of \cite[Example 5.3.4 and Remark 5.3.8]{KPS18}. 
\end{rem}

\section{\protect\label{sec:X_22-stuff}Application to smooth prime Fano threefolds
of degree $22$ with infinite automorphism groups}

Hereafter, we consider every smooth prime threefold $X$ of degree
$22$ as a subvariety of $\mathbb{P}^{13}$ via its anti-canonical
embedding $\varphi_{|\mathcal{\omega}_{X}^{\vee}|}:X\hookrightarrow\mathbb{P}(H^{0}(X,\omega_{X}^{\vee}))\cong\mathbb{P}^{13}$.
A line in $X$ is a line $Z\cong\mathbb{P}^{1}$ in $\mathbb{P}^{13}$
which is contained in $X$. By \cite[Lemma 1]{Isk89}, the conormal
sheaf $\mathcal{C}_{Z/X}$ of a line $Z$ in $X$ is isomorphic either
to $\mathcal{O}_{Z}\oplus\mathcal{O}_{Z}(1)$, in which case $Z$
is said to be of general type, or to $\mathcal{O}_{Z}(-1)\oplus\mathcal{O}_{Z}(2)$,
in which case $Z$ is said to be of special type. The Hilbert scheme
$\mathcal{H}_{X}$ of lines in $X$ is of pure dimension one with
singular locus supported by the subset of lines of special type \cite[Proposition 1]{Isk89}. 

\subsection{Double projection from a line}

Given a line $Z$ in a smooth prime Fano threefold $X$ of degree
$22$ the linear system $|\omega_{X}^{\vee}\otimes{\mathscr{I}}_{Z}^{2}|$
of hyperplane sections of $X$ singular along $Z$ defines a rational
map $\psi_{Z}:X\dashrightarrow\mathbb{P}^{6}$ with image equal to
the quintic del Pezzo threefold $V_{5}$, called the \emph{double
projection from} $Z$, which can be described as follows: 
\begin{thm}
\label{thm:sarkisov} There exists a Sarkisov link 
\[
\xymatrix{ &  & F'\subseteq X'\ar@{.>}[r]^{\chi}\ar@{->}[lld]_{\sigma} & X^{+}\supseteq S^{+}\ar@{->}[rrd]^{\tau}\\
Z\subset S\subset X\ar@{-->}[rrrrr]^{\psi_{Z}} &  &  &  &  & V_{5}\supset F\supset\Gamma
\\
}
\]
where $\sigma:X'\to X$ is the blow-up with center $Z$ and exceptional
divisor $F'$, $\chi:X'\dasharrow X^{+}$ is a flop (that is not identity),
$\tau:X^{+}\to V_{5}$ is the contraction of the proper transform
$S^{+}$ of the unique hyperplane section $S\in|\omega_{X}^{\vee}\otimes{\mathscr{I}}_{Z}^{3}|$
to a rational normal quintic curve $\Gamma$ on the quintic del Pezzo
threefold $V_{5}$. The birational inverse $\varphi_{\Gamma}$ of
$\psi_{Z}$ is given by the linear system of cubic hypersurfaces of
$V_{5}$ singular along $\Gamma$. Moreover, the following hold:

$\quad$ a) The flopping locus of $\chi$ is the union of the proper
transforms by $\sigma$ of lines on $X$ intersecting $Z$, and of
the exceptional section of  the divisor $F'$ if the line $Z$ is
of special type and its flopped locus is the union of the proper transforms
by $\tau$ of the lines on $V_{5}$ which are bisecant to $\Gamma$. 

$\quad$ b) The proper transform $F$ of $F'$ on $V_{5}$, which
is the unique hyperplane section of $V_{5}$ containing $\Gamma$,
is a normal surface if and only if $Z$ is of general type.
\end{thm}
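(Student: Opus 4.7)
The plan is to construct the link by running a $K$-negative Mori program on the blow-up $\sigma\colon X'\to X$ of $Z$, following Iskovskikh's original approach in \cite{Isk89} with the guidance of the two linear systems
\[
\mathcal{M}:=|\omega_X^{\vee}\otimes{\mathscr{I}}_Z^{2}|\quad\text{and}\quad\mathcal{N}:=|\omega_X^{\vee}\otimes{\mathscr{I}}_Z^{3}|.
\]
The first step is the standard numerical preparation. Since $Z\subset X$ is a smooth rational curve of degree $1$ in a smooth Fano threefold with $-K_X\cdot Z=2$, one has $K_{X'}=\sigma^{*}K_X+F'$ and $(-K_{X'})^3=(-K_X)^3-2((-K_X)\cdot Z)-2(2g(Z)-2)-2=22-4+4-2=20$, so $X'$ is weak Fano of Picard rank $2$. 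Using the Koszul resolution of $\mathscr{I}_Z^2$ one computes $h^{0}(\omega_X^{\vee}\otimes{\mathscr{I}}_Z^{2})=7$ and $h^{0}(\omega_X^{\vee}\otimes{\mathscr{I}}_Z^{3})=1$, which places the image of $\psi_Z$ in $\mathbb{P}^6$ and singles out a unique divisor $S\in\mathcal{N}$.

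Next, I would analyze the Mori cone $\overline{NE}(X')$. One extremal ray is $R_1=\mathbb{R}_{\geq 0}[f]$ where $f$ is a fiber of $\sigma|_{F'}\colon F'\to Z$, contracted by $\sigma$. For the other ray $R_2$, I would pair $-K_{X'}-F'=\sigma^*\mathcal{M}$ against proper transforms $\widetilde{\ell}$ of lines $\ell\subset X$ meeting $Z$, showing $(-K_{X'}-F')\cdot\widetilde{\ell}=0$ and $(-K_{X'})\cdot\widetilde{\ell}>0$, so that such curves span the interior of the second ray. The contraction of $R_2$ is small (its locus is the finite union of these $\widetilde{\ell}$ together with the exceptional $(-3)$-section of $F'$ in the special case, where $\mathcal{C}_{Z/X}\cong\mathcal{O}_Z(-1)\oplus\mathcal{O}_Z(2)$ gives $F'\cong\mathbb{F}_3$). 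Running the $(K_{X'}+\varepsilon F')$-MMP then produces the flop $\chi\colon X'\dashrightarrow X^{+}$, and the second contraction of $X^+$ along the transformed $(-K_{X'}-F')$-trivial ray is the morphism $\tau\colon X^{+}\to V_5$ defined by the (now base-point-free) proper transform of $\mathcal{M}$. Standard classification of Fano threefolds of degree $5$ \cite{Fuj81,Isk77} identifies $V_5$ as the quintic del Pezzo threefold. The exceptional divisor of $\tau$ must be the proper transform $S^{+}$ of $S$, since $S$ is the only member of $\mathcal{N}$ and its image $\tau(S^+)=\Gamma$ is necessarily a curve by dimension reasons; the flopped curves are by duality the $\tau$-exceptional curves over points of $\Gamma$, which are precisely the proper transforms of lines on $V_5$ bisecant to $\Gamma$.

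To finish, I would identify $\Gamma$ as a rational normal quintic by intersection theory on $X^{+}$. Writing $\tau^{*}\omega_{V_5}^{\vee}=-K_{X^+}+aS^+$ with $a>0$ determined by $(-K_{X^+})\cdot(S^+\text{-fiber})=0$, the computation $\deg\Gamma=\tau_{*}S^{+}\cdot\mathcal{O}_{V_5}(1)$ expressed via the projection formula on $X^+$ is pushed back through the flop to $X'$, where it amounts to $\frac{1}{2}(\sigma^{*}\mathcal{M})^{2}\cdot S'$ with $S'=\sigma^*S-3F'$, yielding $5$. Rationality of $\Gamma$ follows from that of $S$, hence of $S^+$. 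The inverse $\varphi_\Gamma$ is then the Sarkisov link read in the opposite direction: the linear system $|\tau^{*}\omega_{V_5}^{\vee\otimes 3}\otimes{\mathscr{I}}_\Gamma^{2}|$ pulls back to $|-K_{X^+}-S^+|$ on $X^+$, which becomes $|\sigma^{*}\omega_X^{\vee}|$ on $X'$ through $\chi$, i.e.\ the anti-canonical linear system of $X$. Finally, $F$ (the image in $V_5$ of $F'$) is the birational image of $F'$ under the sequence $\chi$ followed by $\tau$; in the general type case $F'\cong\mathbb{F}_1$ has no curve in the flopping locus except the fiber classes, so $F\to F'$ is an isomorphism and $F$ is smooth; in the special case the exceptional $(-3)$-section of $F'\cong\mathbb{F}_3$ lies in the flopping locus and is contracted, producing a non-normal point on $F$.

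The main obstacle in this strategy is verifying cleanly that the second extremal ray on $X'$ is small, with flopping locus consisting exactly of the proper transforms of lines through $Z$ (plus, in the special case, the $(-3)$-section of $F'$); this requires controlling the finitely many such lines using Proposition 1 of \cite{Isk89} and ruling out divisorial contractions via the Picard-rank computation $\rho(X')=2$ combined with the numerical data above. Once the flop exists, the identification of $\tau$ as a divisorial contraction to the quintic del Pezzo threefold and the characterization of normality of $F$ are consequences of the explicit intersection numbers and the geometry of $\mathbb{F}_n$ surfaces.
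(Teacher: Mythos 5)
Your overall strategy for the construction of the link and for part a) (run a $K$-negative two-ray game on the blow-up $\sigma:X'\to X$, guided by the linear systems $|\omega_X^\vee\otimes\mathscr{I}_Z^2|$ and $|\omega_X^\vee\otimes\mathscr{I}_Z^3|$) is essentially Iskovskikh's, and the paper simply cites \cite{Isk89,Pr92} and \cite[Lemma 5.25]{KPS18} for these assertions rather than reproving them; your sketch is a reasonable outline of that route, although your numerical preparation contains slips (in the anti-canonical embedding of $X$ a line has $-K_X\cdot Z=1$, not $2$, and your formula for $(-K_{X'})^3$ fails already for the blow-up of a line in $\mathbb{P}^3$, where it gives $58$ instead of $54$; the correct value here is $(-K_X)^3-2(-K_X\cdot Z)-2+2g(Z)=18$). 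These are repairable.

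The genuine gap is in part b), which is the only assertion the paper actually proves, and both directions of your argument are incorrect. For the implication ``$Z$ of general type $\Rightarrow F$ normal'' you assert that $F\to F'$ is an isomorphism and $F$ is smooth; but $F$ is a quintic hyperplane section of $V_5$, i.e.\ a (possibly singular, possibly non-normal) del Pezzo surface of degree $5$, which cannot be isomorphic to $F'\cong\mathbb{F}_1$, and indeed both the flop (whose flopping curves meet $F'$) and the contraction $\tau$ modify $F'$ nontrivially. The paper's argument is indirect: a general fiber of $\sigma|_{F'}$ maps to a smooth conic $C\subset F$, and if $F$ were non-normal its normalization would be $\mathbb{F}_a$ ($a\in\{1,3\}$) by \cite{FT92}, on which the pullback of $C$ would be a movable curve of degree $2$ against $s_a+\tfrac12(5+a)f_a$ --- impossible. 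For the converse you claim that contracting the $(-3)$-section of $F'\cong\mathbb{F}_3$ ``produces a non-normal point''; this is false: contracting a $(-3)$-curve yields a \emph{normal} surface with a $\tfrac13(1,1)$ quotient singularity (this is exactly the midpoint $\bar F\cong\mathbb{P}(1,1,3)$ in the paper's proof, which is normal), whereas $F$ is in fact non-normal along an entire line (cf.\ Lemma \ref{lem:non-normal-section-charac}). The paper instead argues by contradiction: if $F$ were normal then $F^+$ would be normal and, being crepant over $F$ which has canonical singularities by \cite{HW}, would itself have canonical singularities; since $-K_{F^+}$ is $\phi^+|_{F^+}$-ample, $\bar F$ would then have canonical singularities, contradicting $\bar F\cong\mathbb{P}(1,1,3)$. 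You would need an argument of this kind (or some other mechanism actually producing non-normality along a curve) to close part b).
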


\begin{proof}
All assertions but b) can be found in \cite{Isk89,Pr92} and in \cite[Lemma 5.25]{KPS18}
for the description of the flopping and flopped locus of $\chi$.
Let $\bar{X}$ be the midpoint of the flop $\chi:X'\dasharrow X^{+}$,
let $\phi:X'\to\bar{X}$ and $\phi^{+}:X^{+}\to\bar{X}$ be the respective
small anti-canonical contractions so that $\chi=(\phi^{+})^{-1}\circ\phi$,
and let $\bar{F}=\phi_{*}F'=\phi_{*}^{+}F^{+}$, where $F^{+}=\tau_{\ast}^{-1}F$.
The morphism $\sigma|_{F'}:F'=\mathbb{P}(\mathcal{C}_{Z/X})\to Z$
identifies $F'$ with the Hirzebruch surface $\rho_{e}:\mathbb{F}_{e}\to\mathbb{P}^{1}$
with $e=1$ if $Z$ is of general type and $e=3$ otherwise. By \cite[Proposition 3 (ii)-(iii)]{Isk89},
the morphism $\phi$ is given by the complete linear system $|-K_{X'}|$
and 
\begin{equation}
\bar{F}\cong\begin{cases}
\mathbb{F}_{1} & \textrm{if }Z\textrm{ is of general type}\\
\mathbb{P}(1,1,3) & \textrm{if }Z\textrm{ is of special type.}
\end{cases}\label{eq:F-bar}
\end{equation}
If $Z$ is of general type, it follows from assertion a) that the
proper transform in $V_{5}$ of a general fiber $f'$ of $\sigma|_{F'}$
is a nonsingular conic $C$ contained in $F$, which implies that
$F$ is normal. Indeed, otherwise, by \cite{FT92}, the normalization
$\nu:\tilde{F}\to F$ of $F$ would be isomorphic to a Hirzebruch
surface $\rho_{a}:\mathbb{F}_{a}\to\mathbb{P}^{1}$ for some $a\in\{1,3\}$
and the proper transform $\nu^{*}(H)$ of a general hyperplane section
$H$ of $F$ would be linearly equivalent to $s_{a}+\tfrac{1}{2}(5+a)f_{a}$.
But then $\nu^{*}(C)$ would be an irreducible and reduced movable
(hence nef) curve in $\mathbb{F}_{a}$ such that $\nu^{*}(C)\cdot\nu^{*}(H)=2$,
which is impossible. 

Conversely, assume that $F$ is normal. Then $F^{+}$ is normal as
well. Indeed, $F^{+}$ being a Cartier divisor on the smooth threefold
$X^{+}$, it is Cohen-Macaulay. On the other hand, the normality of
$F$ and $\bar{F}$ implies that any curve $C^{+}\subseteq{\rm Sing}(F^{+})$
has to be contracted by $\tau|_{F^{+}}$ and $\phi^{+}|_{F^{+}}$,
which is impossible. Thus, $F^{+}$ is regular in codimension $1$,
hence normal. It follows from adjunction formula that $-K_{F^{+}}\sim(-K_{X^{+}}-F^{+})|_{F^{+}}\sim(\tau|_{F^{+}})^{\ast}(-K_{F})$.
The morphism $\tau|_{F^{+}}:F^{+}\to F$ is thus crepant and since
$F$ has canonical singularities by \cite{HW}, it follows in turn
that $F^{+}$ has canonical singularities. On the other hand, since
$\tau^{*}(-K_{V_{5}})$ is $\phi^{+}$-ample and $-K_{F^{+}}\sim-\frac{1}{2}\tau^{*}K_{V_{5}}|_{F^{+}}$,
it follows that $-K_{F^{+}}$ is $\phi^{+}|_{F^{+}}$-ample. This
implies that $\bar{F}$ has canonical singularities as well, whence,
by (\ref{eq:F-bar}), that $Z$ is of general type. 
\end{proof}
We record the following functoriality of the double projection from
a line (see also \cite[Remark 5.2.6]{KPS18}): 
\begin{cor}
\label{cor:Functoriality-double-projection}Let $X_{i}$, $i=1,2$,
be smooth prime Fano threefold of degree $22$ and let
\[
\psi_{Z_{i}}:X_{i}\dashrightarrow V_{5,i}\quad\textrm{and}\quad\varphi_{\Gamma_{i}}:V_{5,i}\dashrightarrow X_{i}\quad i=1,2
\]
be the mutually inverse birational maps associated to a line $Z_{i}$
on $X_{i}$ as in Theorem \ref{thm:sarkisov}. Then the map 
\[
\epsilon:\mathrm{Isom}((X_{1},Z_{1}),(X_{2},Z_{2}))\to\mathrm{Isom}((V_{5,1},\Gamma_{1}),(V_{5,2},\Gamma_{2})),\;h\mapsto h':=\psi_{Z_{2}}\circ h\circ\varphi_{\Gamma_{1}}
\]
 is a well-defined bijection with inverse $h'\mapsto\varphi_{\Gamma_{2}}\circ h'\circ\psi_{Z_{1}}$. 
\end{cor}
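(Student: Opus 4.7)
The plan is to exploit the fact that the double projection $\psi_{Z_i}$ is intrinsically associated to the pair $(X_i,Z_i)$: by construction in Theorem \ref{thm:sarkisov}, it is the rational map defined by the complete linear system $|\omega_{X_i}^{\vee}\otimes\mathscr{I}_{Z_i}^{2}|$. Given an isomorphism $h\colon(X_1,Z_1)\to(X_2,Z_2)$, pullback by $h$ identifies $H^0(X_2,\omega_{X_2}^{\vee}\otimes\mathscr{I}_{Z_2}^{2})$ with $H^0(X_1,\omega_{X_1}^{\vee}\otimes\mathscr{I}_{Z_1}^{2})$, yielding a unique projective linear isomorphism $\bar h$ of the two ambient $\mathbb{P}^6$'s with $\bar h\circ\psi_{Z_1}=\psi_{Z_2}\circ h$ as rational maps. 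Since each $\psi_{Z_i}$ factors through $V_{5,i}\subset\mathbb{P}^6$, $\bar h$ restricts to a regular isomorphism $h'\colon V_{5,1}\to V_{5,2}$, which by construction equals $\psi_{Z_2}\circ h\circ\varphi_{\Gamma_1}$ as birational maps.

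Next, I would check that $h'(\Gamma_1)=\Gamma_2$. By Theorem \ref{thm:sarkisov}, $\Gamma_i$ is the image by $\tau_i$ of the proper transform $S_i^{+}$ of the unique hyperplane section $S_i\in|\omega_{X_i}^{\vee}\otimes\mathscr{I}_{Z_i}^{3}|$, so $\Gamma_i$ is intrinsically determined by the pair $(X_i,Z_i)$. Each step of the Sarkisov link is itself canonical: the blow-up $\sigma_i$ is determined by $Z_i$, the flop $\chi_i$ by the flopping locus on $X_i'$ described in Theorem \ref{thm:sarkisov}\,a), and the divisorial contraction $\tau_i$ by the unique extremal ray along which $S_i^{+}$ is swept. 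Hence $h$ lifts through the link to an isomorphism of Sarkisov diagrams whose rightmost constituent is precisely $h'$ and which carries $\Gamma_1$ to $\Gamma_2$. This establishes that $\epsilon$ is well-defined.

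For bijectivity, I would run the same argument with the roles of $(X_i,Z_i)$ and $(V_{5,i},\Gamma_i)$ exchanged, using the fact that the birational inverse $\varphi_{\Gamma_i}$ is given by the complete linear system of cubic hypersurfaces of $V_{5,i}\subset\mathbb{P}^6$ singular along $\Gamma_i$. Thus any $h'\in\mathrm{Isom}((V_{5,1},\Gamma_1),(V_{5,2},\Gamma_2))$ lifts to an isomorphism $h:=\varphi_{\Gamma_2}\circ h'\circ\psi_{Z_1}\in\mathrm{Isom}((X_1,Z_1),(X_2,Z_2))$. Because $\psi_{Z_i}$ and $\varphi_{\Gamma_i}$ are mutually inverse birational maps, the two constructions simplify to the identity on each of the isomorphism sets.

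The main technical point is to verify that the birational map $\psi_{Z_2}\circ h\circ\varphi_{\Gamma_1}$ is actually a regular isomorphism of varieties rather than merely a birational one. The linear-systems reformulation above handles this cleanly: $h'$ is realized by the restriction to $V_{5,1}$ of a projective linear isomorphism of the ambient $\mathbb{P}^6$'s, hence is automatically regular. As a backup, one can instead track $h$ stepwise through the Sarkisov link — lifting first to an isomorphism $\tilde h\colon X_1'\to X_2'$ via the universal property of the blow-ups, then through the flops to $\tilde h^{+}\colon X_1^{+}\to X_2^{+}$ using the uniqueness of the flop given the flopping curves, and finally descending via the Mori contractions $\tau_i$ to the desired $h'$.
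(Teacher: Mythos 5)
Your proposal is correct and follows essentially the same route as the paper: both arguments use the functoriality of the defining linear system $|\omega_{X_i}^{\vee}\otimes\mathscr{I}_{Z_i}^{2}|$ to produce a projective linear isomorphism of the ambient spaces intertwining the double projections, identify $\Gamma_i$ via the uniqueness of $S_i\in|\omega_{X_i}^{\vee}\otimes\mathscr{I}_{Z_i}^{3}|$, and then apply the symmetric argument to the linear systems defining $\varphi_{\Gamma_i}$ to obtain the inverse. The extra "backup" discussion of lifting $h$ stepwise through the Sarkisov link is not needed, since the linear-system argument already yields regularity of $h'$ as the restriction of a linear automorphism.
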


\begin{proof}
An isomophism of pairs $h:(X_{1},Z_{1})\to(X_{2},Z_{2})$ induces
isomorphisms of $k$-vector spaces 
\[
h_{2}^{*}:H^{0}(X_{2},\omega_{X_{2}}^{\vee}\otimes{\mathscr{I}}_{Z_{2}}^{2})\to H^{0}(X_{1},\omega_{X_{1}}^{\vee}\otimes{\mathscr{I}}_{Z_{1}}^{2})
\]
By definition of the double projection, the rational map $\psi_{Z_{2}}\circ h$
is equal to $\mathbb{P}(h_{2}^{*})\circ\psi_{Z_{1}}$, where 
\[
\mathbb{P}(h_{2}^{*}):\mathbb{P}H^{0}(X_{1},\omega_{X_{1}}^{\vee}\otimes{\mathscr{I}}_{Z_{1}}^{2})\to\mathbb{P}H^{0}(X_{2},\omega_{X_{2}}^{\vee}\otimes{\mathscr{I}}_{Z_{2}}^{2})
\]
is an isomorphism which maps the image $V_{5,1}$ of $\psi_{Z_{1}}$
onto the image $V_{5,2}$ of $\psi_{Z_{2}}$. On the other hand, since
$S_{i}$ is the unique hyperplane section of $X_{i}$ having multiplicity
at least $3$ along $Z_{i}$, we have $h(S_{1})=S_{2}$ from which
it follows, by definition of $\Gamma_{i}\subset V_{5,i}$ as the image
of $S_{i}$ by $\psi_{Z_{i}}$, that $\mathbb{P}(h_{2}^{*})$ maps
$\Gamma_{1}$ onto $\Gamma_{2}$. The restriction $h':V_{5,1}\to V_{5,2}$
of $\mathbb{P}(h_{2}^{*})$ is thus an isomorphism of pairs $(V_{5,1},\Gamma_{1})\to(V_{5,2},\Gamma_{2})$
such that $h'\circ\psi_{Z_{1}}=\psi_{Z_{2}}\circ h$ as birational
maps, which shows that $\epsilon$ is well-defined and injective.
The same argument applied to the linear systems defining the birational
maps $\varphi_{\Gamma_{i}}:V_{5,i}\dashrightarrow X_{i}$, $i=1,2$,
provides a well-defined injective map 
\[
\epsilon':\mathrm{Isom}((V_{5,1},\Gamma_{1}),(V_{5,2},\Gamma_{2}))\to\mathrm{Isom}((X_{1},Z_{1}),(X_{2},Z_{2})),\;h'\mapsto\varphi_{\Gamma_{2}}\circ h'\circ\psi_{Z_{1}}
\]
 such that $\epsilon'\circ\epsilon=\mathrm{id}_{\mathrm{Isom}((X_{1},Z_{1}),(X_{2},Z_{2}))}$
and $\epsilon\circ\epsilon'=\mathrm{id}_{\mathrm{Isom}((V_{5,1},\Gamma_{1}),(V_{5,2},\Gamma_{2}))}$. 
\end{proof}
By combining Theorem \ref{thm:sarkisov} with the descriptions of
lines in $V_{5}$ and of hyperplane sections of $V_{5}$ singular
along a line of special type given in subsection \ref{subsec:Lines-V5}
and subsection \ref{subsec:Normalization-Special-Hyperplane-Section}
respectively, we obtain through suitable double projections the following
correspondence between lines in $X$ and in $V_{5}$: 
\begin{prop}
\label{prop:Hilbert-scheme-correspondence}Let $X$ be a smooth prime
Fano threefold of degree $22$ and let $\psi_{Z}:X\dashrightarrow V_{5}$
be the double projection from a line $Z$ in $X$. Assume that the
hyperplane section containing the base locus $\Gamma$ of $\psi_{Z}^{-1}$
is the hyperplane section $F_{p}$ singular along a line of special
type $\ell=\ell_{p}$ of $V_{5}$, $p\in\mathcal{C}$, and that $\Gamma$
intersects $\ell$ at $p$. Then $Z$ is a line of special type and
following hold:

1) The line $Z$ is the unique line of $X$ contained in $S$.

2) Every line $Z'\neq Z$ in $X$ is disjoint from $Z$, in particular
$Z$ is the base locus of $\psi_{Z}$.

3) The map which associates to a line $Z'\subset X$ other than $Z$
its proper transform $\ell_{Z'}\subset V_{5}$ by $\psi_{Z}$ induces
a one-to-one correspondence between lines on $X$ other than $Z$
and lines on $V_{5}$ not contained in $F_{p}$ which intersect $\Gamma$. 

4) Moreover, a line $Z'\subset X$ other than $Z$ is of special type
if and only if $\ell_{Z'}$ is a line of special type which meets
$\Gamma$ at a tangency point $q\in\Gamma\setminus\{p\}$ of the curves
$\Gamma$ and $\Gamma_{p}$ in $F_{p}$. In particular, if $\Gamma=\Gamma_{p}\subset F_{p}\cap\mathcal{S}$
then the lines of special type other than $Z$ on $X$ are the proper
transforms of the lines of special type in $V_{5}$ passing through
the points of $\Gamma_{p}\setminus\{p\}$. 
\end{prop}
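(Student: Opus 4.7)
The plan is to combine the Sarkisov link of Theorem \ref{thm:sarkisov} with the description of the non-normal hyperplane section $F_p$ through its normalization $\nu\colon\mathfrak{F}_{[\ell_p]}\to F_p$ from Proposition \ref{prop:Normalization-Special-Hyperplane-Section}. First, since $F_p$ is non-normal by Lemma \ref{lem:non-normal-section-charac}, Theorem \ref{thm:sarkisov}(b) directly gives that $Z$ must be a line of special type. Next I would classify all lines $\lambda\subset V_5$ whose scheme-theoretic intersection with $\Gamma$ has length at least $2$: a line $\lambda\not\subset F_p$ meets the hyperplane section $F_p$ in a single reduced point and so meets $\Gamma\subset F_p$ in at most one point; for a line $\lambda\subset F_p$ distinct from $\ell_p$, Proposition \ref{prop:Normalization-Special-Hyperplane-Section}(c) identifies $\nu^{-1}(\lambda)$ as the union of a fiber $f\neq f_{[\ell_p]}$ of $\pi$ and a single point of $f_{[\ell_p]}$; using that the proper transform $\Upsilon=\nu^{-1}_*\Gamma$ is an integral member of $|s_{[\ell_p]}+4f_{[\ell_p]}|$ through $\mathfrak{p}$ (by Lemma \ref{lem:inverse-image-quintic-normalization}(c) and the hypothesis $\Gamma\cap\ell_p=\{p\}$), the intersection $f\cdot\Upsilon=1$ forces $\lambda\cap\Gamma$ to be a single point distinct from $p$. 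For the line $\ell_p$ itself, the transverse intersections $\Upsilon\cdot s_{[\ell_p]}=\Upsilon\cdot f_{[\ell_p]}=1$ at $\mathfrak{p}$ combine under the pinching normalization $\nu$ into a length-$2$ scheme at $p$, so $\ell_p$ is tangent to $\Gamma$ at $p$. Consequently $\ell_p$ is the unique line of $V_5$ meeting $\Gamma$ in a length-$\geq 2$ subscheme.

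Combining this with Theorem \ref{thm:sarkisov}(a) yields assertion (2): the flopping locus of $\chi$ is the union of the proper transforms of lines of $X$ meeting $Z$ together with the exceptional section $s_3$ of $F'$ (since $Z$ is of special type), and is in bijection under the flop with the proper transforms of the lines of $V_5$ bisecant to $\Gamma$; the only such bisecant being $\ell_p$, matched with $s_3$, no line of $X$ other than $Z$ can meet $Z$. For (3), the proper transform $\hat Z'=\sigma^{-1}(Z')$ of any line $Z'\neq Z$ is then disjoint from the flopping locus, so $\chi$ maps it isomorphically to $\hat Z'^+\subset X^+$ with $-K_{X^+}\cdot\hat Z'^+=2$. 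Since $-K_{X^+}\cdot f^+=1$ for any fiber $f^+$ of $\tau|_{S^+}$ (using $-K_{X^+}=2\tau^*H-S^+$ and $S^+\cdot f^+=-1$), the curve $\hat Z'^+$ is not contracted by $\tau$; a further degree count then forces $\tau|_{\hat Z'^+}$ to be birational, and its image $\ell_{Z'}$ to be a line, not contained in $F_p$ (otherwise $\hat Z'^+\subset S^+$ would be contracted), meeting $\Gamma$ at the point(s) obtained from $\hat Z'^+\cap S^+$. The converse direction runs the same argument through $\varphi_\Gamma=\psi_Z^{-1}$: the proper transform of a line $\lambda$ of $V_5$ not in $F_p$ meeting $\Gamma$ is disjoint from the flopped locus, and descends through $\chi^{-1}$ and $\sigma$ to a line of $X$.

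For (4), I would track the splitting of the conormal bundle of $Z'$ through the Sarkisov link. The line $Z'\subset X$ is of special type if and only if its proper transform in $X^+$ has a prescribed splitting of its conormal sheaf, which translates via the flop and $\tau$ into the condition that $\ell_{Z'}$ be of special type in $V_5$ (so $\ell_{Z'}=\ell_{p'}$ for some $p'\in\mathcal{C}\setminus\{p\}$) and that the intersection $\ell_{Z'}\cap\Gamma\in\Gamma\cap\Gamma_p$ occur at a point where $\Gamma$ and the special quintic $\Gamma_p$ are tangent inside $F_p$. For the latter I would invoke Lemma \ref{lem:Propoer-transforms-special-quintics}: in the blow-up $\hat V_5\to V_5$ of $\ell_p$, the proper transforms of the special quintics meet the exceptional divisor in pairwise distinct points on the exceptional section $s_2$, and this refined intersection data detects tangency between $\Gamma$ and $\Gamma_p$. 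When $\Gamma=\Gamma_p$, every point of $\Gamma_p\setminus\{p\}$ is automatically a tangency point, yielding the last sentence of (4). Finally, assertion (1) follows from (2) and (3): if $Z'\subset S$ were a line of $X$ with $Z'\neq Z$, then by (2) it would be disjoint from $Z$ and its proper transform $\hat Z'^+$ would lie entirely in the strict transform $S^+$, so $\ell_{Z'}=\tau(\hat Z'^+)\subset\tau(S^+)=\Gamma$, contradicting that $\ell_{Z'}$ is a line and $\Gamma$ a rational normal quintic.

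The main obstacle, I expect, is the fourth step: cleanly reading off special versus general type of $Z'$ from the Sarkisov link data and detecting the tangency condition between $\Gamma$ and $\Gamma_p$ inside $F_p$ via the normal bundles of proper transforms in $\hat V_5$. The verification that $\ell_p\cdot\Gamma=2\cdot p$ inside $V_5$ also needs care, since it depends on correctly interpreting the pinching of the two branches $s_{[\ell_p]}$ and $f_{[\ell_p]}$ through $\nu$, and this underpins both the uniqueness of the bisecant in Step 2 and the functioning of the flop bijection in Step 3.
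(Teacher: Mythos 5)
Your overall route is the one the paper takes: deduce that $Z$ is of special type from the non-normality of $F_p$ via Theorem \ref{thm:sarkisov} b), show that $\ell_p$ is the unique bisecant of $\Gamma$ by working on the normalization $\nu:\mathfrak{F}_{[\ell_p]}\to F_p$, conclude that the flopping locus is just the exceptional section of $F'$, and then push lines through the link by intersection numbers. Two points need repair, however. First, your degree count in step 3 is wrong: since lines in $X$ are anticanonical lines and $\sigma_*^{-1}Z'$ is disjoint from $F'$, one has $-K_{X^+}\cdot\hat Z'^{+}=-K_{X'}\cdot\sigma_*^{-1}Z'=1$, not $2$; and since a fiber $f^+$ of $\tau|_{S^+}$ also has $-K_{X^+}\cdot f^+=1$, your comparison of anticanonical degrees cannot rule out contraction. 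The invariant that works is $\tau^*H\cdot\hat Z'^{+}=(-K_{X'}-F')\cdot\sigma_*^{-1}Z'=1$, which simultaneously shows that $\hat Z'^{+}$ is not contracted, not contained in $S^+$, and maps to a line; then $K_{X^+}=\tau^*K_{V_5}+S^+$ gives $S^+\cdot\hat Z'^{+}=1$, i.e.\ transversal intersection in one point. This is exactly the computation the paper runs, and it also delivers assertion 1) directly.

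The genuine gap is assertion 4). You state that the splitting type of $\mathcal{C}_{Z'/X}$ ``translates via the flop and $\tau$'' into the condition that $\ell_{Z'}$ be of special type and meet $\Gamma$ at a tangency point of $\Gamma$ and $\Gamma_p$, and you yourself flag this as the main obstacle — but no mechanism is given, and this equivalence is the entire content of 4). The paper's device is concrete: blow up $X$ along $Z'$ and $V_5$ along $\ell_{Z'}$; the induced birational map between the exceptional divisors $E_{\ell_{Z'}}\cong\mathbb{F}_0$ or $\mathbb{F}_2$ and $E_{Z'}\cong\mathbb{F}_1$ or $\mathbb{F}_3$ is an elementary transformation centered at the point $\hat\Gamma\cap E_{\ell_{Z'}}$, with the fiber over $q=\ell_{Z'}\cap\Gamma$ contracted. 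Hence $E_{Z'}\cong\mathbb{F}_3$ if and only if $E_{\ell_{Z'}}\cong\mathbb{F}_2$ and the center lies on the $(-2)$-section $s_2$, i.e.\ at $s_2\cap\rho_{\ell_{Z'}}^{-1}(q)$. Lemma \ref{lem:Propoer-transforms-special-quintics} (applied to the blow-up along $\ell_{Z'}$) identifies this point with $\hat\Gamma_p\cap E_{\ell_{Z'}}$, so the condition becomes precisely that $\Gamma$ and $\Gamma_p$ are tangent at $q$. Without this elementary-transformation step (or an equivalent normal-bundle computation), your proof of 4) is an assertion rather than an argument; the remaining parts, once the degree count is corrected, are sound and coincide with the paper's.
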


\begin{proof}
Since, by assumption, $F_{p}$ is non-normal, $Z$ is a line of special
type by Theorem \ref{thm:sarkisov} b). Assertions 1), 2) and 3) can
be derived from the same arguments as in \cite[Lemma 5.4.2 and Proposition 5.4.3]{KPS18}.
Nevertheless, since the aforementioned results are stated under assumptions
slightly different than ours, we provide a brief self-contained proof.
Since $Z$ is a line of special type, by Theorem \ref{thm:sarkisov}
a) the flopping locus of $\chi:X'\dashrightarrow X^{+}$ is the union
of the exceptional section of $\sigma|_{F'}:F'=\mathbb{P}(\mathcal{C}_{Z/X})\to Z$
and of the proper transforms $\sigma_{*}^{-1}Z'$ in $X'$ of the
lines $Z'\neq Z$ in $X$ which meet $Z$, and its flopped locus consists
of the proper transforms in $X^{+}$ of lines $\ell'$ in $V_{5}$
which are bisecant to $\Gamma\subset F_{p}$. Since $F_{p}$ is a
hyperplane section of $V_{5}$, a line $\ell'\subset V_{5}$ bisecant
to $\Gamma$ is necessarily contained in $F_{p}$. So, by Proposition
\ref{prop:Normalization-Special-Hyperplane-Section}, $\ell'$ is
the image by the normalization morphism $\nu:\mathfrak{F}_{[\ell]}\to F_{p}$
of a fiber $f'$ of $\pi:\mathfrak{F}_{[\ell]}\to T_{[\ell]}\mathfrak{C}$.
On the other hand, by Lemma \ref{lem:inverse-image-quintic-normalization},
a rational normal quintic curve $\Gamma\subset F_{p}$ passing through
$p$ is the image by $\nu$ of an irreducible and reduced member $\Upsilon$
of the linear system $|s_{[\ell]}+4f_{[\ell]}|$ passing through $\mathfrak{p}=s_{[\ell]}\cap f_{[\ell]}$.
Proposition \ref{prop:Normalization-Special-Hyperplane-Section} d)
then implies that the image $\nu(f')$ of every fiber $f'$ of $\pi:\mathfrak{F}_{[\ell]}\to T_{[\ell]}\mathfrak{C}$
other than $f_{[\ell]}$ intersects $\Gamma$ transversally at the
unique point $\nu(f'\cap\Upsilon)\in F_{p}\setminus\ell$. So $\ell=\nu(f_{[\ell]})$
is the unique bisecant line to $\Gamma$ and hence, the flopping locus
of $\chi$ consists only of the exceptional section of $\sigma|_{F'}:F'\to Z$.
This implies that every line $Z'$ in $X$ other than $Z$ is disjoint
from $Z$, which proves assertion 2), and that its proper transform
$\sigma_{*}^{-1}Z'$ on $X'$ is disjoint from $F'$. 

Since the rational map $\tau\circ\chi:X'\dashrightarrow V_{5}$ is
given by the complete linear system $|-K_{X'}-F'|$, we have 
\begin{equation}
-\frac{1}{2}\tau^{*}K_{V_{5}}\cdot\chi_{*}(\sigma_{*}^{-1}Z')=(-K_{X'}-F')\cdot\sigma_{*}^{-1}Z'=1.\label{eq:transform-line}
\end{equation}
This implies that $\chi_{\ast}(\sigma_{\ast}^{-1}Z')$ is not contained
in the exceptional divisor $S^{+}$ of $\tau$ over the rational normal
quintic curve $\Gamma$, which gives assertion 1). Since $K_{X^{+}}=\tau^{*}K_{V_{5}}+S^{+}$
and $K_{X^{+}}\cdot\chi_{*}(\sigma_{*}^{-1}Z')=K_{X'}\cdot\sigma_{*}^{-1}Z'$,
(\ref{eq:transform-line}) implies in turn that $\chi_{*}(\sigma_{*}^{-1}Z')$
intersects $S^{+}$ transversally in a unique point, whence that the
proper transform of $Z'$ in $V_{5}$ is a line $\ell_{Z'}$ in $V_{5}$
which intersects $F$ transversally at a point $q$ of the image $\Gamma$
of $S^{+}$ by $\tau:X^{+}\to V_{5}$. Note that since $\ell$ is
the unique line of $V_{5}$ passing through $p\in\mathcal{C}$, we
have $q\in\Gamma\setminus\{p\}$. Conversely, the proper transform
by $\psi_{Z}^{-1}$ of a line $\ell'\subset V_{5}$ intersecting $\Gamma$
and not contained in $F$ is a line $Z_{\ell'}\subset X$ not contained
in $S$ and disjoint from $Z$. This gives assertion 3). 

To prove assertion 4), consider a line $Z'\subset X$ other than $Z$
with conormal sheaf $\mathcal{C}_{Z'/X}$ and its proper transform
$(\psi_{Z})_{*}Z'$, which is a line $\ell'$ in $V_{5}$ not contained
in $F_{p}$ and intersecting $\Gamma$ at a point $q\in\Gamma\setminus\{p\}$.
Let $\beta_{Z'}:\hat{X}\to X$ be the blow-up of $X$ along $Z'$
with exceptional divisor $\rho_{Z'}:E_{Z'}\cong\mathbb{P}(\mathcal{C}_{Z'/X})\to Z'$
and let $\beta_{\ell'}:\hat{V}_{5}\to V_{5}$ be the blow-up of $V_{5}$
along $\ell'$ with exceptional divisor $\rho_{\ell'}:E_{\ell'}\cong\mathbb{P}(\mathcal{C}_{\ell'/V_{5}})\to\ell'$,
where $\mathcal{C}_{\ell'/V_{5}}$ is the conormal sheaf of $\ell'$
in $V_{5}$. The birational map $\mathrm{e}:E_{\ell'}\dashrightarrow E_{Z'}$
defined by the restriction of the inverse of the birational map $\hat{\psi}_{Z}:\hat{X}\dashrightarrow\hat{V}_{5}$
induced by $\psi_{Z}$ is an elementary transformation between Hirzebruch
surfaces which consists of the blow-up of the intersection point of
$E_{\ell'}$ with the proper transform $\hat{\Gamma}$ of $\Gamma$
in $\hat{V}_{5}$ followed by the contraction of the proper transform
of $\rho_{\ell'}^{-1}(q).$ Since
\[
E_{\ell'}\cong\begin{cases}
\mathbb{F}_{0} & \textrm{if }\ell'\textrm{ is of general type}\\
\mathbb{F}_{2} & \textrm{if }\ell'\textrm{ is of special type}
\end{cases}\quad\textrm{and}\quad E_{Z'}\cong\begin{cases}
\mathbb{F}_{1} & \textrm{if }Z'\textrm{ is of general type}\\
\mathbb{F}_{3} & \textrm{if }Z'\textrm{ is of special type, }
\end{cases}
\]
it follows that $Z'$ is of special type if and only if $\ell'$ is
of special type and $\hat{\Gamma}$ intersects $E_{\ell'}$ at the
intersection point $q'$ of $\rho_{\ell'}^{-1}(q)$ with the exceptional
section $s_{2}$ of $\rho_{\ell'}:E_{\ell'}\cong\mathbb{F}_{2}\to\ell'$
with self-intersection number $-2$. By subsection \ref{subsec:Lines-V5},
a line $\ell'\subset V_{5}$ of special type other than $\ell$ intersects
$F_{p}$ transversally at a point of the special rational normal quintic
curve $\Gamma_{p}\subset\mathcal{S}\cap F_{p}=\Gamma_{p}\cup5\ell$
other than $p$ (see Lemma \ref{lem:non-normal-section-charac}).
So $q=\ell'\cap F_{p}\in\Gamma\setminus\{p\}$ belongs to the intersection
of $\Gamma$ and $\Gamma_{p}$. Since, by Lemma \ref{lem:Propoer-transforms-special-quintics},
the proper transform of $\Gamma_{p}$ in $\hat{V_{5}}$ intersects
$E_{\ell'}$ at the point $q'=s_{2}\cap\rho_{\ell'}^{-1}(q)$, the
property that $\hat{\Gamma}$ intersects $E_{\ell'}$ at $q'$ is
equivalent to the property that $q$ is a tangency point of $\Gamma$
and $\Gamma_{p}$ in $F_{p}$. 
\end{proof}

\subsection{Isomorphism types of prime Fano threefold of degree $22$ with infinite
automorphism groups}

Recall from \cite[$\S$ 4.c.]{Gro61} that the automorphism group ${\rm Aut}(X)$
of a smooth projective variety $X$ is an algebraic group scheme locally
of finite type, in particular its neutral component $\mathrm{Aut}^{0}(X)$
is an algebraic group. For a smooth prime Fano threefold $X$ of degree
$22$, the anti-canonical embedding $\varphi_{|\mathcal{\omega}_{X}^{\vee}|}:X\hookrightarrow\mathbb{P}(H^{0}(X,\omega_{X}^{\vee}))\cong\mathbb{P}^{13}$
is $\mathrm{Aut}(X)$-equivariant with respect to the action of $\mathrm{Aut}(X)$
on $\mathbb{P}^{13}$ induced by the canonical $\mathrm{Aut}(X)$-linearization
of $\omega_{X}^{\vee}$, identifying $\mathrm{Aut}(X)$ with the affine
algebraic subgroup $\mathrm{Aut}(\mathbb{P}^{13},\varphi_{|\mathcal{\omega}_{X}^{\vee}|}(X))$
of $\mathrm{Aut}(\mathbb{P}^{13})=\mathrm{PGL}_{14}$. 
\begin{lem}
\label{prop:line} Let $X$ be a smooth prime Fano threefold of degree
$22$ with infinite automorphism group ${\rm Aut}(X)$ and let $B$
be a nontrivial connected solvable subgroup of ${\rm Aut}(X)$. Then
the following hold:

1) $B$ is isomorphic either to $\mathbb{G}_{a}$, or to $\mathbb{G}_{m}$
or to $\mathbb{G}_{a}\rtimes\mathbb{G}_{m}$.

2) $X$ contains a $B$-stable line of special type and moreover,
all $B$-stable lines in $X$ are of special type.
\end{lem}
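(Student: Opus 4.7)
The plan is to transfer the action of $B$ on $X$ to the quintic del Pezzo threefold $V_{5}$ via a double projection from a suitable line, and then invoke the classification of connected solvable subgroups of $\mathrm{Aut}(V_{5})\cong\mathrm{PGL}_{2}$ together with the structural results of Section \ref{sec:V5-Stuff}. Since the Hilbert scheme $\mathcal{H}_{X}$ of lines in $X$ is non-empty, proper, and of pure dimension one by \cite[Proposition 1]{Isk89}, and the connected solvable algebraic group $B$ acts on it, the Borel fixed point theorem produces a closed $B$-fixed point in $\mathcal{H}_{X}$, i.e.\ a $B$-stable line $Z\subset X$.

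Let $\psi_{Z}\colon X\dashrightarrow V_{5}$ be the double projection from such a $Z$, with base locus of $\psi_{Z}^{-1}$ equal to a rational normal quintic curve $\Gamma\subset V_{5}$ as in Theorem \ref{thm:sarkisov}. Applying Corollary \ref{cor:Functoriality-double-projection} with $(X_{1},Z_{1})=(X_{2},Z_{2})=(X,Z)$ yields a bijection $\mathrm{Aut}(X,Z)\to\mathrm{Aut}(V_{5},\Gamma)$, $h\mapsto\psi_{Z}\circ h\circ\varphi_{\Gamma}$, which is in fact a group homomorphism, hence a group isomorphism, because $\varphi_{\Gamma}\circ\psi_{Z}$ is the identity as a birational self-map of $X$. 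Restricting it to $B\subset\mathrm{Aut}(X,Z)$ provides an injective homomorphism $B\hookrightarrow\mathrm{Aut}(V_{5},\Gamma)\subset\mathrm{Aut}(V_{5})\cong\mathrm{PGL}_{2}$. Since the non-trivial connected solvable subgroups of $\mathrm{PGL}_{2}$ are precisely $\mathbb{G}_{a}$, $\mathbb{G}_{m}$, and the Borel subgroups $\mathbb{G}_{a}\rtimes\mathbb{G}_{m}$, assertion 1) follows.

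For assertion 2), I apply the same construction to an arbitrary $B$-stable line $Z'\subset X$: its associated rational normal quintic $\Gamma'\subset V_{5}$ is then stabilized by the image of $B$ in $\mathrm{Aut}(V_{5})$, which is infinite. Lemma \ref{prop:RationalQuintic-inifnite-stabilizer} places $\Gamma'$ inside some hyperplane section $F_{p}$ singular along a line of special type $\ell_{p}$, so in particular $F_{p}$ is non-normal. Theorem \ref{thm:sarkisov} b) then forces $Z'$ to be a line of special type. Applied both to the specific $Z$ furnished by the first step (existence) and to an arbitrary $B$-stable line (all such are of special type), this gives assertion 2).

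The substantive content of the argument has already been distilled into the preceding results, so the only point requiring care is verifying that the functoriality map of Corollary \ref{cor:Functoriality-double-projection}, restricted to $\mathrm{Aut}(X,Z)$, is a group isomorphism rather than a mere bijection; once this is settled, both assertions reduce to standard facts about connected solvable subgroups of $\mathrm{PGL}_{2}$ and to the non-normality of $F_{p}$ feeding into Theorem \ref{thm:sarkisov} b).
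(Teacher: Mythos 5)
Your proposal is correct and follows essentially the same route as the paper's own proof: Borel's fixed point theorem applied to the projective curve $\mathcal{H}_{X}$ to produce a $B$-stable line, transfer of $B$ to a connected solvable subgroup of $\mathrm{Aut}(V_{5},\Gamma)\subset\mathrm{PGL}_{2}$ via Corollary \ref{cor:Functoriality-double-projection}, and the non-normality of the hyperplane section containing $\Gamma$ combined with Theorem \ref{thm:sarkisov} b) to conclude that every $B$-stable line is of special type. Your explicit remark that the functoriality bijection is a group isomorphism (since $\varphi_{\Gamma}\circ\psi_{Z}=\mathrm{id}$) is a worthwhile clarification that the paper leaves implicit.
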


\begin{proof}
The result is implicitly stated in \cite{KPS18}, we provide a different
proof. The Hilbert scheme of lines on $X$ being a projective curve,
the existence of a $B$-stable line $Z$ follows from Borel's fixed
point theorem. The first assertion follows from Theorem \ref{thm:Quintics-Main-Theorem}
and Corollary \ref{cor:Functoriality-double-projection}, which identifies,
through the double projection $\psi_{Z}:X\dashrightarrow V_{5}$,
the group $B\subset\mathrm{Aut}(X,Z)$ with a nontrivial connected
solvable subgroup of the stabilizer $\mathrm{Aut}(V_{5},\Gamma)$
of a rational normal quintic curve $\Gamma\subset V_{5}$. Moreover,
by Lemma \ref{lem:non-normal-section-charac}, the unique hyperplane
section $F$ of $V_{5}$ containing $\Gamma$ is the non-normal section
swept out by lines meeting a line of special type in $V_{5}$ and
then, Theorem \ref{thm:sarkisov} b) implies that $Z$ is of special
type. 
\end{proof}
Putting all the pieces together, we obtain the following classification: 
\begin{thm}
\label{prop:Second-Isomorphism-Classification} Up to isomorphism,
the following hold:

1) There exists a unique threefold $X_{22}^{MU}$ such that $\mathrm{Aut}^{0}(X_{22}^{MU})$
contains a Borel subgroup isomorphic to $\mathbb{G}_{a}\rtimes\mathbb{G}_{m}$.
Moreover, every line on $X_{22}^{MU}$ is of special type and $\mathrm{Aut}(X_{22}^{MU})\cong\mathrm{PGL}_{2}$.

2) There exists a unique threefold $X_{22}^{a}$ such that $\mathrm{Aut}^{0}(X_{22}^{a})$
contains a Borel subgroup isomorphic to $\mathbb{G}_{a}$. Moreover,
$X_{22}^{a}$ contains a unique line of special type and $\mathrm{Aut}(X_{22}^{a})\cong\mathbb{G}_{a}\rtimes\mu_{4}$,
where the action of $\mu_{4}$ on $\mathbb{G}_{a}$ is given by the
natural injective homomorphism $\mu_{4}\to\mathrm{Aut}(\mathbb{G}_{a})\cong\mathbb{G}_{m}$. 

3) There exist pairwise non-isomorphic threefolds $X_{22}^{m}(v)$,
$v\in\mathbb{P}^{1}\setminus\{0,1,\infty,-4\}$, such that $\mathrm{Aut}^{0}(X_{22}^{m}(v))$
contains a Borel subgroup isomorphic to $\mathbb{G}_{m}$. Each threefold
$X_{22}^{m}(v)$ contains exactly two lines of special type and $\mathrm{Aut}(X_{22}^{m}(v))\cong\mathbb{G}_{m}\rtimes\mu_{2}$,
where the group $\mu_{2}$ is generated by an involution $\theta$
which exchanges the two lines of special type in $X_{22}^{m}(v)$.
\end{thm}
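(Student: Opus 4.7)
The plan is to transport the classification of Theorem \ref{thm:Quintics-Main-Theorem} from pairs $(V_5,\Gamma)$ to pairs $(X,Z)$ via the double projection. Given $X$ with infinite automorphism group and a nontrivial Borel subgroup $B \subset \mathrm{Aut}^{0}(X)$, Lemma \ref{prop:line} produces a $B$-stable line $Z \subset X$ of special type. The Sarkisov link of Theorem \ref{thm:sarkisov} and Corollary \ref{cor:Functoriality-double-projection} then yield a pair $(V_5,\Gamma)$ with $\mathrm{Aut}(X,Z) \cong \mathrm{Aut}(V_5,\Gamma)$ infinite, and Theorem \ref{thm:Quintics-Main-Theorem} partitions such pairs into three $\mathrm{Aut}(V_5)$-orbits. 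Running the inverse Sarkisov link produces the three threefolds $X_{22}^{MU}$, $X_{22}^{a}$, $X_{22}^{m}(v)$, each unique up to isomorphism, together with the displayed inclusions of Borel subgroups in $\mathrm{Aut}^{0}$. Mutual non-isomorphism within the family $\{X_{22}^{m}(v)\}$ follows because an isomorphism $X_{22}^{m}(v)\cong X_{22}^{m}(v')$ would permute the special lines (counted in the next paragraph) and thus induce an $\mathrm{Aut}(V_5)$-equivalence between $(V_5,\Gamma^{m}(v))$ and $(V_5,\Gamma^{m}(v'))$, forcing $v=v'$.

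Second, I would count the lines of special type in each threefold by Proposition \ref{prop:Hilbert-scheme-correspondence} 4), which identifies special lines on $X$ other than $Z$ with lines of special type in $V_5$ through tangency points of $\Gamma$ with the distinguished quintic $\Gamma_p \subset F_p$ lying away from $p$. Using the models of Example \ref{exa:Explicit-model-Take-1}: for $\Gamma^{MU}=\Gamma_p$ the two curves coincide, so every special line of $V_5$ meeting $\Gamma_p$ away from $p$ corresponds to a special line of $X_{22}^{MU}$, yielding that \emph{every} line on $X_{22}^{MU}$ is of special type; for $\Gamma^{a}$, a direct inspection of $\Upsilon^{a}(v)\cap \Upsilon_p$ on $\mathfrak{F}_{[\ell_p]}$ shows that no tangency occurs outside $\mathfrak{p}$, so $Z$ is the unique special line on $X_{22}^{a}$; for $\Gamma^{m}(v)$, the second $T$-fixed intersection point $r_T''$ of Proposition \ref{prop:T-stable-quintics} is a tangency of multiplicity four, producing exactly one further special line $Z'$ on $X_{22}^{m}(v)$.

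Third, I would determine the full automorphism groups. For $X_{22}^{MU}$, the classical Mukai--Umemura presentation provides a $\mathrm{PGL}_2$-action, and together with the injectivity of $\epsilon$ in Corollary \ref{cor:Functoriality-double-projection} applied to any special line one forces $\mathrm{Aut}(X_{22}^{MU})\cong\mathrm{PGL}_2$ with no discrete extension. For $X_{22}^{a}$, the unique special line is stabilized by every automorphism, so $\mathrm{Aut}(X_{22}^{a})=\mathrm{Aut}(X_{22}^{a},Z)\cong\mathrm{Aut}(V_5,\Gamma^{a})\cong \mathbb{G}_a\rtimes \mu_4$ by Theorem \ref{thm:Quintics-Main-Theorem}. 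For $X_{22}^{m}(v)$, the two special lines $Z,Z'$ are necessarily permuted by $\mathrm{Aut}(X_{22}^{m}(v))$, and the stabilizer of $Z$ is $\mathbb{G}_m$ by the same functoriality, so it only remains to exhibit an involution exchanging $Z$ and $Z'$.

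The construction of this involution $\theta$ is the main obstacle and is the content of Lemma \ref{prop:The-crucial-involution}. Following the strategy outlined in the introduction, I would build $\theta=\Phi^{-1}\circ\sigma\circ\Phi$ where $\Phi \colon X_{22}^{m}(v)\dashrightarrow Q$ is a $\mathbb{G}_m$-equivariant birational map to a suitable quadric threefold $Q$ carrying a $\mathbb{G}_m$-action and $\sigma\in \mathrm{Aut}(Q)$ is a known involution normalizing that $\mathbb{G}_m$-action. A natural candidate for $\Phi$ is built from a projection away from a special line combined with the non-normal hyperplane-section geometry of Proposition \ref{prop:Normalization-Special-Hyperplane-Section}; the delicate point is to verify that the conjugate $\Phi^{-1}\sigma\Phi$ extends across the exceptional loci of $\Phi$ to a biregular involution of $X_{22}^{m}(v)$. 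Once $\theta$ is produced, its action on the Hilbert scheme of lines must swap $Z$ and $Z'$ (the only two $\mathbb{G}_m$-fixed special lines) and hence normalize $\mathbb{G}_m$, completing the structure $\mathrm{Aut}(X_{22}^{m}(v))\cong\mathbb{G}_m\rtimes\mu_2$.
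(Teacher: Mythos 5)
Your overall strategy coincides with the paper's: reduce to the classification of pairs $(V_5,\Gamma)$ via Lemma \ref{prop:line}, Theorem \ref{thm:sarkisov}, Corollary \ref{cor:Functoriality-double-projection} and Theorem \ref{thm:Quintics-Main-Theorem}, count special lines via Proposition \ref{prop:Hilbert-scheme-correspondence} 4), and obtain the extra $\mu_2$ by conjugating a known involution of a quadric threefold. The line counts and the case of $X_{22}^{a}$ are essentially correct as you present them. The genuine gap is that you never actually construct the involution $\theta$: you write that ``the delicate point is to verify that the conjugate $\Phi^{-1}\sigma\Phi$ extends across the exceptional loci of $\Phi$ to a biregular involution'' and stop there. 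That verification is precisely the hard content of the proof (and the main novelty the paper advertises over \cite{KP18,KPS18}). The paper carries it out by taking $\xi=\pi_{\ell_{p'}}\circ\psi_{Z}:X\dashrightarrow Q$, tracking the images under $\xi$ of the divisors $S_{Z}$, $S_{Z'}$, $E_{Z}$, $E_{Z'}$ and $R_{\{Z,Z'\}}$, and using the precise statement of Lemma \ref{prop:The-crucial-involution} 2) --- that $\iota_{Q}$ maps the cubic curve $C_{q,q'}^{Q}=\xi_{*}R_{\{Z,Z'\}}$ isomorphically onto itself and that $(\iota_{Q}^{-1})_{*}E_{Z'}^{Q}=E_{Z}^{Q}$ --- to conclude that the only divisor $\theta$ could possibly contract is in fact not contracted, whence $\theta$ is biregular. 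Without this, neither $\mathrm{Aut}(X_{22}^{m}(v))\cong\mathbb{G}_{m}\rtimes\mu_{2}$ nor the pairwise non-isomorphism of the $X_{22}^{m}(v)$ is established: the latter needs $(X,Z)\cong(X,Z')$, which is exactly what $\theta$ provides, since a priori the double projections from $Z$ and from $Z'$ could produce inequivalent curves $\Gamma^{m}(v)$ and $\Gamma^{m}(w)$.

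A secondary issue is case 1): you appeal to ``the classical Mukai--Umemura presentation'' for the $\mathrm{PGL}_{2}$-action, which is external to the argument and would in any case require identifying the threefold attached to $(V_{5},\Gamma_{p})$ with the Mukai--Umemura orbit closure, something you have not done. The paper instead generates $\mathrm{PGL}_{2}$ internally from the Borel subgroup $G_{p}=\mathrm{Aut}(X,Z)$ together with the involution $\theta$ (which normalizes a maximal torus $T$ without centralizing it), and then excludes extra components by first proving $(\mathcal{H}_{X})_{\mathrm{red}}\cong\mathbb{P}^{1}$ and then that $\mathrm{Aut}(X)$ acts faithfully on it, so that $\mathrm{Aut}(X)$ embeds into $\mathrm{Aut}(\mathbb{P}^{1})$ and is therefore connected. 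Your appeal to ``injectivity of $\epsilon$'' alone does not rule out a disconnected automorphism group.
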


\begin{proof}
Let $B$ be a nontrivial Borel subgroup of $\mathrm{Aut}(X)$ and
let $Z$ be a $B$-stable line of special type given by Lemma \ref{prop:line}
2). By Theorem \ref{thm:sarkisov} a) and Corollary \ref{cor:Functoriality-double-projection},
the double projection $\psi_{Z}:X\dashrightarrow V_{5}$ from $Z$
identifies the stabilizer $G_{Z}=\mathrm{Aut}(X,Z)$ of $Z$ with
the stabilizer $G_{\Gamma}=\mathrm{Aut}(V_{5},\Gamma)$ of a rational
normal quintic curve $\Gamma\subset V_{5}$. By the classification
established in subsection \ref{subsec:Classification-Quintic-Curves-Infinite-Stabilizers},
the unique hyperplane section $F$ of $V_{5}$ containing $\Gamma$
is a hyperplane section $F_{p}$, $p\in\mathcal{C}$, singular along
a line $\ell_{p}$ of special type of $V_{5}$, $\Gamma$ passes through
$p$ and the neutral component $G_{\Gamma}^{0}$ of $G_{\Gamma}$
is either equal to stabilizer $G_{p}\cong\mathbb{G}_{a}\rtimes\mathbb{G}_{m}$
of $p$, or to a maximal torus $T$ of $G_{p}$ or to the unipotent
radical of $G_{p}$. Since all these groups are connected and solvable,
$G_{Z}^{0}$ is a connected and solvable subgroup of $\mathrm{Aut}(X)$
containing $B$, whence, by the maximality of $B$ as a connected
solvable subgroup of $\mathrm{Aut}(X)$, is equal to $B$. Theorem
\ref{thm:Quintics-Main-Theorem} then gives $X_{22}^{MU}$, $X_{22}^{a}$
and $X_{22}^{m}(v)$, $v\in\mathbb{P}^{1}\setminus\{0,1,\infty,-4\}$,
for the isomorphism types for $X$ and the corresponding subgroups
$B$. We now prove the remaining assertions in each case:

\medskip

$\bullet$ Case $\Gamma=\Gamma^{a}$. By Proposition \ref{prop:Ga-stable-quintics}
and Corollary \ref{cor:Ga-stable-quintics}, the pair $(V_{5},\Gamma^{a})$
is unique up to isomorphism and $\Gamma^{a}\cap\Gamma_{p}=\{p\}$.
Moreover, since $\Gamma$ and $\Gamma_{p}$ meet at $p$ only, Proposition
\ref{prop:Hilbert-scheme-correspondence} 4) implies that every line
in $X_{22}^{a}$ other than $Z$ is of general type. Corollary \ref{cor:Functoriality-double-projection}
implies that the isomorphism class of the pair $(X_{22}^{a},Z)$ equals
that of the threefold $X_{22}^{a}$ and that $\mathrm{\mathrm{Aut}}(X_{22}^{a})=\mathrm{Aut}(X_{22}^{a},Z)$
identifies with the stabilizer $G_{\Gamma^{a}}\cong\mathbb{G}_{a}\rtimes\mu_{4}$
of $\Gamma^{a}\subset V_{5}$, see Corollary \ref{cor:Ga-stable-quintics}. 

\medskip

$\bullet$ Case $\Gamma=\Gamma^{T}(v)$, $v\in\mathbb{P}^{1}\setminus\{0,1,\infty\}$.
If $\Gamma=\Gamma_{p}\subset\mathcal{S}\cap F_{p}$ then $B=G_{p}\cong\mathbb{G}_{a}\rtimes\mathbb{G}_{m}$
and otherwise $B$ is some maximal torus $T$ of $G_{p}$. In the
first case, we let $T$ be any maximal torus of $G_{p}$. 

We first complete the description of the lines in $X$. Let $q'\in\Gamma_{p}$
be the unique $T$-stable point of $\Gamma_{p}$ other than $p$,
that is, with the notation of Case (1) in subsection \ref{subsec:Classification},
$q'$ is the image of the point $r_{T}''$ by the normalization morphism
$\nu:\mathfrak{F}_{[\ell]}\to F_{p}$. The unique line $\ell_{p'}$,
$p'\in\mathcal{C}\setminus\{p\}$, of special type of $V_{5}$ passing
through $q'$ is then $T$-stable with the point $q=\nu(r_{T})=\nu(r_{T}')\in\ell_{p}$
as its other $T$-stable point. 

$\quad$ - If $\Gamma\neq\Gamma_{p}$ then, by Proposition \ref{prop:T-stable-quintics},
$\Gamma$ intersects $\Gamma_{p}$ with multiplicity $4$ at $q'$.
Proposition \ref{prop:Hilbert-scheme-correspondence} 4) implies that
the proper transform $Z'$ of $\ell_{p'}$ in $X$ by $\psi_{Z}:X\dasharrow V_{5}$
is a $T$-stable line of special type of $X$ other than $Z$ and
that $Z$ and $Z'$ are the unique lines of special type on $X$. 

$\quad$ - If $\Gamma=\Gamma_{p}$ then, by Proposition \ref{prop:Hilbert-scheme-correspondence}
3), the lines in $X$ other than $Z$ are the proper transforms by
$\psi_{Z}:X\dasharrow V_{5}$ of the lines in $V_{5}$ not contained
in $F_{p}$ and meeting $\Gamma_{p}$. Since $\Gamma_{p}\subset\mathcal{S}$,
there are exactly two lines passing through a point of $\Gamma_{p}$
other than $p$: one line of general type contained in $F_{p}$ and
the other one of special type contained in $\mathcal{S}$. Using the
notation of subsection \ref{subsec:Lines-V5}, these lines of special
type are the images of the fibers of $\pi:\mathfrak{U}|_{\mathfrak{C}\setminus[\ell_{p}]}\to\mathfrak{C}\setminus[\ell_{p}]$
by the evaluation morphism $\psi:\mathfrak{U}\to V_{5}$. Proposition
\ref{prop:Hilbert-scheme-correspondence} 4) then implies that every
line $Z'$ in $X$ other than $Z$ is of special type, equal to the
image of a fiber of $\pi:\mathfrak{U}|_{\mathfrak{C}\setminus[\ell_{p}]}\to\mathfrak{C}\setminus[\ell_{p}]$
by the composition of $\psi:\mathfrak{U}\to V_{5}$ with $\psi_{Z}^{-1}$.
Similarly, given any such line $Z'$, the lines in $X$ other than
$Z'$ are the images of the fibers of $\pi:\mathfrak{U}|_{\mathfrak{C}\setminus[\ell_{p'}]}\to\mathfrak{C}\setminus[\ell_{p'}]$
by the composition $\psi_{Z'}^{-1}\circ\psi$. It follows that the
Hilbert scheme $\mathcal{H}_{X}$ of lines in $X$ is irreducible
and that $(\mathcal{H}_{X})_{\mathrm{red}}$ is covered by two affine
open subsets isomorphic to $\mathfrak{C}\setminus[\ell_{p}]$ and
$\mathfrak{C}\setminus[\ell_{p'}]$ intersecting along an open subset
isomorphic to $\mathfrak{C}\setminus\{[\ell_{p}],[\ell_{p'}]\}$.
Thus, $(\mathcal{H}_{X})_{\mathrm{red}}$ is isomorphic to $\mathbb{P}^{1}$,
and Proposition \ref{prop:Hilbert-scheme-correspondence} 4) then
says that all lines in $X$ are of special type. \\

We now show that for any $\Gamma=\Gamma^{T}(v)$, $v\in\mathbb{P}^{1}\setminus\{0,1,\infty\}$,
as above, there exists an involution $\theta$ of $X$ semi-commuting
with the action of $T$ -i.e. contained in the normalizer of $T$
in $\mathrm{Aut}(X)$ but not in its centralizer- and mapping $Z$
onto $Z'$. We fix an identification of $T$ with the neutral component
of the subgroup $\mathrm{Aut}(\mathcal{C},\{p,p'\})$ of $\mathrm{Aut}(\mathcal{C})$,
we let $F_{p'}$ be the unique hyperplane section singular along the
line $\ell_{p'}$. Since $\ell_{p}$ and $\ell_{p'}$ are lines of
special type, there exists a unique line $\ell_{q,q'}$ in $V_{5}$
passing through the $T$-stable points $q\in\ell_{p}\setminus\{p\}$
and $q'\in\ell_{p'}\setminus\{p'\}$. The line $\ell_{q,q'}$ is $T$-stable
and since $F_{p}$ and $F_{p'}$ are swept out by lines meeting $\ell_{p}$
and $\ell_{p'}$ respectively, $\ell_{q,q'}$ is contained in $F_{p}\cap F_{p'}$.
Since $F_{p}\cap F_{p'}$ is a $T$-stable hyperplane section of $F_{p}$,
it follows, with the notation of of Case (1) in subsection \ref{subsec:Classification},
that $F_{p}\cap F_{p'}$ is the union of $\ell_{q,q'}=\nu(f_{T})$
and of the $T$-stable curve $C_{q,q'}:=\nu(s_{T})$ passing through
$q$ and $q'$. 

Let $S_{Z}$ and $S_{Z'}$ be the unique hyperplane sections of $X$
of multiplicity $3$ along $Z$ and $Z'$ respectively, let $E_{Z}$
and $E_{Z'}$ be the exceptional divisors of the blow-up $\alpha_{Z,Z'}:X_{0}\to X$
of $X$ along $Z$ and $Z'$ respectively and let $R_{\{Z,Z'\}}=(\psi_{Z}^{-1})_{*}F_{\ell_{p'}}$
be the hyperplane section of $X$ having multiplicity $2$ along $Z$
and $Z'$. We view all these as $T$-stable divisors over $X$. By
definition of $Z'$ and $\psi_{Z}$, $(\psi_{Z}\circ\alpha_{Z,Z'})_{*}E_{Z}=F_{p}$,
$(\psi_{Z}\circ\alpha_{Z,Z'})_{*}E_{Z'}=\ell_{p'}$, $(\psi_{Z})_{*}S_{Z}=\Gamma$
whereas $(\psi_{Z})_{*}S_{Z'}$ is the cubic section of $X$ having
multiplicity $3$ along $\ell_{p'}$ and multiplicity $2$ along $\Gamma$. 

Let $\pi_{\ell_{p'}}:V_{5}\dashrightarrow Q$, where $Q\subset\mathbb{P}(H^{0}(V_{5},\mathcal{O}_{\mathbb{P}(W)}(1)|_{V_{5}}\otimes\mathcal{I}_{\ell_{p'}}))=\mathbb{P}^{4}$
is a smooth quadric threefold, be the birational map defined by the
linear system of hyperplane sections of $V_{5}$ containing $\ell_{p'}$,
see \cite{Fuj81}. Since $\ell_{p'}$ is $T$-stable, $\pi_{\ell_{p'}}$
is $T$-equivariant for the faithful $T$-action on $Q$ induced by
the $T$-module structure on $H^{0}(V_{5},\mathcal{I}_{\ell_{p'}}\otimes\mathcal{O}_{\mathbb{P}(W)}(1)|_{V_{5}})$,
and the composition $\xi:=\pi_{\ell_{p'}}\circ\psi_{Z}:X\dashrightarrow Q$
is then $T$-equivariant as well. The line $\ell_{q,q'}$ is contracted
by $\pi_{\ell_{p'}}$ onto a $T$-fixed point $r'\in Q$. This point
is contained in the image $\Gamma^{Q}$ of $\Gamma$, which is a $T$-stable
rational normal quartic curve in $Q$ having $r:=\pi_{\ell_{p'}}(p)$
as its second $T$-fixed point. The image $\ell_{p}^{Q}$ of $\ell_{p}$
is the tangent line to $\Gamma^{Q}$ at $r$. The proper transforms
$E_{Z}^{Q}$ and $E_{Z'}^{Q}$ of $E_{Z}$ and $E_{Z'}$ on $Q$ are
respectively equal to a $T$-stable quadric section of $Q$ containing
$\Gamma^{Q}$ and singular along $\ell_{p}^{Q}$ and the tangent hyperplane
section to $Q$ at the point $r'$. Moreover, the image $C_{q,q'}^{Q}$
of $C_{q,q'}$ is a $T$-stable rational normal cubic curve, equal
to the image of $F_{\ell_{p'}}$ by $\pi_{\ell_{p'}}$, contained
in $E_{Z'}^{Q}\cap E_{Z}^{Q}$ and intersecting $\Gamma^{Q}$ with
multiplicity $3$ at $r'$. On the other hand, the proper transform
$S_{Z'}^{Q}$ of $S_{Z'}$ is the $T$-stable cubic section of $Q$
singular along $\Gamma^{Q}$. By Lemma \ref{prop:The-crucial-involution},
there exists a birational involution $\iota_{Q}:Q\dashrightarrow Q$
semi-commuting with the action of $T$ on $Q$, contracting $S_{Z'}^{Q}$
onto $\Gamma^{Q}$ and mapping $C_{q,q'}^{Q}$ isomorphically onto
itself and such that $(\iota_{Q}^{-1})_{*}E_{Z'}^{Q}=E_{Z}^{Q}$.
The conjugate $\theta:=\xi^{-1}\circ\iota_{Q}\circ\xi:X\dashrightarrow X$
is a birational involution of $X$ mapping $Z$ onto $Z'$, $S_{Z}$
onto $S_{Z'}$ and whose unique possibly contracted divisor is the
hyperplane section $R_{\{Z,Z'\}}=(\psi_{Z}^{-1})_{*}F_{\ell_{p'}}$.
But since $\iota_{Q}$ maps $C_{q,q'}^{Q}$ isomorphically onto itself
by Lemma \ref{prop:The-crucial-involution} 2) and 
\[
\xi_{*}R_{\{Z,Z'\}}=(\pi_{\ell_{p'}})_{*}F_{\ell_{p'}}=C_{q,q'}^{Q}\subset E_{Z'}^{Q}\cap E_{Z}^{Q}=E_{Z'}^{Q}\cap(\iota_{Q}^{-1})_{*}E_{Z'}^{Q},
\]
it follows that $\theta$ does not contract any divisor, hence is
a biregular involution of $X$.\\

We can now conclude the description of the isomorphism types and automorphism
groups of the threefolds $X$. 

$\quad$ - If $\Gamma=\Gamma^{T}(v)\neq\Gamma_{p},$ then $\mathrm{Aut}^{0}(X)=\mathrm{Aut}(X,Z)=\mathrm{Aut}(X,Z')=T$
is a subgroup of index $2$ of $\mathrm{Aut}(X)=\mathrm{Aut}(X,Z\cup Z')$
whose quotient is generated by the image of the involution $\theta$
exchanging $Z$ and $Z'$. Since this involution semi-commutes with
the action of $\theta$, we have $\mathrm{Aut}(X)\cong T\rtimes\langle\theta\rangle\cong\mathbb{G}_{m}\rtimes\mu_{2}$.
The existence of $\theta$ implies that the pairs $(X,Z)$ and $(X,Z')$
are isomorphic, whence by Proposition \ref{prop:T-stable-quintics}
and Corollary \ref{cor:T-stable-quintics}, that the threefolds $X_{22}^{m}(v)$,
$v\in\mathbb{P}^{1}\setminus\{0,1,\infty,-4\}$ are pairwise non isomorphic. 

$\quad$ - If $\Gamma=\Gamma_{p}$ then $\mathrm{Aut}(X,Z)=G_{p}$
and $\mathrm{Aut}(X,Z')\subset\mathrm{Aut}(X)$ is the conjugate of
$\mathrm{Aut}(X,Z)$ by the involution $\theta$. Since the latter
normalizes $T$, $\mathrm{Aut}(X,Z)$ and $\theta$ generate a subgroup
of $\mathrm{Aut}(X)$ isomorphic to $\mathrm{PGL}_{2}$, from which
it follows, by Lemma \ref{prop:line}, that $\mathrm{Aut}^{0}(X)\cong\mathrm{PGL}_{2}$.
Finally, since an element of $\mathrm{Aut}(X)$ which stabilizes every
line in $X$ is contained in the intersection of all Borel subgroups
of $\mathrm{Aut}^{0}(X)$, it must be trivial, which implies that
the canonically induced action of $\mathrm{Aut}(X)$ on $(\mathcal{H}_{X})_{\mathrm{red}}\cong\mathbb{P}^{1}$
is faithful. Thus, $\mathrm{Aut}(X)$ identifies with a subgroup of
the automorphism group of $(\mathcal{H}_{X})_{\mathrm{red}}\cong\mathbb{P}^{1}$,
hence is connected, equal to $\mathrm{PGL}_{2}$. 
\end{proof}
In the proof of Theorem \ref{prop:Second-Isomorphism-Classification}
above, we have used the following auxiliary result on the birational
geometry of smooth quadric threefolds with an effective action of
a $1$-dimensional torus. 
\begin{lem}
\label{prop:The-crucial-involution}Let $\Gamma_{4}\subset\mathbb{P}^{4}$
be a rational normal quartic curve and let $T\subset\mathrm{Aut}(\mathbb{P}^{4},\Gamma_{4})\cong\mathrm{Aut}(\Gamma_{4})\cong\mathrm{PGL}_{2}$
be a maximal torus. Then the following hold:

1) For every smooth $T$-stable quadric $Q$ containing $\Gamma_{4}$,
the linear system of quadric sections of $Q$ containing $\Gamma_{4}$
determines a birational involution $\iota_{Q}:Q\dashrightarrow Q$
contracting the unique cubic section $S_{\Gamma_{4}}$ of $Q$ singular
along $\Gamma_{4}$ onto $\Gamma_{4}$ and restricting to an automorphism
of $Q\setminus S_{\Gamma_{4}}$ which semi-commutes with the induced
$T$-action on it.

2) Let $r,r'\in\Gamma_{4}$ be the two $T$-fixed points. Then the
proper transform $(\iota_{Q}^{-1})_{*}H_{r'}$ of the tangent hyperplane
section $H_{r'}$ to $Q$ at $r'$ is the unique integral quadric
section of $Q$ containing $\Gamma_{4}$ and singular along the tangent
line $L_{r}$ to $\Gamma_{4}$ at $r$. The scheme $H_{r'}\cap(\iota_{Q}^{-1})_{*}H_{r'}$
is the union of the line $L_{r',s}$ passing through $r'$ and the
unique $T$-fixed point $s\in L_{r}\setminus\{r\}$ and of a rational
normal cubic curve $C_{2r',s}$ with tangent line $L_{r'}$ at $r'$
and passing through $s$. Moreover, $\iota_{Q}$ maps $C_{2r',s}$
isomorphically onto itself.
\end{lem}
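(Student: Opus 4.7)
The plan is to establish part (1) via an explicit construction using the $2\times 2$ minors of the $3\times 3$ Hankel matrix associated to $\Gamma_{4}$, combined with a direct coordinate verification of involutivity, and to deduce part (2) via $T$-equivariance combined with explicit intersection analysis on $Q$.

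For part (1), first I would verify $\dim H^{0}(Q,\mathcal{I}_{\Gamma_{4}/Q}(2))=5$ from the exact sequence $0\to \mathcal{I}_{\Gamma_{4}/Q}(2)\to \mathcal{O}_{Q}(2)\to \mathcal{O}_{\Gamma_{4}}(8)\to 0$ together with $h^{0}(\mathcal{O}_{Q}(2))=14$ and $h^{0}(\mathcal{O}_{\Gamma_{4}}(8))=9$, so that $|\mathcal{I}_{\Gamma_{4}/Q}(2)|$ defines a rational map $\iota_{Q}\colon Q\dashrightarrow \mathbb{P}^{4}$. Fixing coordinates with $\Gamma_{4}$ the image of $[s:t]\mapsto [s^{4}:s^{3}t:s^{2}t^{2}:st^{3}:t^{4}]$ and $T$ acting with weights $(4,3,2,1,0)$, the linear system $H^{0}(\mathbb{P}^{4},\mathcal{I}_{\Gamma_{4}}(2))$ is generated by the six $2\times 2$ minors of the $3\times 3$ Hankel matrix $H(x)=(x_{i+j-2})_{1\leq i,j\leq 3}$; modulo the equation of $Q$ one linear combination of these minors vanishes, yielding a $T$-eigen-basis $y_{0},\dots,y_{4}$ of $H^{0}(Q,\mathcal{I}_{\Gamma_{4}/Q}(2))$ of weights $(6,5,4,3,2)$. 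Identifying the target $\mathbb{P}^{4}$ with the ambient $\mathbb{P}^{4}$ of $Q$ via the weight-reversing map $y_{i}\mapsto x_{4-i}$ realizes $\iota_{Q}$ as a birational self-map of $Q$, semi-commuting with $T$ because the reversal corresponds to the nontrivial element of $N_{T}/T\cong \mu_{2}$ in $\mathrm{Aut}(Q,\Gamma_{4})$ exchanging the $T$-fixed points $r,r'$ of $\Gamma_{4}$. Involutivity is then verified by direct computation: modulo the equation of $Q$, one checks $\iota_{Q}^{2}(x)=\det H(x)\cdot x$, where the cubic $\det H(x)$ restricts on $Q$ to the equation of $S_{\Gamma_{4}}=Q\cap\mathrm{Sec}(\Gamma_{4})$. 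This also identifies $S_{\Gamma_{4}}$ as the unique cubic section of $Q$ singular along $\Gamma_{4}$ (uniqueness since $h^{0}(\mathbb{P}^{4},\mathcal{I}_{\Gamma_{4}}^{2}(3))=1$, the generator being $\det H$) and confirms that $\iota_{Q}$ contracts $S_{\Gamma_{4}}$ onto $\Gamma_{4}$.

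For part (2), since $r'$ is $T$-fixed the tangent hyperplane $H_{r'}$ is $T$-stable, and by part (1) its proper transform $(\iota_{Q}^{-1})_{*}H_{r'}$ is a $T$-stable quadric section of $Q$ containing $\Gamma_{4}$. The semi-commuting $r\leftrightarrow r'$ converts the cone-vertex singularity of $H_{r'}$ at $r'$ into a singularity of $(\iota_{Q}^{-1})_{*}H_{r'}$ along the $\iota_{Q}$-image of $r'$, which by $T$-invariance is the whole tangent line $L_{r}=T_{r}\Gamma_{4}$; uniqueness of such a quadric section follows from a direct codimension count, imposing double vanishing along $L_{r}$ inside the 5-dimensional system $H^{0}(Q,\mathcal{I}_{\Gamma_{4}/Q}(2))$ and leaving a 1-dimensional subspace. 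The scheme-theoretic intersection $H_{r'}\cap(\iota_{Q}^{-1})_{*}H_{r'}$ is then a $T$-invariant 1-cycle of total degree $4$ on $Q$ whose $T$-fixed points lie among $\{r,r',s\}$, with $s$ the second $T$-fixed point of $L_{r}$. An explicit coordinate check shows the $T$-stable line $L_{r',s}$ lies on $Q$ and in both divisors; removing it leaves a residual $T$-invariant curve of degree $3$ passing through $s$ with tangent $L_{r'}$ at $r'$, which must be the claimed rational normal cubic $C_{2r',s}$. Stability of $C_{2r',s}$ under $\iota_{Q}$ follows by the $r\leftrightarrow r'$ symmetry: the analogous residual intersection obtained by swapping $r$ and $r'$ in the construction yields the same curve $C_{2r',s}$, and $\iota_{Q}$ manifestly exchanges the two constructions.

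The main obstacle will be the coordinate verification of involutivity $\iota_{Q}^{2}=\det H(x)\cdot \mathrm{id}$ on $Q$ and the identification of $\det H(x)|_{Q}$ with the equation of $S_{\Gamma_{4}}$ — both requiring attention to the specific representative of the one-parameter family of smooth $T$-stable quadrics $Q\supset \Gamma_{4}$ — together with the careful bookkeeping for part (2), particularly the verification that $L_{r',s}$ lies on $Q$ and that the residual intersection is an irreducible rational normal cubic rather than a reducible curve.
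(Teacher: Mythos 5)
Your part 1) follows essentially the same route as the paper: both fix the standard Hankel coordinates for $\Gamma_{4}$, take the six $2\times2$ minors $f_{2},f_{3},f_{4,0},f_{4,1},f_{5},f_{6}$ (with the single relation $af_{4,0}-bf_{4,1}$ cutting out $Q=Q_{[a:b]}$ modulo which the system becomes $5$-dimensional with weights $2,\dots,6$), and obtain $\iota_{Q}$ by post-composing with the weight-reversing involution $[w_{0}:\cdots:w_{4}]\mapsto[w_{4}:\cdots:w_{0}]$. The paper writes $\iota_{Q}=i_{Q}\circ j_{Q}$ and quotes \cite[Lemma 2.2]{KP22} for the contraction of $S_{\Gamma_{4}}$ onto $\Gamma_{4}$, where you instead invoke the identity $\iota_{Q}^{2}=\det H\cdot\mathrm{id}$; this is a legitimate variant, but be aware that the identification of the target with the source cannot be the naive $y_{i}\mapsto x_{4-i}$: the map must be normalized as $[f_{2}:cf_{3}:c^{2}f_{4,0}:cf_{5}:f_{6}]$ with $c^{2}=a/b$, and both the containment of the image in $Q$ and the involutivity depend on these constants. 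You flag this, so for part 1) the issue is only one of carrying out the computation.

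Part 2) has a genuine gap in its last step. The claim that $\iota_{Q}(C_{2r',s})=C_{2r',s}$ "follows by the $r\leftrightarrow r'$ symmetry" is wrong on both counts: the residual intersection obtained by swapping $r$ and $r'$ is the residual cubic of $H_{r}\cap(\iota_{Q}^{-1})_{*}H_{r}$, a curve contained in $H_{r}=\{w_{0}=0\}$, tangent to $L_{r}$ at $r$ and passing through the second $T$-fixed point $[0:1:0:0:0]$ of $L_{r'}$ --- this is \emph{not} $C_{2r',s}$, which lies in $H_{r'}=\{w_{4}=0\}$; and $\iota_{Q}$ does not exchange the two constructions (the element exchanging $r$ and $r'$ is the weight-reversal $i$, not $\iota_{Q}$; what $\iota_{Q}$ exchanges are the two divisors $H_{r'}$ and $(\iota_{Q}^{-1})_{*}H_{r'}$ \emph{within} one construction). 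The correct argument along your lines is: since $\iota_{Q}$ swaps these two divisors it permutes the two components $L_{r',s}$ and $C_{2r',s}$ of their intersection, and a one-line check (the image of one general point of $L_{r',s}$, or the paper's explicit identity $\iota_{Q}\circ\alpha=\alpha\circ\iota_{c}$ for the parametrization $\alpha$ of $C_{2r',s}$) shows the line is preserved, hence so is the cubic; the explicit identity is also what gives "isomorphically onto itself", which your argument does not address. A smaller soft spot of the same nature: asserting that $(\iota_{Q}^{-1})_{*}H_{r'}$ is singular along $L_{r}$ because "the $\iota_{Q}$-image of the vertex $r'$ is $L_{r}$" is not a proof, since $r'\in\Gamma_{4}$ lies in the indeterminacy locus; here the explicit formula $(\iota_{Q}^{-1})_{*}H_{r'}=\{w_{0}w_{2}-w_{1}^{2}=0\}\cap Q$, visibly singular along $\{w_{0}=w_{1}=w_{2}=0\}=L_{r}\subset Q$, settles the point at once, as in the paper.
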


\begin{proof}
For any smooth quadric $Q$ containing $\Gamma_{4}$, the linear system
of quadric sections of $Q$ containing $\Gamma_{4}$ defines a birational
map $Q\dashrightarrow Q'$ onto a smooth quadric threefold $Q'\subset\mathbb{P}^{4}$
which contracts the unique cubic section $S_{\Gamma_{4}}$ of $Q$
singular along $\Gamma_{4}$ onto a rational normal quartic curve
$\Gamma_{4}'\subset Q'$ and restricts to an isomorphism $Q\setminus S_{\Gamma_{4}}\to Q'\setminus S_{\Gamma_{4}'}$,
see e.g \cite[Lemma 2.2]{KP22}. The fact that for every smooth $T$-stable
quadric $Q$ containing $\Gamma_{4}$, the pairs $(Q,\Gamma_{4})$
and $(Q',\Gamma_{4}')$ are isomorphic follows from \cite[$\S$ 5.9]{Calabi23},
\cite[$\S$ 9]{CPS19}. We now argue that we can choose explicit coordinates
on $\mathbb{P}^{4}$ so to obtain from these existing description
a birational involution $\iota_{Q}:Q\dashrightarrow Q$ with the desired
properties. 

We can assume without loss of generality that $\Gamma_{4}$ is the
image of the closed immersion $\mathbb{P}_{[t_{0}:t_{1}]}^{1}\to\mathbb{P}^{4}$,
$[t_{0}:t_{1}]\mapsto[t_{1}^{4}:t_{1}^{3}t_{0}:t_{1}^{2}t_{0}^{2}:t_{1}t_{0}^{3}:t_{0}^{4}]$
and that $T=\mathrm{Spec}(k[\lambda^{\pm1}])$ acts on $\mathbb{P}_{[t_{0}:t_{1}]}^{1}$
by $\lambda\cdot[t_{0}:t_{1}]=[\lambda t_{0}:t_{1}]$. Then $H^{0}(\mathbb{P}^{4},\mathcal{I}_{\Gamma_{4}}\otimes\mathcal{O}_{\mathbb{P}^{4}}(2))$,
where $\mathcal{I}_{\Gamma_{4}}\subset\mathcal{O}_{\mathbb{P}^{4}}$
is the ideal sheaf of $\Gamma_{4}$, is generated by the six polynomials
\[
f_{6}=w_{2}w_{4}-w_{3}^{2},f_{5}=w_{1}w_{4}-w_{2}w_{3},f_{4,0}=w_{0}w_{4}-w_{2}^{2},f_{4,1}=w_{1}w_{3}-w_{2}^{2},f_{3}=w_{0}w_{3}-w_{1}w_{2},f_{2}=w_{0}w_{2}-w_{1}^{2}
\]
and the $T$-stable smooth quadrics containing $\Gamma_{4}$ are those
$Q_{[a:b]}$ defined by the polynomials 
\[
f_{[a:b]}=af_{4,0}-bf_{4,1}=a(w_{0}w_{4}-w_{2}^{2})-b(w_{1}w_{3}-w_{2}^{2}),\quad[a:b]\in\mathbb{P}^{1}\setminus\{0,1,\infty\}.
\]
Given such a quadric $Q=Q_{[a:b]}$, choosing $c$ so that $c^{2}=\tfrac{a}{b}$,
it is straightforward to verify that the map 
\begin{equation}
\psi:Q\dashrightarrow\mathbb{P}^{4},\,[w_{0}:w_{1}:w_{2}:w_{3}:w_{4}]\mapsto[f_{2}:cf_{3}:c^{2}f_{4,0}:cf_{5}:f_{6}]\label{eq:The-involution}
\end{equation}
 is $T$-equivariant, has image contained in $Q$ and that the induced
birational map $j_{Q}:Q\dashrightarrow Q$ is an involution. On the
other hand, the pair $(Q,\Gamma_{4})$ is stable under the involution
$i$ of $\mathbb{P}^{4}$ defined by $[w_{0}:w_{1}:w_{2}:w_{3}:w_{4}]\mapsto[w_{4}:w_{3}:w_{2}:w_{1}:w_{0}]$
which semi-commutes with the action of $T$. The induced involution
$i_{Q}=i|_{Q}$ commutes with $j_{Q}$, and $\iota_{Q}:=i_{Q}\circ j_{Q}=j_{Q}\circ i_{Q}$
is the desired involution. 

To prove assertion 2), we can assume that $r=[0:0:0:0:1]$ and $r'=[1:0:0:0:0]$
so that $H_{r'}=\{w_{4}=0\}\cap Q$, $L_{r'}=\{w_{2}=w_{3}=w_{4}=0\}$
and $L_{r}=\{w_{0}=w_{1}=w_{2}=0\}$. Then $(\iota_{Q}^{-1})_{*}H_{r'}=\{w_{0}w_{2}-w_{1}^{2}=0\}\cap Q$
is an anti-canonically embedded del Pezzo surface of degree $4$ singular
along $L_{r}$ and $H_{r'}\cap(\iota_{Q}^{-1})_{*}H_{r'}=\{w_{4}=w_{0}w_{2}-w_{1}^{2}=0\}\cap Q$
is the union of the line $L_{r',s}=\{w_{1}=w_{2}=w_{4}=0\}$ where
$s=[0:0:0:1:0]\in L_{r}\setminus\{r\}$ and of the rational normal
cubic curve $C_{2r',s}$ with tangent line $L_{r'}$ at $r'$ passing
through $s$ given by the image of the morphism 
\[
\alpha:\mathbb{P}_{[u_{0}:u_{1}]}^{1}\to\mathbb{P}^{4},\,[u_{0}:u_{1}]\mapsto[u_{0}^{3}:u_{0}^{2}u_{1}:u_{0}u_{1}^{2}:(1-c^{2})u_{1}^{3}:0].
\]
Now it is routine to verify from (\ref{eq:The-involution}) that $\iota_{Q}\circ\alpha=\alpha\circ\iota_{c}$,
where $\iota_{c}$ is the involution $[u_{0}:u_{1}]\mapsto[u_{1}:\frac{c}{1-c^{2}}u_{0}]$
of $\mathbb{P}^{1}$. 
\end{proof}
\begin{rem}
\label{rem:Link-remark}In the proof of Theorem \ref{prop:Second-Isomorphism-Classification},
we constructed for a threefold $X=X_{22}^{m}(v)$, $v\in\mathbb{P}^{1}\setminus\{0,1,\infty\}$,
where $X_{22}^{m}(-4)=X_{22}^{MU}$, and a pair $(Z,Z')$ of $\mathbb{G}_{m}$-stable
lines of special type in $X$, a biregular involution $\theta$ of
$X$ semi-commuting with the $\mathbb{G}_{m}$-action and exchanging
the lines $Z$ and $Z'$. Let us briefly indicate a complementary
interpretation of the construction of this involution $\theta$. With
the notation of the proof of Theorem \ref{prop:Second-Isomorphism-Classification},
the composition of the double projection $\psi_{Z}:X\dashrightarrow V_{5}$
from $Z$ with the projection $\pi_{(\psi_{Z})_{*}Z'}:V_{5}\dashrightarrow Q$
from the line $(\psi_{Z})_{*}Z'$ is a $\mathbb{G}_{m}$-equivariant
birational map which contracts the unique hyperplane section $S_{Z}$
of $X$ of multiplicity $3$ along $Z$ onto a rational normal quartic
curve $\Gamma^{Q}$, contracts the unique hyperplane section $R_{\{Z,Z'\}}$
of $X$ singular along $Z\cup Z'$ onto a rational normal cubic curve
$C$ and maps the the unique hyperplane section $S_{Z'}$ of $X$
singular along $Z'$ onto a cubic section $S_{Z'}^{Q}$ of $Q$ singular
along $\Gamma^{Q}$. Letting $\alpha_{\Gamma^{Q}}:\hat{Q}\to Q$ be
the blow-up of $\Gamma^{Q}$, where $\hat{Q}\subset\mathbb{P}^{16}$
is a Fano threefold of Picard rank $2$ and degree $28$ of type 2.21
in \cite{MoMu81}, we obtain an induced $\mathbb{G}_{m}$-equivariant
birational map 
\[
\hat{\gamma}=\alpha_{(\pi_{(\psi_{Z})_{*}Z'}\circ\psi_{Z})_{*}S_{Z}}^{-1}\circ\pi_{(\psi_{Z})_{*}Z'}\circ\psi_{Z}:X\dashrightarrow\hat{Q}
\]
which fits into the following commutative diagram of $\mathbb{G}_{m}$-equivariant
birational maps
\[
\xymatrix{ &  & R'_{\{Z,Z'\}}\subseteq Y\ar@{.>}[r]^{\hat{\chi}}\ar@{->}[lld]_{\hat{\sigma}} & Y^{+}\supseteq R^{+}{}_{\{Z,Z'\}}\ar@{->}[rrd]^{\hat{\tau}}\\
Z\cup Z'\subset X\ar@{-->}[rrrrr]^{\hat{\gamma}} &  &  &  &  & \hat{Q}\supset\hat{C}%
\\
}
\]
where $\hat{\sigma}:Y\to X$ is the blow-up of $Z\cup Z'$, $\hat{\chi}:Y\dashrightarrow Y^{+}$
is a small $\mathbb{Q}$-factorial modification and $\hat{\tau}:Y^{+}\to\hat{Q}$
is the contraction of the proper transform $R^{+}{}_{\{Z,Z'\}}$ of
$R_{\{Z,Z'\}}$ onto a smooth curve $\hat{C}\subset\hat{S}$ of bi-degree
$(3,3)$. One can further check from the construction that $\hat{\gamma}$
is given by the linear system of quadric sections of $X$ having multiplicity
$3$ along $Z$ and $Z'$. The birational involution $\iota_{Q}:Q\dashrightarrow Q$
in the proof of Theorem \ref{prop:Second-Isomorphism-Classification}
lifts to a biregular involution $\iota_{\hat{Q}}$ of $\hat{Q}$ which
normalizes the lifted $\mathbb{G}_{m}$-action and makes $\hat{Q}$
into a Fano threefold of $(\mathbb{G}_{m}\rtimes\mu_{2})$-Picard
rank $1$, and by the construction of the associated biregular involution
$\theta$ of $X$, the above diagram is then a $(\mathbb{G}_{m}\rtimes\mu_{2})$-equivariant
Sarkisov link. We refer the reader to the forthcoming article \cite{DFK-next}
for a for a more in-depth and detailed study of these birational links
between smooth prime Fano threefolds of degree 22 and blow-ups of
smooth quadric threefolds along rational normal quartic curves. 
\end{rem}

\bibliographystyle{amsplain}

\end{document}